\newtheorem{theorem}{Theorem}[section]
\newtheorem{proposition}[theorem]{Proposition}
\newtheorem{lemma}[theorem]{Lemma}
\newtheorem{remark}{Remark}[section]
\theoremstyle{definition}
\newtheorem{definition}[theorem]{Definition}
\numberwithin{equation}{section}
\newcommand{\p}{\partial}
\newcommand{\e}{\epsilon}
\newcommand{\wt}{\widetilde}
\newcommand{\R}{\mathbb{R}}
\newcommand{\Z}{\mathbb{Z}}
\newcommand{\al}{\alpha}
\newcommand{\f}{\frac}
\keywords{Muskat problem, Elasticity, Well-posedness}
\subjclass[2020]{35R35, 35Q35, 35A01, 35A02}
\author{Lizhe Wan}
\address{Beijing International Center for Mathematical Research, Peking University}
\curraddr{}
\email{wanlizhe@pku.edu.cn}
\author{Jiaqi Yang}
\address{School of Mathematics and Statistics, Northwestern Polytechnical University}
\curraddr{}
\email{yjqmath@nwpu.edu.cn, yjqmath@163.com}
\begin{document}

\title{On the well-posedness of two-dimensional Muskat problem with an elastic interface}

\begin{abstract}
We investigate the two-dimensional Muskat problem with a nonlinear elastic interface, for both one-phase and two-phase scenarios.
Following the framework developed by Nguyen \cite{MR4131404, MR4348695}, we demonstrate that the problem is locally well-posed in $H^s$ for $s\geq 2$ for arbitrary initial data.
Furthermore, for the one-phase case and the stable two-phase case $(\rho^+ \leq \rho^-)$, we establish global well-posedness for small initial data in $H^s$ when $s> \f32$.
\end{abstract}

\maketitle
\tableofcontents

\section{Introduction} \label{s:Intro}
Poroelasticity theory, which rigorously couples solid deformation with fluid flow in porous materials, has evolved from a specialized soil consolidation model into a foundational theoretical framework for solving critical challenges across engineering, earth sciences, and biomechanics, see Cheng \cite{cheng2016poroelasticity}. 
Its significance lies in its ability to quantify the bidirectional interaction whereby mechanical loads induce fluid pressure changes and drive flow, while fluid injection or extraction, in turn, deforms the solid matrix. 

This paper studies the coupling between an elastic sheet and a porous medium, a model referred to as the \textbf{Muskat problem with an elastic interface.}
For related models such as hydroelastic waves, the Peskin problem, and the Muskat problem for viscoelastic filtration, one can refer to  Cameron and Strain \cite{MR4673875}, Gahn \cite{MR4959949}, Meirmanov \cite{MR2863467}, and Plotnikov and Toland \cite{MR2812947} for these results.
Our model integrates Darcy’s law for fluids with the Cosserat shell theory (under Kirchhoff’s hypotheses) to describe the elastic sheet. 

A closely related class of models concerns hydroelastic waves, which describes the interaction between elastic structures and hydrodynamic forcing; see \cite{MR2812947}. 
In this setting, the fluid is governed by the incompressible Euler equations.
For developments in numerical simulation, experimental studies, and applications of hydroelastic waves, we refer to Părău \textit{et al.} \cite{MR2812939}. 
The local well-posedness of two-dimensional hydroelastic waves was established by Ambrose and Siegel \cite{MR3656704} and by Liu and Ambrose \cite{MR3608168}. 
Results on local well-posedness for hydroelastic waves with vorticity in arbitrary spatial dimensions can be found in the work of the second author and Wang \cite{MR4104949}. 
Recently, authors obtained a low-regularity well-posedness result for two-dimensional hydroelastic waves in \cite{WanY}.

Let us denote the interface between the fluids (or the fluid and the air) at time $t$ by $\Sigma_t$.
In this paper, we assume throughout that $\Sigma_t$ can be represented by the graph of a time-dependent function $\eta(t,x)$, so that
\begin{equation*}
    \Sigma_t = \{ (x, \eta(t,x)): x\in \R \}.
\end{equation*}
Along the interface $\Sigma_t$, a thin layer of elastic sheet separates the fluid domain into the upper and lower region, denoted by $\Omega^+_t$, $\Omega_t^-$ respectively.
They are given by
\begin{align*}
& \Omega_t^+ = \{ (x, y)\in \R^2: \eta(t,x)<y< \underline{b}^+(x) \}, \\
& \Omega_t^- = \{ (x, y)\in \R^2: \underline{b}^-(x)<y< \eta(t,x) \},
\end{align*}
where $\underline{b}^+(x)$ and $\underline{b}^-(x)$ are parameterizations of the upper and the lower part of the fluid boundary respectively:
\begin{equation*}
    \Gamma^\pm = \{ (x, \underline{b}^\pm(x)): x\in \R \}.
\end{equation*}
\begin{figure}[htbp] 
    \centering 
\includegraphics[width=0.7\textwidth]{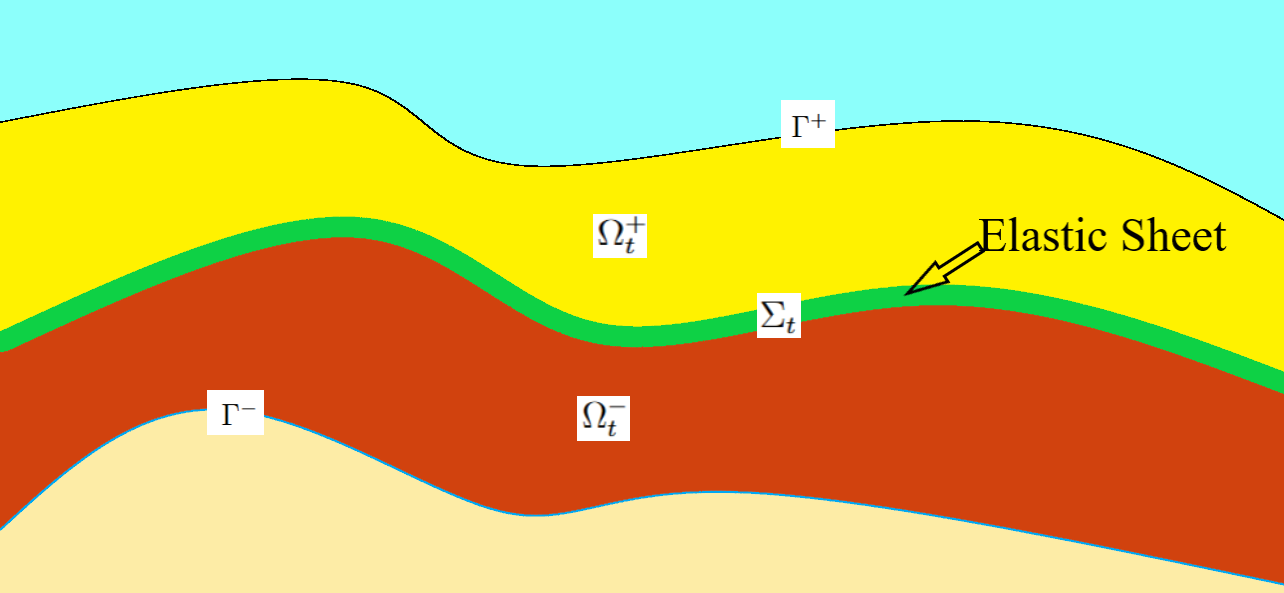}
\caption{Two-phase Muskat problem with an elastic interface} 
    \label{fig}
\end{figure}

The velocities $u^\pm$ and pressure $p^\pm$ of incompressible fluid in each region is governed by Darcy's law:
\begin{equation} \label{Darcy}
\mu^\pm u^\pm + \nabla_{x,y}p^{\pm} = -\rho^{\pm}g \textbf{e}_y, \quad \nabla_{x,y}\cdot u^\pm = 0 \quad \text{in } \Omega^\pm_t, 
\end{equation}
where $g \geq 0$ denotes the gravitational acceleration, $\rho^\pm$ are the densities of the fluids in $\Omega^\pm$, and $\mu^\pm$ are the corresponding viscosity coefficients.

Along the interface $\Sigma_t$, we assume that the normal component of the velocity is continuous, namely,
\begin{equation} \label{NormalVelocity}
    u^+ \cdot n = u^- \cdot n \quad \text{on } \Sigma_t, 
\end{equation}
where $n = \frac{1}{\sqrt{1+\eta_x^2}}(-\eta_x, 1)$ denotes the upward-pointing unit normal vector to $\Sigma_t$.
Then the kinematic boundary condition on $\Sigma_t$ takes the form
\begin{equation} \label{kinematic}
    \p_t \eta = \sqrt{1+\eta_x^2} u^- \cdot n\big|_{\Sigma_t}.
\end{equation}
The dynamic boundary condition on $\Sigma_t$ asserts that the pressure jump across $\Sigma_t$ is balanced by the restoring force generated by the elastic sheet.
\begin{equation} \label{dynamic}
   p^- - p^+ = \sigma \mathbf{E}(\eta) \quad \text{on } \Sigma_t, 
\end{equation}
where $\sigma>0$ is the coefficient of flexural rigidity.
The nonlinear elastic operator $\mathbf{E}(\eta)$, arising from the bending energy of the sheet, is given by
\begin{equation} \label{Elastic}
\begin{aligned}
    \mathbf{E}(\eta) :=&\frac{1}{\sqrt{1+\eta_x^2}} 
    \left[ \frac{1}{\sqrt{1+\eta_x^2}} 
    \left( \frac{\eta_{xx}}{(1+\eta_x^2)^{3/2}} \right)_x \right]_x
    + \frac{1}{2} \left( \frac{\eta_{xx}}{(1+\eta_x^2)^{3/2}} \right)^3\\
    =&\left( \dfrac{1}{1+\eta_x^2}\left(\dfrac{\eta_x}{\sqrt{1+\eta_x^2}}\right)_x\right)_{xx}+\f52\left(\f{\eta_x\eta_{xx}^2}{(1+\eta^2_x)^{\f72}}\right)_x.
\end{aligned}
\end{equation}

On the rigid upper and lower boundary, we assume the non-penetration condition,
\begin{equation} \label{NoPenetration}
    u^\pm \cdot (-\underline{b}^\pm_x, 1) = 0, \quad \text{on } \Gamma^\pm.
\end{equation}

The system \eqref{Darcy}-\eqref{NoPenetration} is referred to as the  \textbf{two-phase} Muskat problem with an elastic interface.
When the upper phase corresponds to air or vacuum, so that $\mu^+ = \rho^+ = p^+=0$, the problem reduces to the \textbf{one-phase} Muskat problem.

\subsection{Formulation of the Muskat problem}
The Muskat problem admits several equivalent formulations; see \cite{MR4520423,MR3415681,MR3171344,MR2313156,MR2993754,MR3608884,MR3071395}. 
In the absence of surface tension, C\'ordoba \& Gancedo \cite{MR2318314} introduced a contour dynamics approach for the infinite-depth Muskat problem without viscosity jump, and proved local well-posedness in $H^3$ for graph interfaces. 
This approach was later extended by C\'ordoba, C\'ordoba \& Gancedo \cite{MR2753607,MR3071395} to include a viscosity jump and to non-graph interfaces satisfying the arc-chord and Rayleigh–Taylor conditions.
Using an arbitrary Lagrangian–Eulerian method, Cheng, Granero \& Shkoller \cite{MR3415681} established local well-posedness for the one-phase problem with a flat bottom, assuming initial surfaces $\eta \in H^2$.
Matioc \cite{MR3861893} later refined the regularity requirement to $\eta \in H^s$ with $s > \frac32$ for the constant-viscosity, infinite-depth case. 
Alazard \& Lazar \cite{MR4097324} provided an alternative proof via paralinearization of the contour dynamics formulation.
Nguyen \& Pausader \cite{MR4090462} reformulated the problem using the Dirichlet–Neumann operator and proved local well-posedness for large data in the critical Sobolev spaces. 
For results on the Muskat problem with surface tension, we refer to \cite{MR4131404,MR2162781,MR3841857}, and references therein.
Concerning global well-posedness, relevant works include \cite{MR3869383,MR3415681,MR4363243,MR3899970,MR3595492,MR2998834,Lazar,MR4348695,MR4655356}.

In this paper, we follow the formulation in Nguyen \cite{MR4131404} and Nguyen \& Pausader \cite{MR4090462}, and rewrite the Muskat problem in terms of the Dirichlet-Neumann operators $G^\pm(\eta)$ associated to $\Omega^\pm_t$. 
For a fixed time $t$, and a given function $f$ defined on $\Sigma$, we let $\phi^\pm$ be the solution of 
\begin{equation*}
\begin{cases}
 \Delta_{x,y} \phi^\pm = 0 \quad \text{in }\Omega^\pm, \\
\phi^\pm = f \quad \text{on }\Sigma, \\
\frac{\p \phi^\pm}{\p\nu^\pm} = 0 \quad \text{on }\Gamma^\pm.
\end{cases}
\end{equation*}
The Dirichlet-Neumann operators $G^\pm(\eta)$ is given by
\begin{equation*}
 G^\pm(\eta) f : = \sqrt{1+\eta_x^2} \frac{\p \phi^\pm}{\p n}.  
\end{equation*}

Then the two-dimensional Muskat problem with an elastic interface can be reformulated using the following result.

\begin{proposition}[\hspace{1sp}\cite{MR4131404}] \label{t:Formuation}
$(i)$ If $(u,p, \eta)$ is a solution to the one-phase Muskat problem, then $\eta$ solves the differential equation
\begin{equation} \label{OneMuskat}
    \p_t \eta = -\frac{1}{\mu^-}G^-(\eta)(\sigma \mathbf{E}(\eta)+ \rho^- g \eta).
\end{equation}
On the other hand, if $\eta$ is a solution of the differential equation \eqref{OneMuskat}, then the one-phase Muskat problem has a solution, in which $\eta$ parameterizes the free surface $\Sigma_t$.

$(ii)$ If $(u^\pm, p^\pm, \eta)$ solve the two-phase Muskat problem if and only if
\begin{equation} \label{TwoMuskatOne}
    \p_t \eta = -\frac{1}{\mu^-}G^-(\eta)f^-,
\end{equation}
where $f^\pm := p^\pm|_{\Sigma_t} + \rho^\pm \eta$ satisfy 
\begin{equation}  \label{TwoMuskatTwo}
\begin{cases}
f^- - f^+ = \sigma \mathbf{E}(\eta) + g(\rho^- - \rho^+)\eta, \\
\frac{1}{\mu^+}G^+(\eta)f^+ = \frac{1}{\mu^-}G^-(\eta)f^-.
\end{cases}
\end{equation}
On the other hand, if $\eta$ is a solution of \eqref{TwoMuskatOne} where $f^\pm$ solve \eqref{TwoMuskatTwo}, then the two-phase Muskat problem has a solution, in which $\eta$ parameterizes the free surface $\Sigma_t$.
\end{proposition}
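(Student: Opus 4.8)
The plan is to pass between the Darcy formulation \eqref{Darcy}--\eqref{NoPenetration} and the Dirichlet--Neumann formulation \eqref{TwoMuskatOne}--\eqref{TwoMuskatTwo} by introducing the shifted pressures $q^\pm := p^\pm + \rho^\pm g y$ on $\Omega^\pm_t$, which turn Darcy's law into the statement that $u^\pm$ is, up to a constant, the gradient of a harmonic function. I will carry out the two-phase case $(ii)$ in full; part $(i)$ is the degenerate specialization $\mu^+ = \rho^+ = p^+ = 0$, for which $f^+ \equiv 0$ (the solution of the mixed problem with zero Dirichlet data), the condition \eqref{NormalVelocity} and the second line of \eqref{TwoMuskatTwo} are simply absent, the first line of \eqref{TwoMuskatTwo} reduces to $f^- = \sigma\mathbf E(\eta) + \rho^- g\eta$, and then \eqref{TwoMuskatOne} is exactly \eqref{OneMuskat}. (Throughout I read the definition of $f^\pm$ with the gravitational factor restored, $f^\pm = p^\pm|_{\Sigma_t} + \rho^\pm g\eta$, which is what makes \eqref{dynamic} and the first line of \eqref{TwoMuskatTwo} consistent.)

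\emph{From Muskat to the Dirichlet--Neumann formulation.} Assume $(u^\pm, p^\pm, \eta)$ solves \eqref{Darcy}--\eqref{NoPenetration}. By \eqref{Darcy} we have $u^\pm = -\tfrac{1}{\mu^\pm}\nabla_{x,y}q^\pm$, so $\nabla_{x,y}\cdot u^\pm = 0$ forces $\Delta_{x,y}q^\pm = 0$ in $\Omega^\pm_t$; since $(-\underline b^\pm_x, 1)$ is normal to $\Gamma^\pm$, the non-penetration condition \eqref{NoPenetration} reads $\partial q^\pm/\partial\nu^\pm = 0$ on $\Gamma^\pm$. Thus $q^\pm$ solves exactly the mixed boundary value problem defining $\phi^\pm$ with Dirichlet data $q^\pm|_{\Sigma_t} = f^\pm$, so $q^\pm = \phi^\pm$ by uniqueness and
\[
  G^\pm(\eta)f^\pm = \sqrt{1+\eta_x^2}\,\frac{\partial\phi^\pm}{\partial n} = \sqrt{1+\eta_x^2}\,\frac{\partial q^\pm}{\partial n} = -\mu^\pm\sqrt{1+\eta_x^2}\,u^\pm\cdot n\big|_{\Sigma_t}.
\]
Now \eqref{NormalVelocity} yields $\tfrac{1}{\mu^+}G^+(\eta)f^+ = \tfrac{1}{\mu^-}G^-(\eta)f^-$; subtracting $p^\pm|_{\Sigma_t} = f^\pm - \rho^\pm g\eta$ and invoking \eqref{dynamic} gives $f^- - f^+ = \sigma\mathbf E(\eta) + g(\rho^- - \rho^+)\eta$; and \eqref{kinematic} gives $\partial_t\eta = \sqrt{1+\eta_x^2}\,u^-\cdot n = -\tfrac{1}{\mu^-}G^-(\eta)f^-$. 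These are precisely \eqref{TwoMuskatOne}--\eqref{TwoMuskatTwo}.

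\emph{From the Dirichlet--Neumann formulation to Muskat.} Conversely, given $\eta$ solving \eqref{TwoMuskatOne} and $f^\pm$ solving \eqref{TwoMuskatTwo}, let $\phi^\pm$ be the harmonic extensions of $f^\pm$ with vanishing conormal derivative on $\Gamma^\pm$ that define $G^\pm(\eta)$, and set $u^\pm := -\tfrac{1}{\mu^\pm}\nabla_{x,y}\phi^\pm$ and $p^\pm := \phi^\pm - \rho^\pm g y$. Then $\mu^\pm u^\pm + \nabla_{x,y}p^\pm = -\rho^\pm g\mathbf e_y$ and $\nabla_{x,y}\cdot u^\pm = -\tfrac{1}{\mu^\pm}\Delta_{x,y}\phi^\pm = 0$, so \eqref{Darcy} holds; $\partial\phi^\pm/\partial\nu^\pm = 0$ on $\Gamma^\pm$ is \eqref{NoPenetration}; the second line of \eqref{TwoMuskatTwo} gives $u^+\cdot n = u^-\cdot n$ on $\Sigma_t$, i.e. \eqref{NormalVelocity}; using $\phi^\pm|_{\Sigma_t} = f^\pm$, the first line of \eqref{TwoMuskatTwo} gives $p^- - p^+ = (f^- - f^+) - g(\rho^- - \rho^+)\eta = \sigma\mathbf E(\eta)$ on $\Sigma_t$, i.e. \eqref{dynamic}; and $\sqrt{1+\eta_x^2}\,u^-\cdot n = -\tfrac{1}{\mu^-}G^-(\eta)f^- = \partial_t\eta$ by \eqref{TwoMuskatOne}, i.e. \eqref{kinematic}. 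This establishes the claimed equivalence.

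\emph{Main point of care.} Everything above is algebra once the analytic input is in place: for $\eta$ and $\underline b^\pm$ in the admissible class, the mixed Dirichlet--Neumann problem on the unbounded, strip-type domains $\Omega^\pm_t$ must be uniquely solvable in the relevant finite-energy class, so that $G^\pm(\eta)$ is well defined and the identification $q^\pm = \phi^\pm$ is legitimate. This is the only nontrivial ingredient, and it is exactly the construction and mapping properties of $G^\pm(\eta)$ established in \cite{MR4131404, MR4090462}, on which I would rely.
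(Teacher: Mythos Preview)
Your proof is correct and is precisely the standard argument: introduce the hydrostatic-shifted pressures $q^\pm = p^\pm + \rho^\pm g y$, identify them with the harmonic extensions $\phi^\pm$ defining $G^\pm(\eta)$, and read off each boundary condition. The paper itself does not write out a proof but simply cites Appendix~B of \cite{MR4131404} (for the surface-tension case) and remarks that one replaces the capillary term by the elastic term; your write-up is exactly that argument, including the correct observation that the definition of $f^\pm$ should carry the factor $g$.
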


Proposition \ref{t:Formuation} was proved in Appendix $B$ in \cite{MR4131404} for the Muskat problem with surface tension.
Replacing the capillary terms with elastic terms yields the corresponding result stated above.

The Muskat problem with an elastic interface is essentially a fifth-order quasilinear parabolic equation.
Indeed, at the leading order, one may intuitively approximate $\sigma \mathbf{E}(\eta) + \rho^- g\eta\approx \sigma  \Delta^2 \eta$, so that the one-phase Muskat problem \eqref{OneMuskat} takes the schematic form
\begin{equation} \label{ToyModel}
     \p_t \eta = -\frac{\sigma}{\mu^-}G^-(\eta)\Delta^2 \eta + \text{remainder terms}.
\end{equation}
In the bottomless case where $\Gamma^\pm = \emptyset$, it $\eta(t,x)$ solves \eqref{ToyModel}, so is 
\begin{equation*}
    \eta_\lambda (t,x) = \lambda^{-1}\eta(\lambda^5 t, \lambda x), \quad \forall \lambda >0.
\end{equation*}
Hence, the scale-invariant Sobolev space is $\dot{H}^\f32(\R)$.
This critical regularity coincides with that of the gravity Muskat problem and the Muskat problem with surface tension.
As in the case of water waves with surface tension \cite{MR2805065} and the Muskat problem with surface tension \cite{MR4131404}, the presence of rigid boundaries $\Gamma^\pm$ affects only the low-frequency behavior and does not alter the local well-posedness theory.

\subsection{Main results}
Before stating the main results of this paper, we first introduce the functional spaces used throughout the paper.
We define
\begin{align*}
    &\dot{W}^{1,\infty} : = \{u\in L^1_{loc}(\R) : \nabla u \in L^\infty(\R)  \}, \\
    & Z^s(T) : = C([0,T]; H^s(\R)) \cap L^2([0,T]; H^{s+\f52}(\R)).
\end{align*}

Our first main result is on the local well-posedness of two-dimensional Muskat problem for large data in $H^s$ for $s \geq 2$.
\begin{theorem} \label{t:MainOne}
Let $s\geq 2$, and let $\sigma>0$, $g\geq 0$ be fixed parameters.

$(i)$ \textbf{Local well-posedness for the one-phase problem}: Let $\mu^- >0$, and $\rho^->0$.
We consider either $\Gamma^- = \emptyset$ or $\underline{b}^- \in \dot{W}^{1,\infty}$.
Let $\eta_0 \in H^s(\R) ($\text{or} $H^s(\mathbb{T}))$   satisfy 
\begin{equation*}
    \text{dist}(\eta_0, \Gamma^-)>2h>0.
\end{equation*}
Then there exists a time $T>0$, depending only on $\|\eta_0 \|_{H^s}$, $h$, $s$, $\frac{\rho^- g}{\mu^-}$ and $\frac{\sigma}{\mu^-}$, and a unique solution $\eta \in Z^s(T)$ to the equation \eqref{OneMuskat} such that
\begin{equation*}
    \eta(0,x) = \eta_0, \quad \inf_{t\in[0,T]}\text{dist}(\eta(t), \Gamma^-)>h.
\end{equation*}
Moreover, if $\eta_1$ and $\eta_2$ are two solutions of \eqref{OneMuskat}, then 
\begin{equation} \label{StabilityOne}
    \|\eta_1 - \eta_2 \|_{Z^s(T)} \lesssim_{\|\eta_1\|_{Z^s(T)} + \|\eta_1\|_{Z^s(T)}} \|(\eta_1 - \eta_2)(0,\cdot) \|_{H^s}.
\end{equation}

$(ii)$ \textbf{Local well-posedness for the two-phase problem}: Let $\mu^\pm >0$, and $\rho^\pm>0$.
We consider either $\Gamma^\pm = \emptyset$ or $\underline{b}^\pm \in \dot{W}^{1,\infty}$.
Let $\eta_0 \in H^s(\R) ($\text{or} $H^s(\mathbb{T}))$ satisfy 
\begin{equation*}
    \text{dist}(\eta_0, \Gamma^\pm)>2h>0.
\end{equation*}
Then there exists a time $T>0$, depending only on $\|\eta_0 \|_{H^s}$, $h$, $s$, $\mu^\pm$, $\sigma$ and $g(\rho^- -\rho^+)$, and a unique solution $\eta \in Z^s(T)$ to  \eqref{TwoMuskatOne}-\eqref{TwoMuskatTwo} such that
\begin{equation*}
    \eta(0,x) = \eta_0, \quad \inf_{t\in[0,T]}\text{dist}(\eta(t), \Gamma^\pm)>h.
\end{equation*}
Moreover, if $\eta_1$ and $\eta_2$ are two solutions of \eqref{TwoMuskatOne}-\eqref{TwoMuskatTwo}, then 
\begin{equation} \label{StabilityTwo}
    \|\eta_1 - \eta_2 \|_{Z^s(T)} \lesssim_{\|\eta_1\|_{Z^s(T)} + \|\eta_2\|_{Z^s(T)}} \|(\eta_1 - \eta_2)(0,\cdot) \|_{H^s}.
\end{equation}
\end{theorem}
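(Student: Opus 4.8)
The plan is to treat \eqref{OneMuskat} and the reduced form of \eqref{TwoMuskatOne}--\eqref{TwoMuskatTwo} as quasilinear \emph{parabolic} equations of order five and run the paradifferential energy scheme of Nguyen \cite{MR4131404,MR4090462}. First I would record the structural properties of the operators under the standing hypotheses ($\mathrm{dist}(\eta,\Gamma^\pm)>h$, $\underline b^\pm\in\dot W^{1,\infty}$ or $\Gamma^\pm=\emptyset$). The Dirichlet--Neumann operators $G^\pm(\eta)$ are bounded, self-adjoint and nonnegative, depend on $\eta$ in a tame, locally Lipschitz fashion, and admit a paradifferential expansion $G^\pm(\eta)f=T_{\lambda^\pm}f+R^\pm f$, where in one space dimension the principal symbol is simply $\lambda^\pm(x,\xi)=|\xi|$ (the geometry and the bottom enter only at order $0$ and below) and $R^\pm$ is smoothing. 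From the explicit formula \eqref{Elastic}, $\mathbf E(\eta)=\p_x^2\big(\eta_{xx}(1+\eta_x^2)^{-5/2}\big)+\tfrac52\p_x\big(\eta_x\eta_{xx}^2(1+\eta_x^2)^{-7/2}\big)$, so its Fr\'echet derivative $\mathbf E'(\eta)$ is a self-adjoint fourth-order operator with $\mathbf E'(\eta)v=T_e v+(\text{lower order})$, $e(x,\xi)=\xi^4(1+\eta_x^2)^{-5/2}>0$. Hence the linearization of the right-hand side of \eqref{OneMuskat} about $\eta$ is, to leading order, $-\tfrac{\sigma}{\mu^-}T_{\lambda^-e}$, with real, positive principal symbol $\tfrac{\sigma}{\mu^-}|\xi|^5(1+\eta_x^2)^{-5/2}$: a genuinely parabolic operator of order five. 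For the two-phase problem one first solves the transmission system \eqref{TwoMuskatTwo}; since $\tfrac1{\mu^+}G^+(\eta)+\tfrac1{\mu^-}G^-(\eta)$ is positive and invertible on the relevant (homogeneous) Sobolev scale, we get $f^-=B(\eta)\big(\sigma\mathbf E(\eta)+g(\rho^--\rho^+)\eta\big)$ with $B(\eta)$ of order $-1$, and substitution into \eqref{TwoMuskatOne} yields a scalar equation $\p_t\eta=-\mathcal N(\eta)$ of the same fifth-order parabolic type, with leading symbol $\tfrac{\sigma}{\mu^-+\mu^+}|\xi|^5(1+\eta_x^2)^{-5/2}$. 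Thus $(i)$ and $(ii)$ are handled in parallel after this reduction, and the rest of the plan is stated for the one-phase equation.

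Next comes the a priori estimate. For a smooth solution on $[0,T]$ I would apply $\langle D\rangle^s$ to the equation and pair with $\langle D\rangle^s\eta$ in $L^2$. Commuting $\langle D\rangle^s$ through $G^-(\eta)$ and $\mathbf E(\eta)$ with paradifferential commutator estimates, and symmetrizing the (non-self-adjoint) composition $G^-(\eta)\mathbf E'(\eta)$ --- whose self-adjoint part has the same positive principal symbol --- G\aa{}rding's inequality produces
\begin{equation*}
\tfrac12\tfrac{d}{dt}\|\eta(t)\|_{H^s}^2+c\,\|\eta(t)\|_{\dot H^{s+\f52}}^2\le \mathcal F\big(\|\eta(t)\|_{H^s}\big),
\end{equation*}
with $c>0$ depending on $h$, $\tfrac{\sigma}{\mu^-}$ and $\|\eta\|_{W^{1,\infty}}$ (hence, for $s\ge2$, on $\|\eta\|_{H^s}$); a separate elementary estimate controls $\|\eta\|_{L^2}$, upgrading the coercive term to $c\|\eta\|_{H^{s+\f52}}^2$. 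Integrating in time gives, for $T$ small depending only on the quantities listed in the theorem, $\|\eta\|_{Z^s(T)}\le\Phi(\|\eta_0\|_{H^s})$ together with propagation of $\mathrm{dist}(\eta(t),\Gamma^\pm)>h$.

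For existence I would use parabolic regularization: solve $\p_t\eta^\e+\e|D|^6\eta^\e=-\tfrac1{\mu^-}G^-(\eta^\e)\big(\sigma\mathbf E(\eta^\e)+\rho^-g\eta^\e\big)$ (resp.\ the reduced two-phase equation) with $\eta^\e(0)=\eta_0$, which is locally solvable by the smoothing of the analytic semigroup $e^{-\e t|D|^6}$ and a contraction-mapping argument; the a priori bound above holds uniformly in $\e$ since the $\e$-term only helps, so all $\eta^\e$ exist on a common interval $[0,T]$, and Aubin--Lions compactness allows passage to the limit $\e\to0$, yielding $\eta\in Z^s(T)\cap C([0,T];H^s)$ solving the original equation. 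For uniqueness and the Lipschitz bounds \eqref{StabilityOne}--\eqref{StabilityTwo}: if $\eta_1,\eta_2\in Z^s(T)$ are two solutions, then $\delta=\eta_1-\eta_2$ satisfies a linear fifth-order parabolic equation $\p_t\delta+T_{a(\eta_1)}\delta=\mathcal R$, where $\mathcal R$ consists of terms that are either of order $\le4$ in $\delta$ or carry an explicit factor $\eta_1-\eta_2$ with coefficients controlled by $\|\eta_1\|_{Z^s(T)}+\|\eta_2\|_{Z^s(T)}$; the same energy computation together with Gr\"onwall's inequality gives $\|\delta\|_{Z^s(T)}\lesssim\|\delta(0)\|_{H^s}$, which is both uniqueness and \eqref{StabilityOne}--\eqref{StabilityTwo}.

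The main obstacle I anticipate is the precision of the energy estimate. Because the equation is fifth order and quasilinear, the paralinearizations of $G^\pm(\eta)$ and of $\mathbf E(\eta)$ must be carried far enough that every commutator and every remainder is \emph{strictly} smoother than the five-derivative loss, leaving no slack against the $L^2_tH^{s+5/2}$ parabolic gain; in particular one must check that the non-self-adjoint composition $G^-(\eta)\mathbf E'(\eta)$ (and, in the two-phase case, the analogous operator built from the transmission solve $B(\eta)$) can be symmetrized so that G\aa{}rding's inequality applies with the correct positive principal symbol, and that all subprincipal contributions --- including the cubic lower-order piece of $\mathbf E(\eta)$ and the geometric weights $(1+\eta_x^2)^{-5/2}$, which are only as smooth as $\eta$ permits --- are genuinely absorbable. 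A secondary point is the solvability and Lipschitz dependence of the two-phase transmission system \eqref{TwoMuskatTwo}, where one must track that $B(\eta)$ is exactly of order $-1$ so that the reduced equation retains its fifth-order parabolic structure.
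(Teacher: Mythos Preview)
Your proposal is correct and follows essentially the same paradifferential-energy scheme as the paper. Two minor technical variations are worth noting: the paper runs the energy estimate with the weighted operator $\langle T_{(1+\eta_x^2)^{-s/2}}D\rangle^s$ rather than plain $\langle D\rangle^s$ (so that its principal symbol commutes exactly with that of $|D|T_\ell$ and the commutator is cleanly lower-order, avoiding the need to invoke G\aa{}rding/symmetrization explicitly), and it regularizes by frequency truncation $J_\varepsilon$ (giving an ODE in $H^s$) rather than by artificial viscosity $\varepsilon|D|^6$; neither change alters the architecture. One small slip in your two-phase reduction: the solution operator $B(\eta)$ taking $\sigma\mathbf E(\eta)+g(\rho^--\rho^+)\eta$ to $f^-$ is of order $0$, not $-1$ --- the order-$(-1)$ piece is the inverse of the combined Dirichlet--Neumann operator, but it gets composed with one further application of $G^\pm$ before acting on the jump, which is precisely why you (correctly) end up with the fifth-order leading symbol $\tfrac{\sigma}{\mu^++\mu^-}|\xi|^5(1+\eta_x^2)^{-5/2}$.
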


We make the following remarks about the local well-posedness result.
\begin{remark}
i) Theorem \ref{t:MainOne} establishes well-posedness in the sense of Hadamard, namely: existence, uniqueness and Lipschitz dependence on initial data.
To the best of our knowledge, these are the first results on the Cauchy theory of two-dimensional Muskat problem with an elastic interface.
In particular, these are large-data well-posedness results that require only $\f12$ more derivative above the critical space.
Under this regularity setting, the curvature of the free surface $\Sigma_t$ may be unbounded and need not be locally square integrable.

ii) The restriction preventing local well-posedness in all subcritical spaces for arbitrary data arises from the remainder estimate for the Dirichlet–Neumann operator \eqref{GfEtaf}, which is valid only for $\mathfrak{s} \geq \f12$.
However, for sufficiently small initial data, the Muskat problem can be treated as a semilinear parabolic equation.
By exploiting the parabolic spacetime estimates \eqref{SpaceTimeOne} and \eqref{SpaceTimeTwo}, one can establish the global well-posedness of Muskat problem with an elastic interface in all sub-critical spaces, as stated in Theorem \ref{t:global} below. 

iii) As a quasilinear parabolic equation, the Muskat problem with an elastic interface exhibits the instantaneous parabolic smoothing.
Suppose $\eta \in C([0,T]; H^s(\R))$, then $\eta(t, \cdot) \in C^\infty(\R)$ when $t\in [\delta, T]$, for any $\delta>0$.
The proof is standard and therefore omitted. 
See \cite[Section 9]{MR3415681} for an analogous argument in the gravity Muskat setting.

\end{remark}

We define the space $\widetilde{L}^q(I; H^s(\R))$ as the space of tempered distributions $u \in \mathcal{S}' (\R^2)/ \mathcal{P}(\R)$ such that
\begin{equation*}
 \|u\|_{\widetilde{L}^q(I; H^s(\R))} : = \|2^{sj} \|P_j u \|_{L^q(I; L^2(\R))} \|_{\ell^2(\Z)} < \infty.
\end{equation*}
Our second main result is on the global well-posedness of  two-dimensional Muskat problem with an elastic interface in all sub-critical Sobolev spaces for small initial data.
 \begin{theorem}\label{t:global}
Let $s>\f32$, and the boundaries $\Gamma^\pm$ are either empty or flat $(\underline{b}^{\pm}_x = 0)$. 
We consider either the one-phase  or the  two-phase Muskat problem in the stable regime $\rho^+\leq \rho^-$. 
Let $\eta_0\in   H^s(\R)$, be an initial datum. Then there exist positive constants $\delta$ and $C$, such that the following holds: if $\| \eta_0\|_{H^s}\le \delta$ then for any $T>0$, \eqref{OneMuskat} and \eqref{TwoMuskatOne}-\eqref{TwoMuskatTwo} admit a unique solution $\eta\in C([0, T]; H^s (\R))$ satisfying
\begin{equation*}
\| \eta\|_{\wt L^\infty([0, T]; H^s)}+\frac{\sigma}{\mu^-} \| \eta\|_{\wt L^1([0, T];    H^{s+5})}\le C\| \eta_0\|_{H^s}.
\end{equation*}
 Moreover, if $\eta^1$ and $\eta^2$ to the Muskat problem, then for all $T>0$,
\begin{equation*}
\| \eta^1-\eta^2\|_{\wt L^\infty([0, T]; H^s)}+\frac{\sigma}{\mu^-} \| \eta^1-\eta^2\|_{\wt L^1([0, T]; H^{s+5})}\le C\| \eta^1(0)-\eta^2(0)\|_{H^s}.
\end{equation*}
\end{theorem}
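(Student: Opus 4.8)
The plan is to treat both \eqref{OneMuskat} and, after eliminating $f^\pm$, the system \eqref{TwoMuskatOne}--\eqref{TwoMuskatTwo} as a \emph{semilinear} fifth--order parabolic equation, by isolating the constant--coefficient linearization at the flat interface and running a Banach fixed--point argument in the Chemin--Lerner space
\[
X^s(T):=\wt L^\infty([0,T];H^s(\R))\cap \wt L^1([0,T];H^{s+5}(\R)),\qquad \|\eta\|_{X^s(T)}:=\|\eta\|_{\wt L^\infty([0,T];H^s)}+\tfrac{\sigma}{\mu^-}\|\eta\|_{\wt L^1([0,T];H^{s+5})}.
\]
Using $G^-(0)=|D|$ and the expansion $\mathbf E(\eta)=\Delta^2\eta+\mathbf E_{\ge2}(\eta)$ read off from \eqref{Elastic}, where $\mathbf E_{\ge2}$ vanishes to second order, the one--phase equation becomes $\p_t\eta+L\eta=\mathcal N(\eta)$ with
\[
L:=\tfrac1{\mu^-}|D|\bigl(\sigma|D|^4+\rho^-g\bigr),\qquad \mathcal N(\eta):=-\tfrac1{\mu^-}\bigl(G^-(\eta)-|D|\bigr)\bigl(\sigma\mathbf E(\eta)+\rho^-g\,\eta\bigr)-\tfrac{\sigma}{\mu^-}|D|\,\mathbf E_{\ge2}(\eta),
\]
so that $\mathcal N(0)=0$ and $D\mathcal N(0)=0$ --- every term of $\mathcal N$ is multilinear of degree $\ge2$, and the order--five pieces carry an $O(\eta)$ coefficient --- while the symbol of $L$ is $\tfrac1{\mu^-}|\xi|(\sigma|\xi|^4+\rho^-g)\ge\tfrac{\sigma}{\mu^-}|\xi|^5\ge0$. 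For the two--phase problem one first solves the second relation in \eqref{TwoMuskatTwo} for $f^-$, $f^-=\bigl(\tfrac1{\mu^+}G^+(\eta)-\tfrac1{\mu^-}G^-(\eta)\bigr)^{-1}\tfrac1{\mu^+}G^+(\eta)\bigl(\sigma\mathbf E(\eta)+g(\rho^--\rho^+)\eta\bigr)$ --- the operator being inverted equals $-(\tfrac1{\mu^+}+\tfrac1{\mu^-})|D|$ at $\eta=0$, hence is boundedly invertible on the relevant scales for small $\eta$ --- and substitutes into \eqref{TwoMuskatOne}, obtaining $\p_t\eta+L_2\eta=\mathcal N_2(\eta)$ with $L_2=\tfrac1{\mu^++\mu^-}|D|\bigl(\sigma|D|^4+g(\rho^--\rho^+)\bigr)$, whose symbol is nonnegative \emph{precisely} because $\rho^+\le\rho^-$ (this is where the stable regime enters, and it is what makes the estimates global in time), and $\mathcal N_2$ of the same structural quality as $\mathcal N$. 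Thus only the one--phase case need be detailed.

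Via Duhamel the equation is equivalent to the fixed--point problem $\eta=\Phi(\eta)$, $\Phi(\eta)(t):=e^{-tL}\eta_0+\int_0^te^{-(t-\tau)L}\mathcal N(\eta)(\tau)\,d\tau$. The first step is the linear theory: since the symbol of $e^{-tL}$ is $\le e^{-\frac{\sigma}{\mu^-}t|\xi|^5}$, the parabolic spacetime estimates \eqref{SpaceTimeOne}--\eqref{SpaceTimeTwo} --- obtained blockwise from $\int_0^\infty e^{-\frac{\sigma}{\mu^-}2^{5j}\tau}\,d\tau=\tfrac{\mu^-}{\sigma}2^{-5j}$ followed by $\ell^2$ summation in $j$ --- give the $T$--independent bounds $\|e^{-tL}\eta_0\|_{X^s(T)}\lesssim\|\eta_0\|_{H^s}$ and $\|\int_0^te^{-(t-\tau)L}F\,d\tau\|_{X^s(T)}\lesssim\|F\|_{\wt L^1([0,T];H^s)}$. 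The second, and main, step is the nonlinear estimate
\[
\|\mathcal N(\eta_1)-\mathcal N(\eta_2)\|_{\wt L^1([0,T];H^s)}\le C_0\bigl(\|\eta_1\|_{X^s(T)}+\|\eta_2\|_{X^s(T)}\bigr)\|\eta_1-\eta_2\|_{X^s(T)}\qquad\text{for }\|\eta_i\|_{X^s(T)}\le1,
\]
with $C_0$ absolute. Its ingredients are: the time interpolation $\|\eta\|_{\wt L^pH^{s+5/p}}\le\|\eta\|_{X^s(T)}$ (in particular $\|\eta\|_{\wt L^2H^{s+5/2}}^2\le\|\eta\|_{\wt L^\infty H^s}\|\eta\|_{\wt L^1H^{s+5}}$); the embedding $H^{s-1}(\R)\hookrightarrow L^\infty(\R)$, valid because $s>\tfrac32$, which lets all low--order factors of $\eta$ sit in $L^\infty_{t,x}$; product and paraproduct estimates in $H^s$, $s>\tfrac12$; and, decisively, the Dirichlet--Neumann remainder estimate \eqref{GfEtaf}, available here since $\mathfrak s=s>\tfrac12$, which controls $(G^\pm(\eta)-|D|)f$. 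For example, after a Bony decomposition the elastic nonlinearity obeys $\||D|\mathbf E_{\ge2}(\eta)\|_{\wt L^1H^s}\lesssim\|\eta\|_{\wt L^1H^{s+5}}\|\eta_x\|_{\wt L^\infty L^\infty}^2\lesssim\|\eta\|_{X^s(T)}^3$, while the Dirichlet--Neumann remainder term obeys $\|(G^-(\eta)-|D|)(\sigma\mathbf E(\eta)+\rho^-g\,\eta)\|_{\wt L^1H^s}\lesssim\|\eta\|_{\wt L^2H^{s+5/2}}^2\lesssim\|\eta\|_{X^s(T)}^2$, by distributing the five derivatives evenly between the two factors; the differences and the higher--degree terms (the cubic part of $\mathbf E$, the tail of the expansion of $G^\pm(\eta)$) are treated identically and are even smaller when $\|\eta_i\|_{X^s(T)}\le1$.

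The third step is a routine contraction. The two previous steps give $\|\Phi(\eta)\|_{X^s(T)}\le C_1\|\eta_0\|_{H^s}+C_1C_0\|\eta\|_{X^s(T)}^2$ and $\|\Phi(\eta_1)-\Phi(\eta_2)\|_{X^s(T)}\le C_1C_0\bigl(\|\eta_1\|_{X^s(T)}+\|\eta_2\|_{X^s(T)}\bigr)\|\eta_1-\eta_2\|_{X^s(T)}$, with $C_1,C_0$ independent of $T$. Given $\|\eta_0\|_{H^s}\le\delta$, set $R:=2C_1\|\eta_0\|_{H^s}$; if $\delta$ is so small that $2C_1C_0\delta\le\tfrac14$ (hence $R\le1$), then $\Phi$ maps $\{\|\eta\|_{X^s(T)}\le R\}$ into itself and is a $\tfrac12$--contraction there, uniformly in $T$. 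The fixed point is the sought solution, it satisfies $\|\eta\|_{X^s(T)}\le R=2C_1\|\eta_0\|_{H^s}$, i.e.\ the stated bound with $C=2C_1$, and $\eta\in C([0,T];H^s)$ since $e^{-tL}$ is strongly continuous on $H^s$ and $\mathcal N(\eta)\in\wt L^1([0,T];H^s)\subset L^1([0,T];H^s)$. For stability, $\delta\eta:=\eta^1-\eta^2$ solves $\p_t\delta\eta+L\delta\eta=\mathcal N(\eta^1)-\mathcal N(\eta^2)$, so the linear bounds and the nonlinear estimate give $\|\delta\eta\|_{X^s(T)}\le C_1\|\delta\eta(0)\|_{H^s}+\tfrac12\|\delta\eta\|_{X^s(T)}$ once the $\|\eta^i\|_{X^s(T)}$ are known to stay $\le R$ (a short bootstrap using the a priori bound), hence $\|\delta\eta\|_{X^s(T)}\le2C_1\|\delta\eta(0)\|_{H^s}$; this yields uniqueness and Lipschitz continuity, and uniqueness in $C([0,T];H^s)$ without a postulated $X^s(T)$ bound follows from the instantaneous parabolic smoothing recorded in the remarks after Theorem~\ref{t:MainOne}, which places any such solution in $X^s$ on subintervals.

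The principal obstacle is the nonlinear estimate of the second step: the order--five contributions $(G^\pm(\eta)-|D|)\Delta^2\eta$ and $|D|\mathbf E_{\ge2}(\eta)$ must be bounded in $\wt L^1_tH^s$ using only the $X^s(T)$ norm, so the five derivatives (together with the external $|D|$) must be distributed over the factors without ever exceeding the budget $\wt L^\infty H^s\cap\wt L^1H^{s+5}$ and its interpolants --- this forces a careful Littlewood--Paley and paraproduct bookkeeping and relies on the precise form of \eqref{GfEtaf}, whose validity requires $\mathfrak s\ge\tfrac12$ and thereby fixes the threshold $s>\tfrac32$. The two--phase reduction is softer but additionally requires uniform invertibility and tame mapping properties of $\tfrac1{\mu^+}G^+(\eta)-\tfrac1{\mu^-}G^-(\eta)$ near the flat interface.
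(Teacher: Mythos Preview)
Your overall architecture---integral reformulation around the fifth--order heat semigroup, fixed point in the Chemin--Lerner space $X^s(T)=\wt L^\infty H^s\cap\wt L^1H^{s+5}$, and $T$--independent parabolic spacetime bounds---is exactly the paper's approach in Section~\ref{s:Global}. The two--phase reduction you sketch (invert $\tfrac1{\mu^+}G^+(\eta)-\tfrac1{\mu^-}G^-(\eta)$ as a perturbation of $-(\tfrac1{\mu^+}+\tfrac1{\mu^-})|D|$) is likewise what the paper does via Proposition~\ref{Prop:f-} and Lemma~\ref{t:fp}.

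The genuine gap is in your treatment of the Dirichlet--Neumann remainder $R^-(\eta)f=(G^-(\eta)-|D|)f$. You invoke \eqref{GfEtaf}, but that estimate is for $G^-(\eta)f$ itself, not the remainder, and---more to the point---neither \eqref{GfEtaf} nor \eqref{GfDRf} carries an \emph{explicit} factor of $\|\eta\|$ on the right: the $\eta$--dependence sits only in the implicit constant, so no quadratic smallness comes out. One might try \eqref{GetaDiff} with $\eta_2=0$, which does produce a factor $\|\eta\|_{H^{s'}}$, but then the range constraint $\mathfrak s\le s'$ forces $s'\ge s+1$ to reach $H^s$ on the left, and the implicit constant then depends on $\|\eta(t)\|_{H^{s+1}}$, which is \emph{not} uniformly small in $X^s(T)$. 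Your claimed bound $\|(G^-(\eta)-|D|)(\sigma\mathbf E(\eta)+\rho^-g\eta)\|_{\wt L^1H^s}\lesssim\|\eta\|_{\wt L^2H^{s+5/2}}^2$ is therefore not supported by the cited tools.

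What closes the argument is a \emph{new} tame estimate, proved in the paper as Lemma~\ref{t:elliptic}:
\[
\|R^-(\eta)f\|_{H^r}\lesssim \|\eta\|_{W^{1+\epsilon,\infty}}\|f\|_{H^{r+1}}+\|f\|_{W^{1,\infty}}\|\eta\|_{H^{r+1}},\qquad r>\tfrac12,
\]
valid when $\|\eta\|_{W^{1+\epsilon,\infty}}$ is small and $\Gamma^-$ is flat or empty. This is obtained by a separate fixed--point construction of the harmonic extension in the flattened strip, and it is precisely why the theorem assumes flat/empty boundaries. With it, the contraction goes through via the asymmetric split $\wt L^\infty H^s\times\wt L^1H^{s+5}$ (not the symmetric $\wt L^2H^{s+5/2}$ split you propose), since $\|\eta\|_{W^{1+\epsilon,\infty}}\lesssim\|\eta\|_{H^s}$ for $s>\tfrac32$. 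This lemma is the missing ingredient in your proposal; everything else you wrote is correct in spirit and matches the paper.
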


For two-phase Muskat problem, the assumption $\rho^+ \leq  \rho^-$ corresponds to the situation where the  denser fluid lies below the less dense fluid, so that it is physically stable.
While local well-posedness holds regardless of density ratios, global well-posedness for small data is established here specifically for the stable regime $\rho^+ \leq  \rho^-$.
It is not clear whether or not one can obtain the global well-posedness result in the unstable regime for small data.
In the classical gravity Muskat problem without an elastic interface, the regime $\rho^+< \rho^-$ is known to be ill-posed in Sobolev spaces due to the heavier fluid being situated above the lighter one, leading to the rapid growth of interface perturbations.
See C\'ordoba \& Gancedo \cite{MR2318314} and \cite{MR2993754}, C\'ordoba, G\'omez-Serrano \& Zlato\v{s} \cite{MR3393318,MR3619874} for detailed results. 
In addition, for the Muskat problem with surface tension,  Lazar \cite{Lazar} established the global well-posedness in the strictly stable case $\rho^+<\rho^-$.

We summarize the main results established in the paper in the following table.
\begin{table}[ht]
\centering
\renewcommand{\arraystretch}{1.5}
\begin{tabular}{|l|l|c|l|}
\hline
\textbf{Phase} & \textbf{Result type} & \textbf{Regularity ($H^s(\R)$)} & \textbf{Data constraints} \\ \hline
one/two-phase & local well-posedness & $s \ge 2$ & arbitrary large data \\ \hline
one-phase & global well-posedness & $s > \frac{3}{2}$ & small initial data \\ \hline
two-phase (stable) & global well-posedness & $s > \frac{3}{2}$ & small data; $\rho^+ \le \rho^-$ \\ \hline
\end{tabular}
\caption{Well-posedness results for the Muskat problem established in this paper.}
\label{tab:muskat_results}
\end{table}

The proof of our main results relies on several key technical ingredients. 
First, we reformulate the Muskat problem into a single equation for the interface $\eta$ by utilizing the Dirichlet-Neumann operators $G^\pm(\eta)$ associated with the fluid domains $\Omega^\pm_t$. 
To address the high-order nonlinearities, we perform a detailed paralinearization of the elastic term $\mathbf{E}(\eta)$, which allows us to treat the Muskat problem as a fifth-order quasilinear paradifferential parabolic equation. 
For the local well-posedness, we derive a priori energy estimates and contraction estimates within the paradifferential framework to establish existence, uniqueness, and Lipschitz stability for arbitrary initial data. 
Finally, to obtain global results for small data, we rewrite the equation in an integral form based on a fifth-order fractional heat kernel and exploit parabolic space-time estimates to apply a fixed-point argument in sub-critical Sobolev spaces.

The rest of this paper is organized as follows.
In Section \ref{s:Reduct}, we first paralinearize the elastic term. 
Then we rewrite one-phase and two-phase Muskat problems as paradifferential parabolic equation and obtain a priori energy estimates for these equations.
In Section \ref{s:Contraction}, we first prove contraction estimates for both one-phase and two-phase Muskat problems, then we combine energy estimates and contraction estimates to obtain local well-posedness of Muskat problems.
Section \ref{s:Global} is then devoted to the proof of global well-posedness for Muskat problems.
We rewrite the solution of Muskat problems as fixed-point of some integral equations, and use the fixed-point Lemma \ref{t:fixedpoint} to obtain the unique existence of the global solution.
Many necessary paradifferential and product estimates are recalled in Appendix \ref{s:Paradifferential}.
Finally, we give some results on the Dirichlet-Neumann operators, the fractional heat kernel and the existence of fixed points in Appendix \ref{s:DNOperator}.

\section{Reduction of the Muskat problem and a priori estimates} \label{s:Reduct}
In this section we reformulate the Muskat problem using paradifferential calculus and derive a priori estimates for the Muskat problem. 
Throughout this section, we assume that $\eta \in Z^s(T)$ for $s\geq2$, and the surface remains at a positive distance from the boundaries $\Gamma^\pm$.
\begin{equation} \label{BottomH}
 \inf_{t\in[0,T]}\text{dist}(\eta(t), \Gamma^\pm)> h>0.
\end{equation}

We begin by paralinearizing the elastic term defined in $\mathbf{E}(\eta)$ in \eqref{Elastic}.
\begin{lemma} 
Let $s\geq2$, and $0<\delta \leq \f12$.
The elastic term admits the decomposition 
\begin{equation} \label{ElasticPara}
  \mathbf{E}(\eta) = T_{\ell} \eta + R_E(\eta),   
\end{equation}
where $ T_{\ell} \eta$ is the principal paradifferential term, and symbol $\ell$ is given by 
\begin{equation} \label{EllDef}
\begin{aligned}
    &\ell(x, \xi) = (1+\eta_x^2)^{-\f52}\xi^4 -2i((1+\eta_x^2)^{-\f52})_x \xi^3 \\
    -& \left(((1+\eta_x^2)^{-\f52})_{xx} -5(\eta_{xx}\eta_x(1+\eta_x^2)^{-\f72})_{x} +\f52\eta_{xx}^2(1-6\eta_x^2)(1+\eta_x^2)^{-\f92}\right) \xi^2 \\
    +& i\left(\f52 (\eta_{xx}^2(1-6\eta_x^2)(1+\eta_x^2)^{-\f92})_x-5(\eta_{xx}\eta_x(1+\eta_x^2)^{-\f72})_{xx}\right)\xi.
\end{aligned}    
\end{equation}
The lower-order remainder term $R_E(\eta)$ satisfies the estimate
\begin{equation} \label{REEtaEst}
    \|R_E(\eta)\|_{H^{s-\f32+\delta}} \lesssim \|\eta\|_{H^s} \|\eta\|_{H^{s+\f52}}. 
\end{equation}
\end{lemma}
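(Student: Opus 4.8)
The plan is to work from the divergence form of the elastic operator. Simplifying $\big(\tfrac{\eta_x}{\sqrt{1+\eta_x^2}}\big)_x=\tfrac{\eta_{xx}}{(1+\eta_x^2)^{3/2}}$ in the second expression of \eqref{Elastic} gives
\begin{equation*}
\mathbf E(\eta)=\partial_x^2\big(a(\eta_x)\,\eta_{xx}\big)+\tfrac52\,\partial_x\big(b(\eta_x)\,\eta_{xx}^2\big),\qquad a(v)=(1+v^2)^{-5/2},\quad b(v)=v(1+v^2)^{-7/2},
\end{equation*}
and we record the chain-rule identities $\big(a(\eta_x)\big)_x=a'(\eta_x)\eta_{xx}$, $a'=-5b$, $b'(v)=(1-6v^2)(1+v^2)^{-9/2}$ and $a''(v)=-5(1-6v^2)(1+v^2)^{-9/2}$, which are exactly what is needed to reconcile the coefficients of $\ell$ in \eqref{EllDef}. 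The idea is then: (1) paralinearize the interior products $a(\eta_x)\eta_{xx}$ and $b(\eta_x)\eta_{xx}^2$ by Bony's decomposition $fg=T_fg+T_gf+R(f,g)$, together with the paralinearization of the composition $F(\eta_x)=F(0)+T_{F'(\eta_x)}\eta_x+(\text{smooth remainder})$; (2) commute the outer $\partial_x^2$ and $\partial_x$ inward, using the symbolic calculus $T_aT_b=T_{a\sharp b}+(\text{smoothing})$ and the Fourier multipliers $i\xi$, $-\xi^2$; (3) collect every symbol contribution of order $\le 4$ into $T_\ell$, which identifies $\ell$, and set $R_E(\eta):=\mathbf E(\eta)-T_\ell\eta$.

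Concretely, the principal symbol $(1+\eta_x^2)^{-5/2}\xi^4$ and the $\xi^3$-term come from $\partial_x^2\circ T_{a(\eta_x)}\circ\partial_x^2$, whose symbol is $(-\xi^2)\sharp\big(-a(\eta_x)\xi^2\big)=a(\eta_x)\xi^4-2i\big(a(\eta_x)\big)_x\xi^3-\big(a(\eta_x)\big)_{xx}\xi^2$. The remaining $\xi^2$- and $\xi$-terms are assembled from three further sources: the $-\big(a(\eta_x)\big)_{xx}\xi^2$ just produced; the contribution of $\partial_x^2$ acting on the wrong-slot paraproduct $T_{\eta_{xx}}a(\eta_x)$, which after paralinearizing $a(\eta_x)$ and using $\eta_{xx}a'(\eta_x)=\big(a(\eta_x)\big)_x$ equals $\partial_x^2\circ T_{i(a(\eta_x))_x\xi}$ modulo smoother terms and contributes $-i\big(a(\eta_x)\big)_x\xi^3-2\big(a(\eta_x)\big)_{xx}\xi^2+i\big(a(\eta_x)\big)_{xxx}\xi$; and $\tfrac52\partial_x$ acting on the paralinearization of $b(\eta_x)\eta_{xx}^2$, whose leading symbol is $-2b(\eta_x)\eta_{xx}\xi^2+i\,b'(\eta_x)\eta_{xx}^2\xi$. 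Using $b=-\tfrac15a'$, i.e.\ $b(\eta_x)\eta_{xx}=-\tfrac15\big(a(\eta_x)\big)_x$ together with $\big(b(\eta_x)\eta_{xx}\big)_x=-\tfrac15\big(a(\eta_x)\big)_{xx}$, one checks that the three $\xi^3$-contributions collapse to $-2i\big(a(\eta_x)\big)_x\xi^3$ and that the $\xi^2$- and $\xi$-coefficients recombine exactly into \eqref{EllDef}; this internal cancellation is a useful consistency check.

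The remainder $R_E(\eta)$ is then a finite sum of: images under $\partial_x^2,\partial_x$ of the composition remainders for $a(\eta_x),b(\eta_x)$; images of the resonant terms $R\big(a(\eta_x)-1,\eta_{xx}\big)$, $R(\eta_{xx},\eta_{xx})$ and their companions; images of the wrong-slot paraproducts once the outer derivative has been moved onto the $\eta$-factor and the top-order symbol has been extracted; and the symbolic-calculus and bilinear paraproduct errors $T_aT_b-T_{a\sharp b}$, $T_fT_g-T_{fg}$. For each of these we invoke the product, paraproduct, resonant and symbolic-calculus estimates of Appendix \ref{s:Paradifferential}, keeping two bookkeeping principles in mind: (a) retain the divergence structure, so that a rough coefficient such as $\big((1+\eta_x^2)^{-5/2}\big)_x$ never multiplies a genuine function of comparable regularity but only occurs inside a paradifferential operator—whose attendant remainder is then controlled by the H\"older/Sobolev norm of the smooth function $(1+\eta_x^2)^{-5/2}$ of $\eta_x\in H^{s-1}$ (recall $s-1\ge1>\tfrac12$), not by its derivatives; (b) in every bilinear or multilinear term place exactly one factor—typically the highest $x$-derivative of $\eta$—in $H^{s+5/2}$ and estimate all remaining factors by $\|\eta\|_{H^s}$ (equivalently $\|\eta_x\|_{H^{s-1}}$), using the elementary interpolation $\|\eta\|_{H^\sigma}\lesssim\|\eta\|_{H^s}^{1-\theta}\|\eta\|_{H^{s+5/2}}^{\theta}$ for $s\le\sigma\le s+\tfrac52$ to distribute derivatives. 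A routine case analysis then places each term in $H^{s-3/2+\delta}$ with the bilinear bound \eqref{REEtaEst}. The main difficulty—and the place where the hypotheses $s\ge2$ and $0<\delta\le\tfrac12$ are used—is precisely this last step: since $\mathbf E$ has order $4$ and $T_\ell\eta$ lies only in $H^{s-3/2}$ when $\eta\in H^{s+5/2}$, the remainder is required to be merely $\delta\le\tfrac12$ derivatives smoother than $T_\ell\eta$, so the estimate is sharp and the endpoint $s=2$, $\delta=\tfrac12$ is tight (for instance $\partial_x^2R\big(a(\eta_x)-1,\eta_{xx}\big)$ only just lands in $H^{2s-3}=H^{s-3/2+\delta}$ there); consequently the endpoint forms of the paraproduct and composition estimates are needed, including $\|T_gf\|_{H^{r-\epsilon}}\lesssim\|g\|_{H^{1/2}}\|f\|_{H^r}$ when the symbol only belongs to $H^{1/2}$.
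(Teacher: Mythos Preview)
Your proposal is correct and uses essentially the same machinery as the paper's proof—paralinearization of smooth functions of $\eta_x$, Bony decomposition of products, and the exact Leibniz rule $\partial_x(T_c u)=T_c\partial_x u+T_{c_x}u$—to arrive at the same symbol $\ell$ and the same remainder bound.

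The organization differs slightly. The paper works iteratively from the inside out: it paralinearizes $\eta_x/\sqrt{1+\eta_x^2}$, takes a derivative, multiplies by $1/(1+\eta_x^2)$ (again via paraproducts), then differentiates twice more, at each stage carrying only the leading $T_{c}\partial_x^k\eta$ terms and pushing the rest into the remainder. You instead start from the compact divergence form $\partial_x^2(a(\eta_x)\eta_{xx})+\tfrac52\partial_x(b(\eta_x)\eta_{xx}^2)$, apply Bony's decomposition to the inner products, and then use the $\sharp$-expansion to commute the outer derivatives through. Because the outer operators $\partial_x,\partial_x^2$ are constant-coefficient Fourier multipliers, the $\sharp$-expansion terminates exactly and reproduces the paper's Leibniz computations, so the two routes coincide term by term (your $\xi^3$-cancellation check is exactly the identity $a'=-5b$ that the paper uses implicitly). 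Your organization makes the structure of $\ell$ as a composed symbol more transparent, while the paper's step-by-step approach avoids having to track the wrong-slot paraproduct $T_{\eta_{xx}}a(\eta_x)$ separately, since it is absorbed automatically by paralinearizing $\tfrac{1}{1+\eta_x^2}\big(\tfrac{\eta_x}{\sqrt{1+\eta_x^2}}\big)_x$ as a single smooth function of $(\eta_x,\eta_{xx})$.
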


\begin{proof}
Using the paralinearization \eqref{Paralinearization}, we can write
\begin{equation*}
\dfrac{\eta_x}{\sqrt{1+\eta_x^2}} =  T_{(1+\eta_x^2)^{-\f32}} \eta_x + R_1, \quad \dfrac{\eta_x}{(1+\eta_x^2)^\f72} =  T_{(1-6\eta_x^2)(1+\eta_x^2)^{-\f92}} \eta_x + R_2
\end{equation*}
where the remainder terms $R_1$ and $R_2$ satisfy
\begin{equation*}
 \|R_1\|_{H^{s+\f12 + \delta}} + \|R_2\|_{H^{s+\f12 + \delta}}\lesssim_{\|\eta_x\|_{L^\infty}} \| \eta_x\|_{H^{s+\f32}} \|\eta_x\|_{C^\delta_*} \lesssim \|\eta\|_{H^s} \| \eta\|_{H^{s+\f52}}.
\end{equation*}
We then get that using \eqref{BalancedError},
\begin{align*}
 &\left(\dfrac{\eta_x}{\sqrt{1+\eta_x^2}}\right)_x = T_{(1+\eta_x^2)^{-\f32}} \eta_{xx} -3T_{\eta_{xx}\eta_x(1+\eta_x^2)^{-\f52}} \eta_{x} + R_3, \quad \|R_3\|_{H^{s+\f12 + \delta}}\lesssim \|\eta\|_{H^s} \| \eta\|_{H^{s+\f52}}, \\
 &\dfrac{1}{1+\eta_x^2}\left(\dfrac{\eta_x}{\sqrt{1+\eta_x^2}}\right)_x = T_{(1+\eta_x^2)^{-\f52}} \eta_{xx} -5T_{\eta_{xx}\eta_x(1+\eta_x^2)^{-\f72}} \eta_{x} + R_4, \quad \|R_4\|_{H^{s+\f12 + \delta}}\lesssim \|\eta\|_{H^s} \| \eta\|_{H^{s+\f52}}, \\
 &\f{\eta_x\eta_{xx}^2}{(1+\eta^2_x)^{\f72}} = 2T_{\eta_{xx}\eta_x(1+\eta_x^2)^{-\f72}}\eta_{xx} + T_{\eta_{xx}^2(1-6\eta_x^2)(1+\eta_x^2)^{-\f92}} \eta_x + R_5, \quad \|R_5\|_{H^{s-\f12 + \delta}}\lesssim \|\eta\|_{H^s} \| \eta\|_{H^{s+\f52}}.
\end{align*}
By taking the spatial derivative on these terms, for the remainder term $R$ that satisfies \eqref{REEtaEst},
\begin{align*}
    &\left( \dfrac{1}{1+\eta_x^2}\left(\dfrac{\eta_x}{\sqrt{1+\eta_x^2}}\right)_x\right)_{xx} = T_{(1+\eta_x^2)^{-\f52}} \p_x^4\eta + 2T_{((1+\eta_x^2)^{-\f52})_x} \p_x^3\eta + T_{((1+\eta_x^2)^{-\f52})_{xx}} \eta_{xx}\\
    -&5T_{\eta_{xx}\eta_x(1+\eta_x^2)^{-\f72}} \p_x^3\eta -10T_{(\eta_{xx}\eta_x(1+\eta_x^2)^{-\f72})_x} \eta_{xx} -5T_{(\eta_{xx}\eta_x(1+\eta_x^2)^{-\f72})_{xx}} \eta_x + R, \\
    &\f52\left(\f{\eta_x\eta_{xx}^2}{(1+\eta^2_x)^{\f72}}\right)_x = 5T_{\eta_{xx}\eta_x(1+\eta_x^2)^{-\f72}}\p_x^3\eta + 5T_{(\eta_{xx}\eta_x(1+\eta_x^2)^{-\f72})_x}\eta_{xx}\\
    +& \f52 T_{\eta_{xx}^2(1-6\eta_x^2)(1+\eta_x^2)^{-\f92}} \eta_{xx}  + \f52 T_{(\eta_{xx}^2(1-6\eta_x^2)(1+\eta_x^2)^{-\f92})_x} \eta_{x} + R.
\end{align*}
Adding these two terms together, we get the paralinearization of the elastic term \eqref{ElasticPara}.
\end{proof}

We emphasize that the precise explicit expression of the symbol $\ell$ is not essential to the subsequent analysis.
What is crucial is its elliptic structure.
In particular, for $u\in H^{s+4}$, $s> \f32$, the symbol $\ell$ satisfies
\begin{equation} \label{ellEst}
    \|T_{\ell} u - \p_x^4 u\|_{H^s} \lesssim \|\eta_x\|_{C^\epsilon_*} \|u \|_{H^{s+4}} \lesssim \|\eta\|_{H^s} \| \eta\|_{H^{s+4}}.
\end{equation}

\subsection{The one-phase case}
The one-phase Muskat problem \eqref{OneMuskat} is reformulated into a paradifferential form to facilitate energy estimates.
\begin{proposition}
For $\delta \in (0, \f12]$, the one-phase Muskat problem \eqref{OneMuskat} can be rewritten as 
\begin{equation} \label{ParaOneMuskat}
 \p_t \eta = -\frac{\sigma}{\mu^{-}} |D|T_\ell \eta + K, 
\end{equation}
where the remainder term $K$ is controlled by the initial data and gravitational terms as follows:
\begin{equation*}
    \|K\|_{H^{s-\f52+\delta}} \lesssim_{\|\eta\|_{H^s}} \frac{\sigma}{\mu^{-}}  \|\eta\|_{H^{s+\f52}} + \frac{\rho^- g}{\mu^-}\| \eta\|_{H^{s-\f32+\delta}}.
\end{equation*}
\end{proposition}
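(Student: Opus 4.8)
The plan is to strip the right-hand side of \eqref{OneMuskat} of its two nonlinear operators, $\mathbf E(\eta)$ and $G^-(\eta)$, one layer at a time. Using the linearity of $G^-(\eta)$, write
\begin{equation*}
\p_t\eta=-\f{\sigma}{\mu^-}G^-(\eta)\mathbf E(\eta)-\f{\rho^-g}{\mu^-}G^-(\eta)\eta .
\end{equation*}
The gravity term is placed directly into $K$: $G^-(\eta)$ is an operator of order one, and by the mapping estimates of Appendix \ref{s:DNOperator} it sends $H^{s-\f32+\delta}$ into $H^{s-\f52+\delta}$ with norm controlled by $\|\eta\|_{\dot{W}^{1,\infty}}$ and the depth, hence by $\|\eta\|_{H^s}$ since $s\ge 2$; this produces exactly the $\f{\rho^-g}{\mu^-}\|\eta\|_{H^{s-\f32+\delta}}$ contribution in the stated bound.

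For the elastic term I would substitute the paralinearization $\mathbf E(\eta)=T_\ell\eta+R_E(\eta)$ of \eqref{ElasticPara}, so that
\begin{equation*}
G^-(\eta)\mathbf E(\eta)-|D|T_\ell\eta=\bigl(G^-(\eta)-|D|\bigr)T_\ell\eta+G^-(\eta)R_E(\eta).
\end{equation*}
The second piece is immediate: \eqref{REEtaEst} gives $\|R_E(\eta)\|_{H^{s-\f32+\delta}}\lesssim\|\eta\|_{H^s}\|\eta\|_{H^{s+\f52}}$, and one more application of the order-one bound for $G^-(\eta)$ lands $G^-(\eta)R_E(\eta)$ in $H^{s-\f52+\delta}$ of size $\lesssim_{\|\eta\|_{H^s}}\|\eta\|_{H^{s+\f52}}$. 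For the first piece, I would begin by checking — via the boundedness of the order-four, rough-coefficient paradifferential operator $T_\ell$, using the product and composition estimates of Appendix \ref{s:Paradifferential} and the bound $\|\eta\|_{\dot{W}^{1,\infty}}\lesssim\|\eta\|_{H^s}$ — that $T_\ell\eta\in H^{s-\f32}$ with $\|T_\ell\eta\|_{H^{s-\f32}}\lesssim_{\|\eta\|_{H^s}}\|\eta\|_{H^{s+\f52}}$. Then I would invoke the paralinearization of the Dirichlet–Neumann operator, estimate \eqref{GfEtaf}: in the present two-dimensional fluid setting its principal symbol is exactly $|\xi|$ (because $(1+\eta_x^2)\xi^2-(\eta_x\xi)^2=\xi^2$), so the leading operator is literally $|D|$, and the remainder $G^-(\eta)-|D|$ gains enough regularity — this is where the restriction $\delta\le\f12$ is used — to map $H^{s-\f32}$ into $H^{s-\f52+\delta}$ with norm $\lesssim_{\|\eta\|_{H^s}}1$. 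Collecting the three pieces gives $K$ with the asserted estimate.

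The \emph{main obstacle} is this last step: applying \eqref{GfEtaf} at the low regularity $s\ge 2$ to the function $T_\ell\eta$, which carries only $H^{s-\f32}$ regularity while the symbol $\ell$ itself has coefficients of negative Hölder regularity (its lowest-order-in-$\xi$ coefficient involves four derivatives of $\eta$). One must therefore balance the derivative budget so that the Hölder-norm factors always fall on a low-order piece of $\eta$ — controlled by $\|\eta\|_{H^s}$ through Sobolev embedding, e.g.\ $\eta_x\in C^{1/2}_*$ when $s=2$ — while the five surplus derivatives in $|D|T_\ell$ are absorbed into the single factor $\|\eta\|_{H^{s+\f52}}$; everything else is routine bookkeeping with the paradifferential estimates of Appendix \ref{s:Paradifferential} and the Dirichlet–Neumann bounds of Appendix \ref{s:DNOperator}.
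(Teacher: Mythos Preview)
Your proof is correct and uses the same ingredients as the paper; the only real difference is the order in which you peel off the two nonlinear layers. The paper first applies the Dirichlet--Neumann decomposition to the full elastic term and only afterwards paralinearizes inside the Fourier multiplier $|D|$, obtaining
\[
G^-(\eta)\mathbf E(\eta)-|D|T_\ell\eta \;=\; R^-(\eta)\mathbf E(\eta)\;+\;|D|R_E(\eta),
\]
so that $R^-(\eta)$ hits $\mathbf E(\eta)$, whose $H^{s-\f32}$ norm follows from a direct Moser-type product estimate. Your ordering produces $R^-(\eta)T_\ell\eta+G^-(\eta)R_E(\eta)$ instead, which is why you are forced to control $\|T_\ell\eta\|_{H^{s-\f32}}$---the step you single out as the ``main obstacle.'' That obstacle is real but dissolves with no symbol-by-symbol bookkeeping: since $T_\ell\eta=\mathbf E(\eta)-R_E(\eta)$, the Moser bound on $\mathbf E(\eta)$ together with \eqref{REEtaEst} already gives $\|T_\ell\eta\|_{H^{s-\f32}}\lesssim_{\|\eta\|_{H^s}}\|\eta\|_{H^{s+\f52}}$. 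So both routes work; the paper's just sidesteps the detour. (A small correction: the paralinearization estimate for $G^-(\eta)-|D|$ you invoke is \eqref{GfDRf}, not \eqref{GfEtaf}.)
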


\begin{proof}
Recall the one-phase Muskat problem reads
\begin{equation*}
  \p_t \eta = -\frac{\sigma}{\mu^-}G^-(\eta) \mathbf{E}(\eta) -\frac{\rho^-g}{\mu^-} G^-(\eta)\eta.
\end{equation*}
For the term $G^-(\eta)\eta$, the estimate \eqref{GfEtaf} with $\mathfrak{s} = s-\f32+\delta$ yields
\begin{equation*}
   \| G^-(\eta)\eta\|_{H^{s-\f52+\delta}} \lesssim_{\|\eta\|_{H^s}} \|\eta\|_{H^{s-\f32+\delta}}.
\end{equation*}
Regarding the term $G^-(\eta) \mathbf{E}(\eta)$, we use \eqref{GfDRf}, and \eqref{GfEtaf} to write
\begin{equation*}
 G^-(\eta) \mathbf{E}(\eta) = |D|\mathbf{E}(\eta) + R^{-}(\eta)\mathbf{E}(\eta),
\end{equation*}
where using \eqref{GfDRf},
\begin{equation*}
 \| R^{-}(\eta)\mathbf{E}(\eta) \|_{H^{s-\f52+\delta}} \lesssim_{\|\eta\|_{H^s}} \|\mathbf{E}(\eta) \|_{H^{s-\f32}} \lesssim_{\|\eta\|_{H^s}} \|\eta\|_{H^{s+\f52}},
\end{equation*}
so that this term can be put into $K$.
For the $|D|\mathbf{E}(\eta)$ term, we use \eqref{ElasticPara} to estimate the difference
\begin{equation*}
\||D|\mathbf{E}(\eta) - |D|T_{\ell}\eta \|_{H^{s-\f52+\delta}} \lesssim  \|\mathbf{E}(\eta) - T_{\ell}\eta \|_{H^{s-\f32+\delta}}\lesssim_{\|\eta\|_{H^s}} \|\eta\|_{H^{s+\f52}}.    
\end{equation*}
By putting the perturbative terms into $K$, we obtain the estimate \eqref{ParaOneMuskat}.
\end{proof}

At leading order, the symbol $|\xi|\ell(x,\xi) \approx (1+\eta_x^2)^{-\f52}|\xi|^5$, which is elliptic of order $5$.
Consequently, the one-phase Muskat problem \eqref{ParaOneMuskat} is a fifth-order quasilinear parabolic equation.

We have the following a priori estimate for the one-phase Muskat equation.
\begin{proposition} \label{t:OnePhaseEst}
Let $s\geq2$, assume that $\eta \in Z^s(T)$ is a solution to \eqref{ParaOneMuskat} that satisfies \eqref{BottomH}.
Then there exists a function $\mathcal{F}: \R^+ \rightarrow \R^+$ depending only on $(h,s,\frac{\rho^- g}{\mu^-},\frac{\sigma}{\mu^-})$ such that
\begin{equation} \label{OneAprioriEst}
  \| \eta\|_{Z^s(T)} \lesssim \mathcal{F}\big(\|\eta(0,\cdot)\|_{H^s} + T^\f12\mathcal{F}( \| \eta\|_{Z^s(T)})\big).
\end{equation}
\end{proposition}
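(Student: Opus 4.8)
The plan is to run a standard parabolic energy estimate on the paradifferential equation \eqref{ParaOneMuskat}, commuting $|D|^s$ (or, more precisely, a Littlewood--Paley truncation followed by a dyadic sum) through the equation and exploiting the ellipticity of the principal symbol $|\xi|\ell(x,\xi)$ to produce a coercive dissipation term that controls the $L^2([0,T];H^{s+5/2})$ norm. First I would apply $\Lambda^s := \langle D\rangle^s$ to \eqref{ParaOneMuskat}, set $\eta_s := \Lambda^s \eta$, and write $\frac12\frac{d}{dt}\|\eta_s\|_{L^2}^2 = -\frac{\sigma}{\mu^-}\big( \Lambda^s |D| T_\ell \eta, \eta_s\big)_{L^2} + (\Lambda^s K, \eta_s)_{L^2}$. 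The commutator $[\Lambda^s, |D|T_\ell]$ is of order $s+5-1 = s+4$ with coefficients controlled by $\|\eta\|_{H^s}$ (using the symbolic calculus and paraproduct estimates from Appendix \ref{s:Paradifferential}), so the commutator term is bounded by $\mathcal F(\|\eta\|_{H^s})\|\eta\|_{H^{s+2}}^2$, which after interpolation and Young's inequality is absorbed: a small multiple of the $H^{s+5/2}$-dissipation plus $\mathcal F(\|\eta\|_{H^s})\|\eta\|_{H^s}^2$. The main term $-\frac{\sigma}{\mu^-}\big(|D|T_\ell \eta_s, \eta_s\big)_{L^2}$ is handled via the Gårding-type inequality for the elliptic positive symbol $|\xi|\ell$: since $\mathrm{Re}\,|\xi|\ell(x,\xi) \gtrsim (1+\|\eta_x\|_{L^\infty}^2)^{-5/2}|\xi|^5$ at leading order, one gets $\big(|D|T_\ell \eta_s,\eta_s\big)_{L^2} \geq c\|\eta_s\|_{\dot H^{5/2}}^2 - \mathcal F(\|\eta\|_{H^s})\|\eta_s\|_{L^2}^2$, where $c = c(h,\|\eta\|_{C^1})>0$; here I use \eqref{ellEst} to compare $T_\ell$ with $\partial_x^4$ and the sub-principal terms are lower order. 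This yields the differential inequality
\begin{equation*}
\frac{d}{dt}\|\eta\|_{H^s}^2 + \frac{c\sigma}{\mu^-}\|\eta\|_{H^{s+5/2}}^2 \leq \mathcal F\big(\|\eta\|_{H^s}\big)\Big(\|\eta\|_{H^s}^2 + \|K\|_{H^{s-5/2+\delta}}\|\eta\|_{H^{s+5/2}}\Big).
\end{equation*}

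Next I would plug in the remainder bound from the preceding proposition, $\|K\|_{H^{s-5/2+\delta}} \lesssim_{\|\eta\|_{H^s}} \frac{\sigma}{\mu^-}\|\eta\|_{H^{s+5/2}} + \frac{\rho^- g}{\mu^-}\|\eta\|_{H^{s-3/2+\delta}}$. The term $\frac{\sigma}{\mu^-}\|\eta\|_{H^{s+5/2}}^2$ appearing in the forcing is the delicate point: it is of the same order as the dissipation, so one cannot simply absorb it unless its coefficient — which carries a factor depending on $\|\eta\|_{H^s}$ but \emph{not} on higher norms — is genuinely perturbative. The resolution is that this contribution to $K$ comes from the remainder $R^-(\eta)\mathbf E(\eta)$ and from $|D|(\mathbf E(\eta) - T_\ell\eta)$, both of which, upon re-examination, gain regularity \emph{relative to the top order}: more carefully, the honest statement is $\|K\|_{H^{s-5/2+\delta}}$ with $\delta>0$, so pairing $\Lambda^s K$ against $\eta_s$ loses only $s - (s-5/2+\delta) = 5/2-\delta$ derivatives, giving a bound $\|\eta\|_{H^{s+5/2-\delta}}\|\eta\|_{H^s} \cdot \mathcal F$; by interpolation $\|\eta\|_{H^{s+5/2-\delta}} \leq \epsilon\|\eta\|_{H^{s+5/2}} + C_\epsilon\|\eta\|_{H^s}$, which is now safely absorbed into the dissipation, leaving $\mathcal F(\|\eta\|_{H^s})\|\eta\|_{H^s}^2$. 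After absorbing, integrating in time from $0$ to $t\leq T$, and bounding the time integral of $\mathcal F(\|\eta(t)\|_{H^s})\|\eta(t)\|_{H^s}^2$ crudely by $T\sup_{[0,T]}\mathcal F(\|\eta\|_{H^s})\|\eta\|_{H^s}^2 \leq T\,\mathcal F(\|\eta\|_{Z^s(T)})^2$, one obtains
\begin{equation*}
\sup_{t\in[0,T]}\|\eta(t)\|_{H^s}^2 + \|\eta\|_{L^2([0,T];H^{s+5/2})}^2 \lesssim \|\eta(0)\|_{H^s}^2 + T\,\mathcal F\big(\|\eta\|_{Z^s(T)}\big)^2,
\end{equation*}
and taking square roots gives \eqref{OneAprioriEst} with the stated dependence of $\mathcal F$ on $(h,s,\frac{\rho^- g}{\mu^-},\frac{\sigma}{\mu^-})$ (the dependence on $h$ enters through the lower bound on the coercivity constant $c$ and through the Dirichlet--Neumann estimates \eqref{GfEtaf}, \eqref{GfDRf}).

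The main obstacle, as indicated above, is the borderline term: the forcing $K$ contains a piece of size $\frac{\sigma}{\mu^-}\|\eta\|_{H^{s+5/2}}$, which lives at exactly half of the top order and therefore competes directly with the parabolic dissipation. Making the absorption rigorous requires keeping track of the $+\delta$ regularity gain throughout — in the paralinearization Lemma \eqref{REEtaEst}, in the Dirichlet--Neumann remainder \eqref{GfDRf}, and in the symbol comparison \eqref{ellEst} — and then invoking an interpolation inequality with a genuinely small constant in front of $\|\eta\|_{H^{s+5/2}}$; the coercivity constant $c$ must be fixed \emph{first} (it depends only on $h$ and on $\|\eta\|_{C^1}\lesssim\|\eta\|_{H^s}$ via Sobolev, and here is where the hypothesis $s\geq 2$ is used to get $\eta_x\in C^\epsilon_*$), and only afterward does one choose $\epsilon$ in the interpolation. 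A secondary technical point is justifying the energy identity itself: since $\eta\in Z^s(T)$ only, the computation should be carried out on Littlewood--Paley pieces $P_j\eta$ (which are smooth in $x$), yielding a bound with an $\ell^2(j)$-summable constant, and then summed — this is the reason the dissipation naturally appears in the $Z^s$ norm. Uniqueness and the Lipschitz bound \eqref{StabilityOne} are deferred to Section \ref{s:Contraction} and are not part of this proposition.
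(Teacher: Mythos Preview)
Your overall strategy is correct and matches the paper's proof: commute an order-$s$ operator through \eqref{ParaOneMuskat}, extract coercivity from the elliptic principal symbol $(1+\eta_x^2)^{-5/2}|\xi|^5$, and absorb all remaining terms via interpolation and Young using the $+\delta$ gain. Two differences and two imprecisions are worth flagging. First, the paper applies the twisted operator $\langle T_{(1+\eta_x^2)^{-s/2}}D\rangle^s$ rather than the plain $\Lambda^s=\langle D\rangle^s$, and obtains coercivity by the explicit square-root factorization $T_{m^{[5]}}\approx T_{\sqrt{m^{[5]}}}^{\!*}T_{\sqrt{m^{[5]}}}$ instead of citing a G{\aa}rding inequality; your choice works too, but the square-root route is what is available from the paradifferential calculus in Appendix~\ref{s:Paradifferential}, whereas a black-box G{\aa}rding at this symbol regularity would itself be proved exactly that way. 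Second, your claim that $[\Lambda^s,|D|T_\ell]$ is of order $s+4$ is too strong at $s=2$: the coefficients of $\ell$ are only $C^{\delta}_*$ in $x$ (since $\eta_x\in C^{s-3/2}_*$), so \eqref{CompositionPara} only gives order $s+5-\delta$, and likewise the lower-order term in your G{\aa}rding bound should be $\|\eta_s\|_{H^{5/2-\delta}}^2$ rather than $\|\eta_s\|_{L^2}^2$. Neither imprecision is fatal, because the resulting error terms are of the form $\mathcal F(\|\eta\|_{H^s})\|\eta\|_{H^{s+5/2}}\|\eta\|_{H^{s+5/2-\delta}}$ --- precisely what the paper obtains --- and your interpolation-plus-Young absorption handles them; just correct the stated orders.
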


\begin{proof}
We apply $\langle T_{(1+\eta_x^2)^{-\frac{s}{2}}}D\rangle^s$ to the equation \eqref{ParaOneMuskat}, and denote $\eta_s := \langle T_{(1+\eta_x^2)^{-\frac{s}{2}}}D\rangle^s \eta$.
Then $\eta_s$ solves the equation
\begin{equation*} 
     \p_t \eta_s = -\frac{\sigma}{\mu^{-}} |D|T_\ell \eta_s -\frac{\sigma}{\mu^{-}}\big[\langle T_{(1+\eta_x^2)^{-\frac{s}{2}}}D \rangle^s, |D|T_\ell \big] \eta_s +  \langle T_{(1+\eta_x^2)^{-\frac{s}{2}}}D\rangle^sK.
\end{equation*}
Taking the time derivative of the $\|\eta_s\|_{L^2}^2$, we get 
\begin{align*}
 \f12\frac{d}{dt} \|\eta_s\|_{L^2}^2 =& -\frac{\sigma}{\mu^{-}} (|D|T_\ell \eta_s, \eta_s)_{L^2 \times L^2} - \frac{\sigma}{\mu^{-}} ([\langle T_{(1+\eta_x^2)^{-\frac{s}{2}}}D \rangle^s, |D|T_\ell] \eta_s, \eta_s)_{L^2 \times L^2} \\
 &+ (\langle T_{(1+\eta_x^2)^{-\frac{s}{2}}}D\rangle^sK, \eta_s)_{L^2 \times L^2}.
\end{align*}
From the above proposition, for the last term involving $K$,
\begin{align*}
&\left|(\langle T_{(1+\eta_x^2)^{-\frac{s}{2}}}D\rangle^sK, \eta_s)_{L^2 \times L^2}\right|
\leq\|\langle T_{(1+\eta_x^2)^{-\frac{s}{2}}}D\rangle^sK, \eta_s\|_{H^{-\f52+\delta}}\|\eta_s\|_{H^{\f52-\delta}}\\
&\lesssim_{\|\eta\|_{H^s}}\left(\frac{\sigma}{\mu^{-}}   + \frac{\rho^- g}{\mu^-}\right)\|\eta\|_{H^{s+\f52}}\| \eta\|_{H^{s+\f52-\delta}}.
\end{align*}
Since at the leading order,  we use the symbolic calculus \eqref{CompositionPara}, the commutator satisfies
\begin{equation*}
 \big\|[\langle T_{(1+\eta_x^2)^{-\frac{s}{2}}}D \rangle^s, |D|T_\ell] \eta_s \big\|_{H^{-\f52+\delta}} \lesssim_{\|\eta\|_{H^s}} \|\eta\|_{H^{s+\f52}},
\end{equation*}
so that the commutator term has the estimate
\begin{equation*}
\big| ([\langle T_{(1+\eta_x^2)^{-\frac{s}{2}}}D \rangle^s, |D|T_\ell] \eta_s, \eta_s)_{L^2 \times L^2}\big| \lesssim_{\|\eta\|_{H^s}}\|\eta\|_{H^{s+\f52}}\| \eta\|_{H^{s+\f52-\delta}}.
\end{equation*}
For the first term in the time derivative of the $\|\eta_s\|_{L^2}^2$, we split the symbol as the sum of the leading order term, and the lower-order term 
\begin{equation*}
    |D|T_\ell =  T_{m^{[5]}} + T_{m^{[\leq 4]}}, \quad m^{[5]} = (1+\eta_x^2)^{-\f52}|\xi|^5,
\end{equation*}
where the symbol $m^{[\leq 4]}$ is the lower-order part of $|D|T_\ell$ of order at most $4$, and $m^{[5]}$ is strictly elliptic as long as $\eta \in \dot{W}^{1,\infty}$.
For the estimate that involves $m^{[\leq 4]}$, we get 
\begin{equation*}
 \frac{\sigma}{\mu^{-}} |(T_{m^{[\leq 4]}} \eta_s, \eta_s)_{L^2 \times L^2}| \leq \frac{\sigma}{\mu^{-}} \| T_{m^{[\leq 4]}} \eta_s\|_{H^{-\f52}} \| \eta_s\|_{H^{\f52}}  \lesssim_{\|\eta\|_{H^s}} \frac{\sigma}{\mu^{-}}\|\eta\|_{H^{s+\f52}}\| \eta\|_{H^{s+\f52-\delta}}.     
\end{equation*}
For the term that involves $m^{[5]}$, we write 
\begin{align*}
 (T_{m^{[5]}} \eta_s, \eta_s)_{L^2 \times L^2} &= \big(T_{\sqrt{m^{[5]}}} \eta_s, T_{\sqrt{m^{[5]}}}\eta_s\big)_{L^2 \times L^2} + \big(T_{\sqrt{m^{[5]}}} \eta_s, \big((T_{\sqrt{m^{[5]}}})^{*} -T_{\sqrt{m^{[5]}}}\big)\eta_s\big)_{L^2 \times L^2} \\
 &+ \big((T_{m^{[5]}}-T_{\sqrt{m^{[5]}}} T_{\sqrt{m^{[5]}}}) \eta_s, \eta_s\big)_{L^2 \times L^2}.
\end{align*}
For the first term in $(T_{m^{[5]}} \eta_s, \eta_s)_{L^2 \times L^2}$, following the estimate $(3.18)$ in \cite{MR4131404}, by uisng \eqref{TABound} and \eqref{CompositionPara}, we have
\begin{align*}
\|\eta_s\|_{H^{\f52}}&\leq
\|T_{\sqrt{m^{[5]}}^{-1}}T_{\sqrt{m^{[5]}}}\eta_s\|_{H^{\f52}}
+\|(Id-T_{\sqrt{m^{[5]}}^{-1}}T_{\sqrt{m^{[5]}}})\eta_s\|_{H^{\f52}}\\
&\lesssim_{\|\eta\|_{H^s}}\|T_{\sqrt{m^{[5]}}}\eta_s\|_{L^2}+\|\eta_s\|_{H^{\f52-\delta}},
\end{align*}
which gives that
\begin{equation*}
  \|T_{\sqrt{m^{[5]}}} \eta_s\|^2_{L^2} \geq \frac{1}{\mathcal{F}(\|\eta\|_{H^s})} \|\eta\|^2_{H^{s+\f52}} - \mathcal{F}(\|\eta\|_{H^s})\|\eta\|_{H^{s+\f52}}\| \eta\|_{H^{s+\f52-\delta}},
\end{equation*}
function $\mathcal{F}: \R^+ \rightarrow \R^+$.
Using the symbolic calculus \eqref{CompositionPara} and \eqref{AdjointBound}, 
\begin{equation*}
 \|\big((T_{\sqrt{m^{[5]}}})^{*} -T_{\sqrt{m^{[5]}}}\big)\eta_s \|_{H^\delta} + \|(T_{m^{[5]}}-T_{\sqrt{m^{[5]}}} T_{\sqrt{m^{[5]}}}) \eta_s\|_{H^{-\f52 + \delta}} \lesssim_{\|\eta\|_{H^s}}\|\eta\|_{H^{s+\f52}}.
\end{equation*}
we bound the second and the third terms in $(T_{m^{[5]}} \eta_s, \eta_s)_{L^2 \times L^2}$,
\begin{align*}
    &\big(T_{\sqrt{m^{[5]}}} \eta_s, \big((T_{\sqrt{m^{[5]}}})^{*} -T_{\sqrt{m^{[5]}}}\big)\eta_s\big)_{L^2 \times L^2} + \big((T_{m^{[5]}}-T_{\sqrt{m^{[5]}}} T_{\sqrt{m^{[5]}}}) \eta_s, \eta_s\big)_{L^2 \times L^2}\\
    &\lesssim_{\|\eta\|_{H^s}}\|\eta\|_{H^{s+\f52}}\| \eta\|_{H^{s+\f52-\delta}}.
\end{align*}
Combining the above estimates 
\begin{equation*}
-(|D|T_\ell \eta_s, \eta_s)_{L^2 \times L^2} \leq - \frac{1}{\mathcal{F}(\|\eta\|_{H^s})} \|\eta\|^2_{H^{s+\f52}} +\mathcal{F}(\|\eta\|_{H^s})\|\eta\|_{H^{s+\f52}}\| \eta\|_{H^{s+\f52-\delta}}. 
\end{equation*}
As a consequence,
\begin{equation*}
 \f12\frac{d}{dt} \|\eta_s\|_{L^2}^2 \leq -\frac{\sigma}{\mu^{-}}\frac{1}{\mathcal{F}(\|\eta\|_{H^s})} \|\eta\|^2_{H^{s+\f52}} + \Big(\frac{\sigma}{\mu^{-}} + \frac{\rho^- g}{\mu^-}  \Big) \mathcal{F}(\|\eta\|_{H^s})\|\eta\|_{H^{s+\f52}}\| \eta\|_{H^{s+\f52-\delta}},
\end{equation*}
for some function $\mathcal{F}$ that depends only on $h$ and $s$.
For the factor $\| \eta\|_{H^{s+\f52-\delta}}$, we use the interpolation
\begin{equation*}
 \| \eta\|_{H^{s+\f52-\delta}} \lesssim \| \eta\|_{H^s}^{1-\theta}\| \eta\|_{H^{s+\f52}}^{\theta},
\end{equation*}
for some constant $\theta \in (0,1)$ depending on $\delta$ and $s$.
Using Young's inequality,  we then get
\begin{align*}
\f12\f{d}{dt}\|\eta_s\|^2_{L^2}
\leq-\f{1}{\mathcal{F}(\|\eta\|_{H^s})}\|\eta\|^2_{H^{s+\f52}}+\mathcal{F}(\|\eta\|_{H^s})\|\eta\|^2_{H^{s}},
\end{align*}
which implies that for $t\in(0,T]$
\begin{align*}
\f{d}{dt}\|\eta_s\|^2_{L^2}
\leq-\f{1}{\mathcal{F}\big(\|\eta\|_{Z^s(T)} \big)}\|\eta\|^2_{H^{s+\f52}}+\mathcal{F}\big(\|\eta\|_{Z^s(T)}\big)\|\eta\|^2_{H^{s}}.
\end{align*}
Hence, taking the time integral, for $t\in(0,T]$
\begin{align*}
\|\eta_s(t,\cdot)\|^2_{L^2}+\f{1}{\mathcal{F}\big(\|\eta\|_{Z^s(T)} \big)}\int_0^t\|\eta(s,\cdot)\|^2_{H^{s+\f52}}ds\leq \|\eta_s(0,\cdot)\|^2_{L^2}+T\mathcal{F}\big(\|\eta\|_{Z^s(T)}\big)\|\eta\|^2_{L^\infty_t H^{s}_x}.
\end{align*}
Since we have the norm equivalence
\begin{align*}
\f{1}{\mathcal{F}\big(\|\eta\|_{Z^s(T)} \big)}\|\eta(t,\cdot)\|_{H^s}\leq\|\eta_s(t,\cdot)\|_{L^2}\leq\mathcal{F}\big(\|\eta\|_{Z^s(T)}\big)\|\eta(t,\cdot)\|_{H^s},
\end{align*}
we then obtain that for $t\in(0,T]$,
\begin{align*}
\|\eta(t,\cdot)\|^2_{H^s}+\int_0^t\|\eta(s,\cdot)\|^2_{H^{s+\f52}}ds\leq \mathcal{F}\left(\|\eta\|_{Z^s(T)}\right)\left(\|\eta(0, \cdot)\|^2_{H^s}+T\mathcal{F}\big(\|\eta\|_{Z^s(T)}\big)\|\eta\|^2_{Z^s(T)}\right),
\end{align*}
which can be written as 
\begin{align*}
\|\eta\|_{Z^s(T)}^2\leq \mathcal{F}\left(\|\eta\|_{Z^s(T)}\right)\left(\|\eta(0, \cdot)\|_{H^s}^2+T\mathcal{F}\big(\|\eta\|_{Z^s(T)}\big)\|\eta\|^2_{Z^s(T)}\right).
\end{align*}
This leads to the a priori estimate \eqref{OneAprioriEst}.
\end{proof}

We also need an a priori estimate for the distance of the surface $\Sigma_t$ and the bottom $\Gamma^{-}$.
\begin{lemma} \label{t:DistBound}
Let $s\geq 2$, and $\eta \in Z^s(T)$ is a solution to \eqref{ParaOneMuskat} that satisfies \eqref{BottomH}.
Then there exist $\theta \in (0,1)$ and a function $\mathcal{F}: \R^+ \rightarrow \R^+$ depending only on $(h,s,\frac{\rho^- g}{\mu^-},\frac{\sigma}{\mu^-})$ such that
\begin{equation} \label{DistBound}
    \inf_{t\in[0,T]} \text{dist}(\eta(t), \Gamma^{-}) \geq \text{dist}(\eta(0), \Gamma^{-}) - T^\theta \mathcal{F}(\| \eta\|_{Z^s(T)}).
\end{equation}
In particular, for $T>0$ sufficiently small, the free surface $\Gamma_t$ remains uniformly separated from the rigid boundary $\Gamma^-$.
\end{lemma}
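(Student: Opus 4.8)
The plan is to reduce the statement to a bound on the vertical displacement $\|\eta(t,\cdot)-\eta(0,\cdot)\|_{L^\infty(\R)}$ and then to estimate that displacement directly from the equation \eqref{ParaOneMuskat}. The geometric reduction is soft: every point of $\Sigma_t$ lies within distance $\|\eta(t,\cdot)-\eta(0,\cdot)\|_{L^\infty}$ of $\Sigma_0$ along the vertical segment over the same base point, and conversely, so the Hausdorff distance satisfies $\mathrm{dist}_H(\Sigma_0,\Sigma_t)\le\|\eta(t,\cdot)-\eta(0,\cdot)\|_{L^\infty}$; combining this with the elementary inequality $\mathrm{dist}(A,C)\ge\mathrm{dist}(A,B)-\mathrm{dist}_H(B,C)$ applied with $A=\Gamma^-$, $B=\Sigma_0$, $C=\Sigma_t$ gives
\begin{equation*}
\mathrm{dist}(\eta(t),\Gamma^-)\ge\mathrm{dist}(\eta(0),\Gamma^-)-\|\eta(t,\cdot)-\eta(0,\cdot)\|_{L^\infty},\qquad t\in[0,T].
\end{equation*}
Thus it remains to prove $\sup_{t\in[0,T]}\|\eta(t,\cdot)-\eta(0,\cdot)\|_{L^\infty}\le T^\theta\mathcal{F}(\|\eta\|_{Z^s(T)})$ for some $\theta\in(0,1)$.

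Next I would locate $\p_t\eta$ in a low-regularity but time-integrable space. Since $\eta\in L^2([0,T];H^{s+\f52})$ and $|D|T_\ell$ is a paradifferential operator of order $5$ whose symbol bounds are controlled by $\|\eta\|_{H^s}$ (here $s\ge2$ is used so that $\eta_x\in C^\epsilon_*$; cf. \eqref{ellEst} and the paralinearization \eqref{ElasticPara}), one gets $|D|T_\ell\eta\in L^2([0,T];H^{s-\f52})$; together with the remainder estimate $\|K\|_{H^{s-\f52+\delta}}\lesssim_{\|\eta\|_{H^s}}\f{\sigma}{\mu^-}\|\eta\|_{H^{s+\f52}}+\f{\rho^-g}{\mu^-}\|\eta\|_{H^{s-\f32+\delta}}$ from the reformulation \eqref{ParaOneMuskat}, this yields
\begin{equation*}
\|\p_t\eta\|_{L^2([0,T];H^{s-\f52})}\le\mathcal{F}(\|\eta\|_{Z^s(T)}),
\end{equation*}
with $\mathcal{F}$ depending only on $(h,s,\f{\rho^-g}{\mu^-},\f{\sigma}{\mu^-})$, the dependence on $h$ entering through the Dirichlet--Neumann estimates behind $K$. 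Writing $\eta(t,\cdot)-\eta(0,\cdot)=\int_0^t\p_t\eta\,ds$ in $H^{s-\f52}$ and using Cauchy--Schwarz in time then gives $\|\eta(t)-\eta(0)\|_{H^{s-\f52}}\le t^{\f12}\mathcal{F}(\|\eta\|_{Z^s(T)})$, while trivially $\|\eta(t)-\eta(0)\|_{H^s}\le2\|\eta\|_{Z^s(T)}$.

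To finish I would interpolate these two bounds. If $s>3$ then $H^{s-\f52}\hookrightarrow L^\infty(\R)$ and the displacement bound follows at once with $\theta=\f12$. If $2\le s\le3$, fix a small $\epsilon>0$ and write $\f12+\epsilon=(1-\lambda)(s-\f52)+\lambda s$ with $\lambda=\lambda(s,\epsilon)\in(0,1)$; then by interpolation and the Sobolev embedding $H^{\f12+\epsilon}(\R)\hookrightarrow L^\infty(\R)$,
\begin{equation*}
\|\eta(t)-\eta(0)\|_{L^\infty}\lesssim\|\eta(t)-\eta(0)\|_{H^{s-\f52}}^{1-\lambda}\,\|\eta(t)-\eta(0)\|_{H^s}^{\lambda}\le t^{\f{1-\lambda}{2}}\mathcal{F}(\|\eta\|_{Z^s(T)}).
\end{equation*}
Setting $\theta:=\min\{\f12,\f{1-\lambda}{2}\}\in(0,1)$ and taking the infimum over $t\in[0,T]$ produces \eqref{DistBound}; the final assertion is then immediate, since $\mathrm{dist}(\eta(0),\Gamma^-)>0$ forces $\inf_{t\in[0,T]}\mathrm{dist}(\eta(t),\Gamma^-)>0$ as soon as $T^\theta\mathcal{F}(\|\eta\|_{Z^s(T)})$ is small. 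I expect the main (if mild) obstacle to be the regularity bookkeeping for $\p_t\eta$ when $s$ is close to $2$: there $H^{s-\f52}$ has negative order, so one really must interpolate against $\eta\in C([0,T];H^s)$ rather than embed the negative-order bound directly into $L^\infty$; the geometric reduction and the remaining paradifferential estimates are routine.
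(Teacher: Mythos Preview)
Your approach is correct and follows the same overall scheme as the paper: control $\partial_t\eta$ in a low-regularity Sobolev space in $L^2_t$, integrate with Cauchy--Schwarz to extract a power of $t$, interpolate against $\eta\in C_tH^s$, and finish by Sobolev embedding into $L^\infty$.

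The one difference worth noting is the choice of equation and target space. The paper works directly from the original form \eqref{OneMuskat} and uses the Dirichlet--Neumann bound \eqref{GfEtaf} (with $\mathfrak{s}=\tfrac12$) to place $\partial_t\eta$ in $H^{-1/2}$; you instead work from the paradifferential form \eqref{ParaOneMuskat} and land in $H^{s-5/2}$. Both are fine, but the paper's route is a touch more direct, and it sidesteps a soft spot in your write-up: your sentence ``$|D|T_\ell$ is a paradifferential operator of order $5$ whose symbol bounds are controlled by $\|\eta\|_{H^s}$'' is not quite right as stated, because the lower-order coefficients of $\ell$ in \eqref{EllDef} involve $\eta_{xx},\eta_{xxx},\dots$, which need not lie in $L^\infty$ when $s$ is close to $2$; also \eqref{ellEst} is recorded only at index $s$ and carries $\|\eta\|_{H^{s+4}}$ on the right, not $\|\eta\|_{H^{s+5/2}}$. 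The bound you actually need, $\|T_\ell\eta\|_{H^{s-3/2}}\lesssim_{\|\eta\|_{H^s}}\|\eta\|_{H^{s+5/2}}$, is still true---for instance via $T_\ell\eta=\mathbf{E}(\eta)-R_E(\eta)$ together with \eqref{REEtaEst} and a direct product/Moser estimate on $\mathbf{E}(\eta)$---but it is not a one-line consequence of ``order $5$''. Either fill that in, or (more simply) switch to \eqref{OneMuskat} and invoke \eqref{GfEtaf} as the paper does.
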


\begin{proof}
Using the equation \eqref{OneMuskat}, the estimate \eqref{GfEtaf} and the fact that $s+\f52>\f{9}{2}$, we estimate
 \begin{align*}
 &\|\eta(t) - \eta(0)\|_{H^{-\f12}}\leq \int_0^t \frac{\sigma}{\mu^{-}} \|G^{-}(\eta)H(\eta)(\tau)\|_{H^{-\f12}} + \frac{\rho^{-}g}{\mu^{-}} \|G^{-}(\eta)\eta(\tau)\|_{H^{-\f12}} d\tau \\
 \lesssim&  \int_0^t \mathcal{F}(\|\eta\|_{H^s}) \| \eta\|_{H^\f92}d\tau \leq t^\f12 \mathcal{F}\big(\|\eta\|_{L^\infty([0,t];H^s)}\big)\|\eta\|_{L^2([0,t];H^{s+\f52})}.
 \end{align*}
 For any constant $\nu \in (2, s)$, we use interpolation of Sobolev spaces:
\begin{equation*}
 \|\eta(t) - \eta(0)\|_{H^\nu} \leq \|\eta(t) - \eta(0)\|_{H^{-\f12}}^\theta \|\eta(t) - \eta(0)\|_{H^{s}}^{1-\theta} \leq t^{\frac{\theta}{2}} \mathcal{F}\big(\|\eta\|_{Z^s(t)}\big),
\end{equation*}
for some constant $\theta \in (0,1)$ that depends on $s$ and $\nu$.
Using the Sobolev embedding $H^\nu(\R) \subset L^\infty(\R)$, we obtain the estimate \eqref{DistBound}.
\end{proof}

\subsection{The two-phase case}
For two-phase Muskat problem, we first recall a well-posedness result of \eqref{TwoMuskatTwo} from Proposition $4.1$ in \cite{MR2812947}.
\begin{lemma} [\hspace{1sp}\cite{MR2812947}] \label{t:fpmHHalfEst}
Let $\eta \in W^{1,\infty}(\R)\cap H^\f12(\R)$ satisfy dist$(\eta, \Gamma^\pm) >h>0$.
Then there exists a unique variational solution $f^\pm \in \tilde{H}^\f12_\pm (\R)$ to the system \eqref{TwoMuskatTwo}.
Moreover, $f^\pm$ satisfy
\begin{equation*} 
 \|f^\pm\|_{\tilde{H}^\f12_\pm} \leq C(1+\|\eta\|_{W^{1,\infty}})^2 \| \sigma \mathbf{E}(\eta) + g(\rho^- - \rho^+)\eta\|_{H^\f12},
\end{equation*}
where the constant $C$ depends only on $(h, \mu^\pm)$.
\end{lemma}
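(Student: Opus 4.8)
The statement to be proved (Lemma~\ref{t:fpmHHalfEst}) is a well-posedness result for the linear-looking elliptic system \eqref{TwoMuskatTwo}, quoted from \cite{MR2812947}; the plan is to reduce it to a coercive variational problem on a suitable energy space and invoke Lax--Milgram. The first step is to make the unknown symmetric: writing $h := \sigma\mathbf{E}(\eta)+g(\rho^--\rho^+)\eta$, the jump condition $f^--f^+=h$ lets me eliminate, say, $f^+=f^--h$, so that the transmission condition $\frac{1}{\mu^+}G^+(\eta)f^+=\frac1{\mu^-}G^-(\eta)f^-$ becomes a single equation for $f^-$:
\begin{equation*}
 \Big(\tfrac{1}{\mu^-}G^-(\eta)+\tfrac{1}{\mu^+}G^+(\eta)\Big)f^- = \tfrac{1}{\mu^+}G^+(\eta)h.
\end{equation*}
The operator on the left is the natural candidate for the bilinear form.

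The second step is to set up the variational framework. The Dirichlet--Neumann operators $G^\pm(\eta)$ are, up to the Jacobian factor $\sqrt{1+\eta_x^2}$, nonnegative self-adjoint operators: for $f$ on $\Sigma$ one has $(G^\pm(\eta)f,f)_{L^2}=\iint_{\Omega^\pm}|\nabla_{x,y}\phi^\pm|^2\,dx\,dy\geq 0$, with the Neumann condition on $\Gamma^\pm$ built into the harmonic extension $\phi^\pm$. Hence the form $a(f,g):=(\frac1{\mu^-}G^-(\eta)f+\frac1{\mu^+}G^+(\eta)f,\,g)_{L^2}$ is symmetric and nonnegative. The correct energy space is the homogeneous $\dot H^{1/2}$-type space on which $G^\pm(\eta)$ is comparable to $|D|^{1/2}$ from both sides; this is precisely the role of $\tilde H^{1/2}_\pm(\R)$ in the statement (the quotient/homogeneous space adapted to the two geometries, which is why the conclusion is phrased with these norms). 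Continuity of $a$ on $\tilde H^{1/2}_\pm$ follows from the boundedness $G^\pm(\eta):H^{1/2}\to H^{-1/2}$ with operator norm controlled by $(1+\|\eta\|_{W^{1,\infty}})$-type factors (these are the standard DN estimates collected in Appendix~\ref{s:DNOperator}); the right-hand side $\frac1{\mu^+}G^+(\eta)h\in H^{-1/2}$ defines a bounded functional with norm $\lesssim(1+\|\eta\|_{W^{1,\infty}})\|h\|_{H^{1/2}}$.

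The third step is coercivity, which is the crux. I need $a(f,f)\gtrsim \|f\|_{\tilde H^{1/2}}^2$. This is where the positive-distance hypothesis $\mathrm{dist}(\eta,\Gamma^\pm)>h>0$ and $\eta\in W^{1,\infty}$ enter decisively: they guarantee the lower bound $(G^\pm(\eta)f,f)_{L^2}\geq c(h,\|\eta\|_{W^{1,\infty}})\|f\|_{\dot H^{1/2}}^2$, obtained by trapping a flat strip of height $h$ inside $\Omega^\pm$ and comparing harmonic extensions (a Poincar\'e-type argument in the strip, or equivalently the spectral comparison $G^\pm(\eta)\gtrsim |D|\tanh(h|D|)\gtrsim$ const$\cdot|D|$ on the relevant frequency range, with the low frequencies handled by the homogeneous quotient). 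Since at least one of $\mu^\pm$ is positive (in fact both, in the two-phase case under consideration), summing gives coercivity of $a$ on $\tilde H^{1/2}_\pm$. I expect this lower bound on the DN operator --- uniform in the admissible class of interfaces --- to be the main obstacle, or rather the step that requires the most care, though it is by now a standard tool (it is essentially the content of the DN lemmas invoked elsewhere in the paper).

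The final step is bookkeeping: Lax--Milgram (or the Riesz representation theorem, since $a$ is symmetric) yields a unique $f^-\in\tilde H^{1/2}_-$, whence $f^+:=f^--h\in\tilde H^{1/2}_+$, and both solve \eqref{TwoMuskatTwo}; the quantitative bound $\|f^\pm\|_{\tilde H^{1/2}_\pm}\leq C(1+\|\eta\|_{W^{1,\infty}})^2\|h\|_{H^{1/2}}$ comes from dividing the continuity bound on the right-hand side by the coercivity constant (one power of $(1+\|\eta\|_{W^{1,\infty}})$ from the RHS operator, one more from the reciprocal of the coercivity constant), with $C$ depending only on $h$ and $\mu^\pm$ as claimed. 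Uniqueness within the variational class is immediate from coercivity. Since the statement is quoted verbatim from Proposition~4.1 of \cite{MR2812947} --- with $\sigma\mathbf{E}(\eta)+g(\rho^--\rho^+)\eta$ in place of the capillary forcing, which changes nothing in this elliptic argument --- I would in practice simply cite that proposition and remark that the forcing term enters only through its $H^{1/2}$ norm.
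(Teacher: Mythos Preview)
The paper does not supply its own proof of this lemma: it is simply quoted from the literature (stated as ``Proposition~4.1 in \cite{MR2812947}''; this citation is almost certainly a typo for \cite{MR4131404}, Nguyen's Muskat-with-surface-tension paper, since the companion Lemma immediately below is attributed to ``Proposition~4.2'' of the same reference). Your concluding remark --- that in practice one just cites the proposition, noting that the forcing enters only through its $H^{1/2}$ norm --- is therefore exactly what the paper does.

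Your Lax--Milgram reconstruction is the right idea, but there is a sign slip worth flagging. In this paper's convention (see \eqref{GfDRf}) both $G^\pm(\eta)$ are computed with the \emph{upward} unit normal $n$, so that $G^-(\eta)\approx |D|$ is nonnegative while $G^+(\eta)\approx -|D|$ is nonpositive; in particular $(G^+(\eta)f,f)_{L^2}=-\iint_{\Omega^+}|\nabla\phi^+|^2\le 0$, not $\ge 0$ as you wrote. Redoing your elimination with $f^+=f^--h$ gives
\[
\Big(\tfrac{1}{\mu^-}G^-(\eta)-\tfrac{1}{\mu^+}G^+(\eta)\Big)f^- \;=\; -\tfrac{1}{\mu^+}G^+(\eta)h,
\]
and it is the bilinear form $a(f,g)=\big(\tfrac{1}{\mu^-}G^-(\eta)f-\tfrac{1}{\mu^+}G^+(\eta)f,\,g\big)_{L^2}$ that is symmetric and coercive on $\tilde H^{1/2}$. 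With this correction your argument goes through unchanged: the continuity and coercivity bounds you describe (each costing a factor $(1+\|\eta\|_{W^{1,\infty}})$, yielding the square in the final estimate) are indeed the mechanism behind the quoted result in \cite{MR4131404}.
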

Using the estimate \eqref{ellEst}, the above estimate is reduced to
\begin{equation} \label{fpmEst}
 \|f^\pm\|_{\tilde{H}^\f12_\pm} \lesssim_{\|\eta\|_{W^{1,\infty}}}  \sigma \|\eta\|_{H^{\f92}} + g(\rho^- - \rho^+)\|\eta\|_{H^\f12}.
\end{equation}

For the profile $\eta$ with higher Sobolev regularity, we change the estimate for the capillary term to the elastic term in Proposition $4.2$ in \cite{MR2812947}, and obtain the following result.
\begin{lemma} [\hspace{1sp}\cite{MR2812947}]
Let $f^\pm$ be the solution as given by Lemma \ref{t:fpmHHalfEst}.
If $\eta \in H^{s+\f52}$ with $s\geq 2$ then $f^\pm \in \tilde{H}^{s-\f12}_{\pm}(\R)$, and 
\begin{equation} \label{fpmEstR}
 \|f^\pm\|_{\tilde{H}^r_\pm} \lesssim_{\|\eta\|_{H^s}}  \sigma \|\eta\|_{H^{r+4}} + g(\rho^- - \rho^+)\|\eta\|_{H^r},
\end{equation}
for all $r\in [\f12, s-\f12]$, where the function $\mathcal{F}$ depends only on $(h,s,r, \mu^\pm)$.
\end{lemma}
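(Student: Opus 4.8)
The plan is to follow the proof of Proposition~$4.2$ in \cite{MR2812947}, substituting the elastic operator $\mathbf{E}(\eta)$ for the mean-curvature operator throughout. Since $\mathbf{E}(\eta)$ is fourth order while the curvature is second order, every regularity index in that argument moves by two, which accounts for $\|\eta\|_{H^{r+4}}$ rather than $\|\eta\|_{H^{r+2}}$ in \eqref{fpmEstR}; the only quantitative input that changes is $\|\mathbf{E}(\eta)\|_{H^{\mu}} \lesssim_{\|\eta\|_{H^s}} \|\eta\|_{H^{\mu+4}}$, which follows from the paralinearization \eqref{ElasticPara} together with \eqref{REEtaEst} and \eqref{ellEst}.

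First I would decouple the transmission system \eqref{TwoMuskatTwo}. With $\mathfrak{g} := \sigma \mathbf{E}(\eta) + g(\rho^- - \rho^+)\eta$, the first relation gives $f^+ = f^- - \mathfrak{g}$, and inserting this into the second yields the single elliptic equation
\begin{equation*}
\Lambda(\eta) f^- = \tfrac{1}{\mu^+}\bigl(-G^+(\eta)\bigr)\mathfrak{g}, \qquad \Lambda(\eta) := \tfrac{1}{\mu^-}G^-(\eta) + \tfrac{1}{\mu^+}\bigl(-G^+(\eta)\bigr),
\end{equation*}
where $G^-(\eta)$ and $-G^+(\eta)$ are the Dirichlet--Neumann operators of $\Omega_t^-$ and $\Omega_t^+$ taken with their outward normals. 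Both are nonnegative and first-order elliptic, so $\Lambda(\eta)$ is positive and first-order elliptic, with quadratic form coercive on $\dot{H}^{1/2}$ modulo constants; this coercivity is what underlies Lemma~\ref{t:fpmHHalfEst}, which I would use as the base case, giving the unique solution $f^\pm \in \tilde{H}^{1/2}_\pm$ and the bound \eqref{fpmEst}, i.e.\ \eqref{fpmEstR} at $r = 1/2$. Since $f^+ = f^- - \mathfrak{g}$ with $\mathfrak{g} \in \tilde{H}^r$ by \eqref{ElasticPara}, any bound on $f^-$ gives the same bound on $f^+$, so it suffices to estimate $f^-$.

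Next I would bootstrap in the scale $\tilde{H}^r_\pm$. Fix $\delta \in (0, 1/2]$ and write $\Lambda(\eta) = (\tfrac{1}{\mu^-} + \tfrac{1}{\mu^+})|D| + \bigl(\tfrac{1}{\mu^-}R^-(\eta) - \tfrac{1}{\mu^+}R^+(\eta)\bigr)$, where by \eqref{GfDRf} the Dirichlet--Neumann remainders $R^\pm(\eta) = G^\pm(\eta) \mp |D|$ map $H^{\rho}$ into $H^{\rho-1+\delta}$ with norm $\lesssim_{\|\eta\|_{H^s}} 1$. The equation for $f^-$ becomes
\begin{equation*}
f^- = \Bigl(\tfrac{1}{\mu^-} + \tfrac{1}{\mu^+}\Bigr)^{-1} |D|^{-1}\Bigl[\tfrac{1}{\mu^+}\bigl(-G^+(\eta)\bigr)\mathfrak{g} - \bigl(\tfrac{1}{\mu^-}R^-(\eta) - \tfrac{1}{\mu^+}R^+(\eta)\bigr) f^-\Bigr].
\end{equation*}
Assume $f^- \in \tilde{H}^{\rho}_-$ and $\eta \in H^{r+4}$, so $\mathfrak{g} \in \tilde{H}^r$ by \eqref{ElasticPara} and $H^{r+4} \subset H^r$. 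By \eqref{GfEtaf}, available since $r \ge 1/2$, the term $-G^+(\eta)\mathfrak{g}$ lies in $\tilde{H}^{r-1}$ and $|D|^{-1}$ restores one derivative; the remainder term lies in $\tilde{H}^{\rho - 1 + \delta}$ and again $|D|^{-1}$ gains one. Hence the right-hand side lies in $\tilde{H}^{\min(r,\rho + \delta)}_-$, so each application improves the regularity of $f^-$ by $\delta$. Iterating finitely many times, with the implicit constant staying of the form $\mathcal{F}(\|\eta\|_{H^s})$ (depending also on $h, s, r, \mu^\pm$), one reaches $f^- \in \tilde{H}^r_-$, and collecting the estimates with the base bound gives
\begin{equation*}
\|f^-\|_{\tilde{H}^r_-} \lesssim_{\|\eta\|_{H^s}} \|\mathfrak{g}\|_{\tilde{H}^r} + \|f^-\|_{\tilde{H}^{1/2}_-} \lesssim_{\|\eta\|_{H^s}} \sigma \|\eta\|_{H^{r+4}} + g(\rho^- - \rho^+)\|\eta\|_{H^r}.
\end{equation*}

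The substitution itself is routine: wherever \cite{MR2812947} invokes $\|\mathbf{E}_{\mathrm{cap}}(\eta)\|_{H^{\mu}} \lesssim \|\eta\|_{H^{\mu+2}}$, one inserts $\|\mathbf{E}(\eta)\|_{H^{\mu}} \lesssim_{\|\eta\|_{H^s}} \|\eta\|_{H^{\mu+4}}$ and shifts the index. The step needing genuine care is the bootstrap: it closes only because the Dirichlet--Neumann remainders $R^\pm(\eta)$ have order strictly below one (order $1-\delta$), so each iteration truly gains $\delta$ derivatives rather than merely reproducing $|D|$; and all low-frequency bookkeeping must be carried out in the spaces $\tilde{H}^r_\pm$ — built to be compatible with the inversion of $|D|$ and with the rigid boundaries $\Gamma^\pm$ — rather than in ordinary Sobolev spaces, so that $|D|^{-1}$ and $\Lambda(\eta)^{-1}$ act boundedly at every stage.
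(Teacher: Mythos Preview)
Your proposal is correct and follows the same approach as the paper, which simply refers to Proposition~4.2 of the cited work and says to replace the capillary operator by the elastic one; your bootstrap via $\Lambda(\eta) = (\tfrac{1}{\mu^-}+\tfrac{1}{\mu^+})|D| + \text{(order $1-\delta$ remainder)}$ starting from the $\tilde{H}^{1/2}$ base case is exactly the mechanism behind that proposition. One small slip: your displayed definition $R^\pm(\eta) = G^\pm(\eta) \mp |D|$ has the signs reversed relative to the paper's convention $G^\pm(\eta) = \mp|D| + R^\pm(\eta)$, though your decomposition of $\Lambda(\eta)$ is nonetheless written correctly.
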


Then we rewrite the two-phase Muskat problem \eqref{TwoMuskatOne}-\eqref{TwoMuskatTwo} in the paradifferential form.
\begin{proposition}
 For $\delta \in (0,\f12]$, the two-phase Muskat problem can be written as
 \begin{equation} \label{ParaTwoMuskat}
    \p_t \eta = -\frac{\sigma}{\mu^+ +\mu^{-}} |D|T_\ell \eta + K,
 \end{equation}
 where the remainder term $K$ satisfies the estimate
\begin{equation*}
    \|K\|_{H^{s-\f52+\delta}} \lesssim_{\|\eta\|_{H^s}} \sigma  \|\eta\|_{H^{s+\f52}} + (\rho^- - \rho^+)g\| \eta\|_{H^{s-\f32+\delta}}.
\end{equation*}
\end{proposition}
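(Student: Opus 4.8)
The plan is to reformulate the two-phase system exactly as in the one-phase case, isolating the fifth-order parabolic leading term and absorbing everything else into $K$. The key structural fact is the elliptic solve in \eqref{TwoMuskatTwo}: from the second equation, $\frac{1}{\mu^+}G^+(\eta)f^+ = \frac{1}{\mu^-}G^-(\eta)f^-$, and from the first, $f^- = f^+ + \sigma\mathbf{E}(\eta) + g(\rho^- - \rho^+)\eta$. I would first use the decomposition $G^\pm(\eta) = |D| + R^\pm(\eta)$ from \eqref{GfDRf}. Writing $F := \sigma\mathbf{E}(\eta) + g(\rho^--\rho^+)\eta$, the coupling becomes
\begin{equation*}
 \frac{1}{\mu^+}\bigl(|D| + R^+(\eta)\bigr)f^+ = \frac{1}{\mu^-}\bigl(|D| + R^-(\eta)\bigr)(f^+ + F),
\end{equation*}
so that, at leading order, $\bigl(\tfrac{1}{\mu^+} - \tfrac{1}{\mu^-}\bigr)|D|f^+ = \tfrac{1}{\mu^-}|D|F + (\text{lower order})$. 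Solving for the leading contribution of $f^+$ gives $f^+ = -\tfrac{\mu^+}{\mu^--\mu^+}F + (\text{lower order})$... but the cleaner route, and the one that yields the stated coefficient $\tfrac{\sigma}{\mu^++\mu^-}$, is to observe that $\p_t\eta = -\tfrac{1}{\mu^-}G^-(\eta)f^- = -\tfrac{1}{\mu^-}|D|f^- + R^-(\eta)f^-$, and to compute $|D|f^-$ at leading order. From the two coupling relations one finds $f^- = \tfrac{\mu^-}{\mu^++\mu^-}F + (\text{lower order})$ after matching the two expressions $|D|f^\pm$; hence $\p_t\eta = -\tfrac{1}{\mu^++\mu^-}|D|F + K = -\tfrac{\sigma}{\mu^++\mu^-}|D|\mathbf{E}(\eta) + K'$, and finally replacing $|D|\mathbf{E}(\eta)$ by $|D|T_\ell\eta$ using \eqref{ElasticPara} produces \eqref{ParaTwoMuskat}.

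The main steps, in order, are: (1) Feed the regularity $f^\pm \in \tilde H^{s-\f12}_\pm$ from \eqref{fpmEstR} (with $r = s-\f12$) into the remainder bounds \eqref{GfDRf} for $R^\pm(\eta)$, obtaining $\|R^\pm(\eta)f^\pm\|_{H^{s-\f52+\delta}} \lesssim_{\|\eta\|_{H^s}} \|f^\pm\|_{H^{s-\f32}} \lesssim_{\|\eta\|_{H^s}} \sigma\|\eta\|_{H^{s+\f52}} + (\rho^--\rho^+)g\|\eta\|_{H^{s-\f32+\delta}}$, where \eqref{fpmEstR} is used again with $r = s-\f32+\delta \in [\f12, s-\f12]$ (valid since $\delta \le \f12$). (2) Extract the leading symbol of $f^-$ by solving the paradifferential version of the elliptic coupling: write $f^\pm = \tfrac{c^\pm \mu^\pm}{\mu^++\mu^-}F + g^\pm$ with the correct constants $c^\pm$ determined by $\tfrac{1}{\mu^+}|D|f^+ = \tfrac{1}{\mu^-}|D|(f^+ + F)$ at top order, and show the correction $g^\pm$ lies in $\tilde H^{s-\f12}_\pm$ with the right remainder bound — this uses invertibility of $|D|$ acting between the appropriate homogeneous spaces together with \eqref{GfDRf}. (3) Combine: $\p_t\eta = -\tfrac{1}{\mu^-}|D|f^- - R^-(\eta)f^- = -\tfrac{1}{\mu^++\mu^-}|D|F + (\text{remainder})$, then use \eqref{GfEtaf} or \eqref{ellEst} for the $|D|\mathbf{E}(\eta) - |D|T_\ell\eta$ difference and \eqref{REEtaEst} for $R_E(\eta)$, and finally the estimate $\||D|(G^\pm(\eta)\eta - |D|\eta)\|$-type bound for the gravity piece $|D|(g(\rho^--\rho^+)\eta)$, which contributes $(\rho^--\rho^+)g\|\eta\|_{H^{s-\f32+\delta}}$. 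All pieces not proportional to the $|D|T_\ell\eta$ top-order term are collected into $K$, and the displayed bound for $\|K\|_{H^{s-\f52+\delta}}$ follows by summing the contributions from steps (1)–(3).

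The main obstacle I expect is step (2): correctly identifying the leading-order symbol of $f^-$ so that the coefficient comes out to $\tfrac{\sigma}{\mu^++\mu^-}$ rather than something involving $\mu^--\mu^+$, and controlling the error term $g^-$ in $\tilde H^{s-\f12}_\pm$. The subtlety is that $|D|$ is not invertible on $L^2(\R)$ (it has a kernel/zero mode at low frequency), so one must either work in the homogeneous/quotient spaces $\tilde H^r_\pm$ as set up in Lemma \ref{t:fpmHHalfEst} and the surrounding discussion, or restrict attention to the high-frequency part where the symbolic calculus of \eqref{CompositionPara} applies cleanly and handle low frequencies separately (where all the operators are bounded and harmless). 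Since \eqref{fpmEstR} already furnishes $f^\pm$ in the correct spaces, the right formulation is to paralinearize the coupling equation, apply the parametrix for $T_{|\xi|}$ up to a smoothing remainder, and track the algebra; the coefficient $\tfrac{1}{\mu^++\mu^-}$ emerges from $\tfrac{1}{\mu^-}\cdot\tfrac{\mu^-\mu^+}{\mu^-(\mu^++\mu^-)}\cdot\tfrac{\mu^-+\mu^+}{\mu^+} = \tfrac{1}{\mu^++\mu^-}$ after the dust settles. Once the leading term is pinned down, the remainder estimate is entirely routine, mirroring the one-phase proposition, using \eqref{fpmEstR}, \eqref{GfDRf}, \eqref{ElasticPara}, and \eqref{REEtaEst}.
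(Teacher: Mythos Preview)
Your overall strategy matches the paper's, but there is a concrete error that is the source of the difficulty you flag in step~(2). You write $G^\pm(\eta) = |D| + R^\pm(\eta)$ for both operators; in fact only $G^-$ has principal part $+|D|$. Since both $G^\pm$ in Proposition~\ref{t:Formuation} are defined via the \emph{upward} normal $n$, which is the outward normal for $\Omega^-$ but the inward normal for $\Omega^+$, the correct two-sided version of \eqref{GfDRf} is
\[
G^\pm(\eta)f^\pm \;=\; \mp\,|D|f^\pm + R^\pm(\eta)f^\pm .
\]
This is exactly why your first computation produced a $\mu^- - \mu^+$ denominator (which blows up when $\mu^+=\mu^-$): the sign on $G^+$ is wrong. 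With the correct sign, the coupling $\tfrac{1}{\mu^+}G^+(\eta)f^+ = \tfrac{1}{\mu^-}G^-(\eta)f^-$ together with $f^+ = f^- - F$ gives, after two lines of algebra,
\[
|D|f^- \;=\; \frac{\mu^-}{\mu^++\mu^-}\,|D|F \;+\; \frac{\mu^-}{\mu^++\mu^-}R^+(\eta)f^+ \;-\; \frac{\mu^+}{\mu^++\mu^-}R^-(\eta)f^- ,
\]
an exact pointwise identity --- no inversion of $|D|$, no parametrix, and no separate low-frequency argument is needed, so your concern about the zero mode disappears. Plugging this into $\p_t\eta = -\tfrac{1}{\mu^-}\bigl(|D|f^- + R^-(\eta)f^-\bigr)$ yields the coefficient $\tfrac{1}{\mu^++\mu^-}$ immediately. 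Your steps (1) and (3) are correct and mirror the paper (the right indices are $\mathfrak{s}=s-\tfrac32$ in \eqref{GfDRf} and $r=s-\tfrac32$ in \eqref{fpmEstR}); once you fix the sign on $G^+$, step~(2) collapses to the displayed algebra above and the proof is complete.
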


\begin{proof}
According to the estimate \eqref{GfDRf}, for $\delta \in (0,\f12]$, and $\mathfrak{s} \in [\f12, s-\delta]$, there exists some function $\mathcal{F}:\R^+ \rightarrow \R^+$ depending only on $(s,\mathfrak{s}, \delta, h)$ such that
\begin{equation*} 
G^\pm(\eta)f^\pm = \mp|D|f^\pm + R^\pm(\eta)f^\pm, \quad \| R^\pm(\eta)f^\pm\|_{H^{\mathfrak{s}- 1+\delta}} \leq \mathcal{F}(\|\eta\|_{H^s}) \|f\|_{\tilde{H}^\mathfrak{s}_-}.
\end{equation*}
 We then write the system \eqref{TwoMuskatTwo} as 
\begin{align*}
 |D|f^- &= \frac{\mu^-}{\mu^+ + \mu^-} |D|(\sigma \mathbf{E}(\eta)+ (\rho^- - \rho^+)g\eta) + \frac{\mu^-}{\mu^+ + \mu^-}R^+(\eta)f^+ - \frac{\mu^+}{\mu^+ + \mu^-}R^-(\eta)f^- \\
 &= \frac{\sigma\mu^-}{\mu^+ + \mu^-} |D|T_\ell \eta + \frac{\sigma\mu^-}{\mu^+ + \mu^-} |D|(\mathbf{E}(\eta)-T_\ell \eta) + \frac{\mu^-  (\rho^- - \rho^+)g}{\mu^+ + \mu^-} |D|\eta\\
 &+ \frac{\mu^-}{\mu^+ + \mu^-}R^+(\eta)f^+ - \frac{\mu^+}{\mu^+ + \mu^-}R^-(\eta)f^-.
\end{align*}
We then estimate using \eqref{ellEst} and \eqref{fpmEstR},
\begin{align*}
 &\| |D|(\mathbf{E}(\eta)-T_\ell \eta)\|_{H^{s-\f52 + \delta}} \lesssim_{\|\eta\|_{H^s}} \|\eta\|_{H^{s+\f32}}, \\
 & \||D|\eta \|_{H^{s-\f52 + \delta}} \lesssim_{\|\eta\|_{H^s}} \|\eta\|_{H^{s-\f32+\delta}}, \\
 & \|R^\pm(\eta) f^\pm\|_{H^{s-\f52+\delta}}\lesssim_{\|\eta\|_{H^s}}\|f^\pm\|_{\tilde{H}_\pm^{s-\f32}} \lesssim_{\|\eta\|_{H^s}}  \sigma \|\eta\|_{H^{s+\f52}} + g(\rho^- - \rho^+)\|\eta\|_{H^{s-\f32}}.
\end{align*}
Hence, we obtain that
\begin{equation*}
|D|f^- = \frac{\sigma\mu^-}{\mu^+ + \mu^-} |D|T_\ell \eta  + K.
\end{equation*}
Using the equation \eqref{TwoMuskatOne}, we get
\begin{equation*}
\p_t \eta = -\frac{1}{\mu^-}|D|f^- -\frac{1}{\mu^-}R^-(\eta)f^- = -\frac{\sigma}{\mu^+ +\mu^{-}} |D|T_\ell \eta + K,
\end{equation*}
which gives the reformulation of the two-phase Muskat problem \eqref{ParaTwoMuskat}.
\end{proof}

Similar to the one-phase problem, as long as $\eta \in \dot{W}^{1,\infty}$, the two-phase Muskat problem \eqref{ParaTwoMuskat} is a fifth-order quasilinear parabolic equation.
Since the paradifferential form of the two-phase Muskat problem \eqref{ParaTwoMuskat} is similar to the paradifferential form of the one-phase Muskat problem \eqref{ParaOneMuskat}, we follow the computation in Proposition \ref{t:OnePhaseEst} and \ref{t:DistBound}, and obtain the a priori estimate for the two-phase Muskat problem.
\begin{proposition} \label{t:TwoPhaseEst}
Let $s\geq 2$, and suppose that $\eta$ solves the paradifferntial two-phase Muskat problem \eqref{ParaTwoMuskat} on $[0,T]$, and satisfies the separation condition \eqref{BottomH}.
Then there exist $\theta\in (0,1)$ depending only on $s$, and a function $\mathcal{F}: \R^+ \rightarrow \R^+$ depending only on $(h,s, \sigma, \mu^\pm, (\rho^- -\rho^+)g)$ such that
\begin{equation*} 
  \| \eta\|_{Z^s(T)} \lesssim \mathcal{F}\big(\|\eta(0,\cdot)\|_{H^s} + T^\f12\mathcal{F}( \| \eta\|_{Z^s(T)})\big).
\end{equation*}
Moreover, 
\begin{equation*} 
    \inf_{t\in[0,T]} \text{dist}(\eta(t), \Gamma^{-}) \geq \text{dist}(\eta(0), \Gamma^{-}) - T^\theta \mathcal{F}\big(\| \eta\|_{Z^s(T)}\big).
\end{equation*}
\end{proposition}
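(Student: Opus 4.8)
The plan is to exploit the fact that the paradifferential equation \eqref{ParaTwoMuskat} for the two-phase problem has exactly the same structure as \eqref{ParaOneMuskat} for the one-phase problem: the same elliptic fifth-order operator $|D|T_\ell$, the parabolic coefficient $\f{\sigma}{\mu^-}$ replaced by $\f{\sigma}{\mu^+ + \mu^-}>0$, and a remainder $K$ obeying the analogous bound $\|K\|_{H^{s-\f52+\delta}}\lesssim_{\|\eta\|_{H^s}}\sigma\|\eta\|_{H^{s+\f52}} + (\rho^- - \rho^+)g\|\eta\|_{H^{s-\f32+\delta}}$. Since the symbol $|\xi|\ell(x,\xi)$ is unchanged and its principal part $(1+\eta_x^2)^{-\f52}|\xi|^5$ is strictly elliptic whenever $\eta\in\dot W^{1,\infty}$, the energy computation of Proposition \ref{t:OnePhaseEst} transcribes with no change. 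Concretely, I would conjugate \eqref{ParaTwoMuskat} by $\langle T_{(1+\eta_x^2)^{-s/2}}D\rangle^s$, set $\eta_s := \langle T_{(1+\eta_x^2)^{-s/2}}D\rangle^s\eta$, and differentiate $\f12\|\eta_s\|_{L^2}^2$ in time, producing the principal term $-\f{\sigma}{\mu^+ + \mu^-}(|D|T_\ell\eta_s,\eta_s)_{L^2}$, a commutator term, and a term involving $\langle T_{(1+\eta_x^2)^{-s/2}}D\rangle^sK$.

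For the principal term I would split $|D|T_\ell = T_{m^{[5]}} + T_{m^{[\le 4]}}$ with $m^{[5]} = (1+\eta_x^2)^{-\f52}|\xi|^5$, use the approximate square root $T_{\sqrt{m^{[5]}}}$ and the paradifferential calculus \eqref{TABound}, \eqref{CompositionPara}, \eqref{AdjointBound} exactly as in Proposition \ref{t:OnePhaseEst} to obtain $-(|D|T_\ell\eta_s,\eta_s)_{L^2}\le -\f{1}{\mathcal{F}(\|\eta\|_{H^s})}\|\eta\|_{H^{s+\f52}}^2 + \mathcal{F}(\|\eta\|_{H^s})\|\eta\|_{H^{s+\f52}}\|\eta\|_{H^{s+\f52-\delta}}$. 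The commutator and $K$ terms are each bounded by $\mathcal{F}(\|\eta\|_{H^s})\|\eta\|_{H^{s+\f52}}\|\eta\|_{H^{s+\f52-\delta}}$ (with the constant now carrying $\sigma$ and $(\rho^- - \rho^+)g$), via the symbolic calculus for the commutator and the $K$-estimate above. Interpolation $\|\eta\|_{H^{s+\f52-\delta}}\lesssim\|\eta\|_{H^s}^{1-\theta}\|\eta\|_{H^{s+\f52}}^\theta$ together with Young's inequality absorbs the cross terms into the dissipation, giving $\f{d}{dt}\|\eta_s\|_{L^2}^2\le -\f{1}{\mathcal{F}}\|\eta\|_{H^{s+\f52}}^2 + \mathcal{F}\|\eta\|_{H^s}^2$ with $\mathcal{F}=\mathcal{F}(\|\eta\|_{Z^s(T)})$; integrating in time and using the norm equivalence $\|\eta_s\|_{L^2}\simeq_{\mathcal{F}}\|\eta\|_{H^s}$ yields the asserted $Z^s(T)$-bound.

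For the distance estimate I would mimic Lemma \ref{t:DistBound}. The one structural difference is that $\p_t\eta = -\f{1}{\mu^-}G^-(\eta)f^-$ involves the auxiliary unknown $f^-$ rather than an explicit function of $\eta$; here I invoke the elliptic solvability of \eqref{TwoMuskatTwo}, i.e. the bounds \eqref{fpmEst} and \eqref{fpmEstR}, to get $\|G^-(\eta)f^-\|_{H^{-1/2}}\lesssim\|f^-\|_{\tilde H^{1/2}_-}\lesssim_{\|\eta\|_{H^s}}\sigma\|\eta\|_{H^{\f92}} + (\rho^- - \rho^+)g\|\eta\|_{H^{1/2}}$, and since $s+\f52>\f92$ this is integrable in time with a $t^{1/2}$ gain from Cauchy--Schwarz against $\|\eta\|_{L^2([0,t];H^{s+\f52})}$. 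Interpolating $\|\eta(t)-\eta(0)\|_{H^\nu}$ for $\nu\in(2,s)$ between $H^{-1/2}$ and $H^s$ and using $H^\nu(\R)\hookrightarrow L^\infty(\R)$ produces $\|\eta(t)-\eta(0)\|_{L^\infty}\le t^{\theta}\mathcal{F}(\|\eta\|_{Z^s(T)})$ for some $\theta\in(0,1)$ depending only on $s$, hence the stated lower bound on $\inf_{t}\text{dist}(\eta(t),\Gamma^-)$.

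The main point to monitor is purely bookkeeping: one must check that every application of the paradifferential calculus used in the one-phase argument still closes at the same regularity levels once $K$ is replaced by its two-phase counterpart, and that the only feature of the coefficient that matters is its strict positivity, $\f{\sigma}{\mu^+ + \mu^-}>0$ — there is no new analytic difficulty since the elliptic symbol $|\xi|\ell$ is identical. The single non-cosmetic input is the elliptic estimate \eqref{fpmEst}--\eqref{fpmEstR} for $f^\pm$, which replaces the direct Dirichlet--Neumann bound \eqref{GfEtaf} used in the one-phase distance estimate; everything else is a transcription of Proposition \ref{t:OnePhaseEst} and Lemma \ref{t:DistBound}.
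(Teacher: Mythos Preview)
Your proposal is correct and takes essentially the same approach as the paper: the paper does not give a separate proof for Proposition \ref{t:TwoPhaseEst}, but simply remarks that since the paradifferential form \eqref{ParaTwoMuskat} has the same structure as \eqref{ParaOneMuskat}, one follows the computation in Proposition \ref{t:OnePhaseEst} and Lemma \ref{t:DistBound}. Your outline faithfully carries this out, and you correctly identify the one genuinely new ingredient — replacing the direct bound \eqref{GfEtaf} on $G^-(\eta)\eta$ by the elliptic estimates \eqref{fpmEst}--\eqref{fpmEstR} for $f^-$ in the distance argument.
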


\section{Contraction estimates and proof of local well-posedness} \label{s:Contraction}
This section is dedicated to proving the contraction estimates for both the one-phase and two-phase Muskat problems.
These results are then combined with the a priori estimates from Section \ref{s:Reduct} to establish Theorem \ref{t:MainOne}.

We begin with a contraction estimate for the remainder term in the paralinearization of the elastic term $R_E(\eta)$ in \eqref{ElasticPara}.

\begin{lemma} \label{t:REEst}
For $\delta\in (0,s- \f32)$, and $\delta \leq 1$, $R_E(\eta)$ satisfies the contraction estimate
\begin{equation*}
 \|R_E(\eta_1) - R_E(\eta_2) \|_{H^{s-\f32}} \lesssim_{\|(\eta_1, \eta_2)\|_{H^s \times H^s}}  \Big(\|\eta_1\|_{H^{s+\f52}}+ \|\eta_2\|_{H^{s+\f52}} \Big) \| \eta_1 -\eta_2\|_{H^s}.
\end{equation*}
\end{lemma}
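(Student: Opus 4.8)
The plan is to mimic the proof of the paralinearization lemma \eqref{ElasticPara}, but to track differences of the various nonlinear building blocks rather than the blocks themselves. Recall that in that lemma, $R_E(\eta)$ was assembled from the remainders $R_1,\dots,R_5$ produced by paralinearizing the rational functions of $\eta_x$ and their successive derivatives (together with the balanced-error terms from \eqref{BalancedError}). So the first step is to set up the telescoping: write each intermediate quantity $Q(\eta)$ appearing in the construction — for instance $(1+\eta_x^2)^{-3/2}$, $(1+\eta_x^2)^{-5/2}$, $\eta_{xx}\eta_x(1+\eta_x^2)^{-7/2}$, $\eta_{xx}^2(1-6\eta_x^2)(1+\eta_x^2)^{-9/2}$ and their $x$-derivatives — and control $Q(\eta_1)-Q(\eta_2)$ in the appropriate Sobolev or Hölder norm by $\mathcal F(\|(\eta_1,\eta_2)\|_{H^s\times H^s})$ times $(\|\eta_1\|_{H^{s+5/2}}+\|\eta_2\|_{H^{s+5/2}})\|\eta_1-\eta_2\|_{H^s}$, using the algebra/product estimates recalled in Appendix \ref{s:Paradifferential} and the elementary fact that a smooth function $F$ applied to $\eta_x$ satisfies $F(\eta_{1,x})-F(\eta_{2,x})=G(\eta_{1,x},\eta_{2,x})(\eta_{1,x}-\eta_{2,x})$ with $G$ smooth.

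The second step is to propagate these differences through the paralinearization identities \eqref{Paralinearization} and \eqref{BalancedError}. Here one uses the standard bilinearity trick: the map $(\text{symbol }a,\text{function }u)\mapsto au - T_a u$ (the paraproduct remainder) is bilinear, so its difference splits as
\begin{equation*}
a_1 u_1 - T_{a_1}u_1 - (a_2 u_2 - T_{a_2}u_2) = \big(a_1 u_1 - T_{a_1}u_1 - (a_1 u_2 - T_{a_1}u_2)\big) + \big(a_1 u_2 - T_{a_1}u_2 - (a_2 u_2 - T_{a_2}u_2)\big),
\end{equation*}
and each bracket is a single paraproduct-remainder with one argument held fixed, to which the very same estimates \eqref{Paralinearization}, \eqref{BalancedError} apply verbatim — the first bracket gets a factor $\|u_1-u_2\|$, the second a factor $\|a_1-a_2\|$. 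Since the remainders $R_1,\dots,R_5$ (and the balanced errors) were originally estimated at regularity $H^{s+1/2+\delta}$ (respectively $H^{s-1/2+\delta}$) with $s$ worth of smoothness on the "low" argument and $s+5/2$ on the "high" one, redoing the bookkeeping with one argument replaced by a difference costs exactly one factor of $\|\eta_1-\eta_2\|_{H^s}$ and keeps a $\|\eta_i\|_{H^{s+5/2}}$ on the other. Summing the resulting estimates for $R_E(\eta_1)-R_E(\eta_2)$ after applying the outer two spatial derivatives (which lowers the regularity index from $s+1/2+\delta$ down to $s-3/2+\delta$, hence in particular controls the $H^{s-3/2}$ norm claimed), gives the conclusion; note the statement asks only for the $H^{s-3/2}$ norm, so the extra $\delta$ is slack we may discard.

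The main obstacle is purely organizational rather than conceptual: there are many terms, and one must be disciplined about which factor carries the high derivative count $H^{s+5/2}$ and which carries only $H^s$, so that after applying a product or paraproduct-remainder estimate the total derivative budget balances and exactly one copy of $\|\eta_1-\eta_2\|_{H^s}$ is extracted. A mild subtlety is the low regularity $s\ge 2$: some intermediate quantities (e.g. $\eta_{xx}^2(1-6\eta_x^2)(1+\eta_x^2)^{-9/2}$, involving two derivatives squared) sit only in $H^{s-2}$, which is still a space on which $H^s\cdot H^{s-2}\hookrightarrow H^{s-2}$-type product estimates and the paralinearization bounds are available for $s\ge 2$ (indeed $s>3/2$), so the scheme does not break; but one should check each use of a product estimate against the hypotheses in Appendix \ref{s:Paradifferential}. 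Because the structure is identical to the already-proved lemma \eqref{ElasticPara}, I would present the argument by indicating the telescoping decomposition explicitly for one representative term (say the leading $((1+\eta_x^2)^{-5/2})$ block producing the $\p_x^4\eta$ paraproduct) and then asserting that the remaining terms are handled identically, citing \eqref{Paralinearization}, \eqref{BalancedError}, \eqref{CompositionPara} and the product estimates of Appendix \ref{s:Paradifferential}.
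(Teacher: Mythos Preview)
Your telescoping approach can be made to work, but it differs from the paper's route, which instead reduces the contraction estimate to a single bound on the G\^ateaux derivative: one proves
\[
\|d_\eta R_E(\eta)\dot\eta\|_{H^{s-\frac32}}\lesssim_{\|\eta\|_{H^s}}\|\eta\|_{H^{s+\frac52}}\|\dot\eta\|_{H^s},
\]
and the lemma follows by writing $R_E(\eta_1)-R_E(\eta_2)=\int_0^1 d_\eta R_E\big(\eta_2+t(\eta_1-\eta_2)\big)(\eta_1-\eta_2)\,dt$. The paper computes $d_\eta\mathbf{E}(\eta)\dot\eta$ explicitly, observes that it equals $T_\ell\dot\eta$ up to terms of the form $a\cdot\partial_x^k\dot\eta - T_a\partial_x^k\dot\eta$ (handled by \eqref{BalancedError}), and then uses the identity $d_\eta R_E(\eta)\dot\eta = d_\eta\mathbf{E}(\eta)\dot\eta - T_\ell\dot\eta - T_{d_\eta\ell\,\dot\eta}\eta$ together with \eqref{TABound} for the last piece. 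One linearization thus replaces the many pairwise telescopes, and every remainder that appears is genuinely of the bilinear $au-T_au$ type.

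Your scheme, by contrast, must deal with the fact that the initial remainders $R_1,R_2$ in the proof of \eqref{ElasticPara} come from Bony's paralinearization \eqref{Paralinearization}, namely $R_1=F(\eta_x)-T_{F'(\eta_x)}\eta_x$ with $F(u)=u(1+u^2)^{-1/2}$. This is \emph{not} of the bilinear form $au-T_au$ (since $F(\eta_x)\neq F'(\eta_x)\eta_x$), so your bilinearity trick does not directly apply to $R_1(\eta_1)-R_1(\eta_2)$; you would need either a difference version of \eqref{Paralinearization} (not provided in Appendix~\ref{s:Paradifferential}) or to unpack the Bony decomposition by hand. This is fixable, but it is precisely the bookkeeping the G\^ateaux-derivative route sidesteps.
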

\begin{proof}
We denote the G\^{a}teaux derivative $d_uF(u)$ of a function $F$ at $u$ in the direction $\dot{u}$ as follows:
\begin{align*}
d_uF(u)\dot{u}=\lim_{\e\to0}\f{1}{\e}(F(u+\e\dot{u})-F(u)).
\end{align*}
To obtain this lemma, we just need prove
\begin{align} \label{dEtaREEta}
\|d_\eta R_{E}(\eta)\dot{\eta}\|_{H^{s-\f32}}\lesssim_{\|\eta\|_{H^s}}  \|\eta\|_{H^{s+\f52}} \| \dot{\eta}\|_{H^s}.
\end{align}
One can check that
\begin{align*}
&d_{\eta}\left(\f{\eta_x}{\sqrt{1+\eta_x^2}}\right)\dot{\eta}=\f{\dot{\eta}_x}{(1+\eta_x^2)^{\f32}},\quad
d_{\eta}\left(\f{1}{1+\eta_x^2}\right)\dot{\eta}=-\f{2\eta_x\dot{\eta}_x}{(1+\eta_x^2)^2},\\
&d_{\eta}\left(\f{\eta_x}{(1+\eta_x^2)^{\f72}}\right)\dot{\eta}=\f{1-6\eta_x^2}{(1+\eta_x^2)^{\f92}}\dot{\eta}_x.
\end{align*}
Hence, using the chain rule for the G\^{a}teaux derivative,
\begin{align*}
d_{\eta}\left[\f{1}{1+\eta_x^2}\left(\dfrac{\eta_x}{\sqrt{1+\eta_x^2}}\right)_x \right]\dot{\eta}
=\f{\dot{\eta}_{xx}}{(1+\eta^2_x)^{\f52}}-\f{5\eta_x\eta_{xx}\dot{\eta}_x}{(1+\eta_x^2)^{\f72}},\\
d_{\eta}\left(\f{\eta_x\eta_{xx}^2}{(1+\eta^2_x)^{\f72}}\right)\dot{\eta}
=\f{(1-6\eta_x^2)\eta^2_{xx}}{(1+\eta_x^2)^{\f92}}\dot{\eta}_x
+\f{2\eta_x\eta_{xx}}{(1+\eta_x^2)^{\f72}}\dot{\eta}_{xx}.
\end{align*}
Therefore, we have
\begin{align*}
&d_{\eta}\left(\f{1}{1+\eta_x^2}\left(\dfrac{\eta_x}{\sqrt{1+\eta_x^2}}\right)_x\right)_{xx}\dot{\eta}=\f{\p_x^4\dot{\eta}}{(1+\eta_x)^{\f52}}-\f{10\eta_x\eta_{xx}\p^3_x\dot{\eta}}{(1+\eta_x^2)^{\f72}}\\
+&((1+\eta^2_x)^{-\f52})_{xx}\dot{\eta}_{xx}-\f{5\eta_x\eta_{xx}\dot{\eta}_{xxx}}{(1+\eta_x^2)^{\f72}}-\left(\f{10\eta_x\eta_{xx}}{(1+\eta_x^2)^{\f72}}\right)_x\dot{\eta}_{xx}-\left(\f{5\eta_x\eta_{xx}}{(1+\eta_x^2)^{\f72}}\right)_{xx}\dot{\eta}_{x},
\end{align*}
and
\begin{align*}
\f52d_{\eta}\left(\f{\eta_x\eta_{xx}^2}{(1+\eta^2_x)^{\f72}}\right)_x\dot{\eta}
=&\f{5\eta_x\eta_{xx}}{(1+\eta_x^2)^{\f72}}\p^3_{x}\dot{\eta}+\left(\f{5\eta_x\eta_{xx}}{(1+\eta_x^2)^{\f72}}\right)_x\dot{\eta}_{xx}\\
+&\f{5(1-6\eta_x^2)\eta^2_{xx}}{2(1+\eta_x^2)^{\f92}}\dot{\eta}_{xx}+\left(\f{5(1-6\eta_x^2)\eta^2_{xx}}{2(1+\eta_x^2)^{\f92}}\right)_x\dot{\eta}_{x}.
\end{align*}
Thus, we obtain that
\begin{align*}
&d_{\eta}\mathbf{E}(\eta)\dot{\eta}=
\f{\p_x^4\dot{\eta}}{(1+\eta^2_x)^{\f52}}+2\big((1+\eta^2_x)^{-\f52}\big)_x\p^3_x\dot{\eta}\\
&+\left\{((1+\eta^2_x)^{-\f52})_{xx}-\left(\f{5\eta_x\eta_{xx}}{(1+\eta_x^2)^{\f72}}\right)_x+\f{5(1-6\eta_x^2)\eta^2_{xx}}{2(1+\eta_x^2)^{\f92}}\right\}\dot{\eta}_{xx}\\
&-\left[\left(\f{5\eta_x\eta_{xx}}{(1+\eta_x^2)^{\f72}}\right)_{xx}-\left(\f{5(1-6\eta_x^2)\eta^2_{xx}}{2(1+\eta_x^2)^{\f92}}\right)_x\right]\dot{\eta}_{x}.
\end{align*}
Recall that
\begin{align*}
   T_{\ell}\dot{\eta} =&T_{(1+\eta_x^2)^{-\f52}} \p_x^4\dot{\eta} + 2T_{((1+\eta_x^2)^{-\f52})_x} \p_x^3\dot{\eta}+ T_{((1+\eta_x^2)^{-\f52})_{xx}-5(\eta_{xx}\eta_x(1+\eta_x^2)^{-\f72})_x+\f52\eta_{xx}^2(1-6\eta_x^2)(1+\eta_x^2)^{-\f92}} \dot{\eta}_{xx}\\
+&T_{-5\eta_{xx}\eta_x(1+\eta_x^2)^{-\f72}+\f52(\eta_{xx}^2(1-6\eta_x^2)(1+\eta_x^2)^{-\f92})_x} \dot{\eta}_x.
\end{align*}
By using \eqref{BalancedError}, we then get
\begin{align*}
\|d_{\eta}\mathbf{E}(\eta)\dot{\eta}-T_{\ell}\dot{\eta}\|_{H^{s-\f32}}\lesssim_{\|\eta\|_{H^s}}   \|\eta\|_{H^{s+\f52}} \| \dot{\eta}\|_{H^s}.
\end{align*}
Since 
\begin{align*}
d_{\eta}R_E(\eta)\dot{\eta}=d_{\eta}\mathbf{E}(\eta)\dot{\eta}-T_{\ell}\dot{\eta}-T_{d_{\eta}\ell\dot{\eta}}\eta,
\end{align*}
it remains to prove
\begin{equation} \label{tdEtaEllEtaEta}
\|T_{d_{\eta}\ell\dot{\eta}}\eta\|_{H^{s-\f32}}\lesssim_{\|\eta\|_{H^s}} \|\eta\|_{H^{s+\f52}} \| \dot{\eta}\|_{H^s}.
\end{equation}
This can be deduced from \eqref{TABound}. Thus, we prove \eqref{dEtaREEta}, which leads to the proof of the lemma.
\end{proof}
\begin{lemma}
For $0<\delta \leq \f12$ and $s\geq2$, $\mathbf{E}(\eta)$ satisfies the contraction estimate
\begin{equation} \label{ElasticDiff}
\begin{aligned}
 \|\mathbf{E}(\eta_1) - \mathbf{E}(\eta_2) \|_{H^{s-\f32-\delta}} &\lesssim_{\|(\eta_1,\eta_2)\|_{H^s}}\|\eta_1 - \eta_2\|_{H^{s+\f52-\delta}}\\
 +&(\|\eta_1\|_{H^{s+\f52-\delta}}+\|\eta_2\|_{H^{s+\f52-\delta}})\|\eta_1 - \eta_2\|_{H^{s}}.
 \end{aligned}
\end{equation}
\end{lemma}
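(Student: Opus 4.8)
The plan is to imitate the proof of Lemma~\ref{t:REEst} and reduce the contraction estimate to a single bound on the linearization $d_\eta\mathbf{E}(\eta)\dot\eta$. Setting $\eta_\tau = \tau\eta_1 + (1-\tau)\eta_2$ and $\dot\eta = \eta_1 - \eta_2$, the fundamental theorem of calculus gives
\begin{equation*}
\mathbf{E}(\eta_1) - \mathbf{E}(\eta_2) = \int_0^1 d_\eta\mathbf{E}(\eta_\tau)\dot\eta\,d\tau ,
\end{equation*}
and, since $\mathbf{E}$ is a local differential operator, no separation-from-the-boundary hypothesis enters. Because $\|\eta_\tau\|_{H^\sigma}\le\|\eta_1\|_{H^\sigma}+\|\eta_2\|_{H^\sigma}$ for every $\sigma$, it suffices to prove, for $\eta=\eta_\tau$ and uniformly in $\tau\in[0,1]$,
\begin{equation} \label{dEtaELin}
\|d_\eta\mathbf{E}(\eta)\dot\eta\|_{H^{s-\f32-\delta}} \lesssim_{\|\eta\|_{H^s}} \|\dot\eta\|_{H^{s+\f52-\delta}} + \|\eta\|_{H^{s+\f52-\delta}}\,\|\dot\eta\|_{H^s};
\end{equation}
integrating \eqref{dEtaELin} in $\tau$ then yields \eqref{ElasticDiff}.

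To prove \eqref{dEtaELin} I would use the decomposition established inside the proof of Lemma~\ref{t:REEst}, namely $d_\eta\mathbf{E}(\eta)\dot\eta = T_{\ell}\dot\eta + T_{d_\eta\ell\,\dot\eta}\,\eta + d_\eta R_E(\eta)\dot\eta$. For the principal term, \eqref{ellEst} shows that the remainder $T_\ell\dot\eta - \p_x^4\dot\eta$ is of lower order, with $\|T_\ell\dot\eta - \p_x^4\dot\eta\|_{H^{s-\f32-\delta}}\lesssim_{\|\eta\|_{H^s}}\|\dot\eta\|_{H^{s+\f52-\delta}}$ — the point being that even at the endpoint $s=2$, where the subprincipal symbol coefficients are rough, they act on derivatives of $\dot\eta$ of order at most $3$ and are controlled in negative Hölder spaces $C^{-r}_*$ by $\|\eta\|_{H^s}$ — so that $\|T_\ell\dot\eta\|_{H^{s-\f32-\delta}}\lesssim_{\|\eta\|_{H^s}}\|\dot\eta\|_{H^{s+\f52-\delta}}$, which is the first term on the right of \eqref{dEtaELin}. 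The remaining two terms are exactly $d_\eta\mathbf{E}(\eta)\dot\eta - T_\ell\dot\eta$, which were already bounded in the proof of Lemma~\ref{t:REEst} by $\|\eta\|_{H^{s+\f52}}\|\dot\eta\|_{H^s}$ (times a constant depending on $\|\eta\|_{H^s}$) via the balanced paraproduct estimate \eqref{BalancedError} and \eqref{TABound}; running the same argument but allowing a uniform loss of $\delta$ derivatives — absorbed by lowering the output from $H^{s-\f32}$ to $H^{s-\f32-\delta}$ and the input regularity of $\eta$ from $H^{s+\f52}$ to $H^{s+\f52-\delta}$ — gives $\|d_\eta\mathbf{E}(\eta)\dot\eta - T_\ell\dot\eta\|_{H^{s-\f32-\delta}}\lesssim_{\|\eta\|_{H^s}}\|\eta\|_{H^{s+\f52-\delta}}\|\dot\eta\|_{H^s}$, the second term on the right of \eqref{dEtaELin}. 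Equivalently, one could bypass the linearization and argue from $\mathbf{E}(\eta)=T_\ell\eta+R_E(\eta)$ directly, splitting $T_{\ell(\eta_1)}\eta_1-T_{\ell(\eta_2)}\eta_2 = T_{\ell(\eta_1)}(\eta_1-\eta_2) + T_{\ell(\eta_1)-\ell(\eta_2)}\eta_2$, using that $\ell$ is Lipschitz in $\eta$ in the relevant symbol norm (e.g. via $\eta_{1,xx}^2-\eta_{2,xx}^2 = (\eta_{1,xx}+\eta_{2,xx})(\eta_{1,xx}-\eta_{2,xx})$) and handling $R_E(\eta_1)-R_E(\eta_2)$ by the same $\delta$-lossy variant of Lemma~\ref{t:REEst}.

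I expect the main obstacle to be the low-regularity endpoint $s=2$. There $\eta_{xx}\in L^2$ only, so the subprincipal coefficients of $\ell$ and of $d_\eta\mathbf{E}(\eta)\dot\eta$ — which contain factors such as $\eta_{xx}^2$, $\eta_{xx}\eta_{xxx}$ and $\eta_{xxxx}$ — are genuine distributions of negative Hölder exponent rather than functions, and the naive ``multiply the Sobolev norms'' rule is not available. The way around this is to route each such rough coefficient through the paraproduct operator $T$, exploiting that $T_a\p_x^k$ maps $H^m$ into $H^{m-k-r}$ when $a\in C^{-r}_*$, with $\|a\|_{C^{-r}_*}\lesssim_{\|\eta\|_{H^s}}1$ (respectively $\lesssim_{\|\eta\|_{H^s}}\|\dot\eta\|_{H^s}$ for the coefficients linear in $\dot\eta$), together with the exact divergence structure of several of these coefficients, e.g. $G'(\eta_x)\eta_{xx}=\p_x[G(\eta_x)]$, to avoid a spurious derivative loss. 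One then has to check that in every term the combined loss $k+r$ does not exceed $4$, so that with $\dot\eta\in H^{s+\f52-\delta}$ and $\eta\in H^{s+\f52-\delta}$ the output lands in $H^{s-\f32-\delta}$; the parameter $\delta>0$ supplies precisely the slack needed to keep all of these product and paraproduct estimates off their endpoints. This is the same mechanism already used in the paralinearization of $\mathbf{E}(\eta)$ and in Lemma~\ref{t:REEst}, so the proof should require only careful bookkeeping and no new ideas.
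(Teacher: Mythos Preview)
Your proposal is correct and follows essentially the same approach as the paper: both reduce to bounding $d_\eta\mathbf{E}(\eta)\dot\eta$ via the decomposition $d_\eta\mathbf{E}(\eta)\dot\eta = T_\ell\dot\eta + T_{d_\eta\ell\,\dot\eta}\eta + d_\eta R_E(\eta)\dot\eta$, then invoke \eqref{ellEst} for the principal term and the estimates \eqref{dEtaREEta}, \eqref{tdEtaEllEtaEta} from Lemma~\ref{t:REEst} for the remaining two. Your version is slightly more careful in tracking the $\delta$-loss (the paper's intermediate bound actually carries $\|\eta\|_{H^{s+\f52}}$ rather than $\|\eta\|_{H^{s+\f52-\delta}}$, and the refinement you describe is what is implicitly needed), but this is a matter of bookkeeping, not a different argument.
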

\begin{proof}
Recall that
\begin{equation*}
d_{\eta}\mathbf{E}(\eta)\dot{\eta} = d_{\eta}R_E(\eta)\dot{\eta}+T_{\ell}\dot{\eta}+T_{d_{\eta}\ell\dot{\eta}}\eta.
\end{equation*}
Then we use \eqref{ellEst}, \eqref{dEtaREEta} and \eqref{tdEtaEllEtaEta} to estimate
\begin{align*}
 \|d_{\eta}\mathbf{E}(\eta)\dot{\eta} \|_{H^{s-\f32-\delta}} &\leq \|d_{\eta}R_E(\eta)\dot{\eta} \|_{H^{s-\f32-\delta}} + \|T_{\ell}\dot{\eta}\|_{H^{s-\f32-\delta}} + \|T_{d_{\eta}\ell\dot{\eta}}\eta\|_{H^{s-\f32-\delta}} \\
 &\lesssim_{\|\eta\|_{H^s}} \|\eta\|_{H^{s+\f52}} \| \dot{\eta}\|_{H^s} + \| \eta\|_{H^s}\|\dot{\eta}\|_{H^{s+\f52-\delta}}.
\end{align*}
This leads to the estimate \eqref{ElasticDiff}.
\end{proof}

\subsection{Contraction estimate for one-phase Muskat problem}
Suppose that $\eta_1$ and $\eta_2$ are two solutions of one-phase Muskat problem \eqref{OneMuskat} in $Z^s(T)$ with the condition \eqref{BottomH}, then we show that the $Z^s(T)$ norm of $\eta_1 - \eta_2$ can be controlled by the initial condition of $\eta_1 - \eta_2$ in $H^s$.
\begin{proposition}\label{t:propConOne}
Let $s>2$.
Suppose that $\eta_1$ and $\eta_2$ are two solutions of one-phase Muskat problem \eqref{OneMuskat} in $Z^s$ with the condition \eqref{BottomH}, then
\begin{equation} \label{ContractOneMuskat}
 \|\eta_1 - \eta_2\|_{Z^s(T)} \lesssim_{\|(\eta_1, \eta_2)\|_{Z^s(T) \times Z^s(T)}} \|(\eta_1 - \eta_2)(0,\cdot) \|_{H^s}.
\end{equation}
The implicit constant in the inequality depends only on $\big(s,h, \frac{\sigma}{\mu^-},  \frac{\rho^- g}{ \mu^-}\big)$.
\end{proposition}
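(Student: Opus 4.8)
The plan is to linearize the equation around $\eta_1$. Writing $w:=\eta_1-\eta_2$, one shows that $w$ solves a fifth-order quasilinear parabolic equation whose leading operator is $-\f{\sigma}{\mu^-}|D|T_{\ell(\eta_1)}$, i.e.\ the operator of \eqref{ParaOneMuskat} with coefficients frozen at $\eta_1$, and then runs on $w$ the weighted energy estimate of Proposition \ref{t:OnePhaseEst}. Concretely, I would subtract the equations \eqref{OneMuskat} for $\eta_1$ and $\eta_2$, write $G^-(\eta)=|D|+R^-(\eta)$, split $G^-(\eta_1)\mathbf{E}(\eta_1)-G^-(\eta_2)\mathbf{E}(\eta_2)=[R^-(\eta_1)-R^-(\eta_2)]\mathbf{E}(\eta_2)+G^-(\eta_1)[\mathbf{E}(\eta_1)-\mathbf{E}(\eta_2)]$, and extract the principal part of $|D|[\mathbf{E}(\eta_1)-\mathbf{E}(\eta_2)]$ via $\mathbf{E}(\eta_1)-\mathbf{E}(\eta_2)=T_{\ell(\eta_1)}w+\big[(\mathbf{E}(\eta_1)-\mathbf{E}(\eta_2))-T_{\ell(\eta_1)}w\big]$, arriving at
\begin{equation*}
\p_t w+\f{\sigma}{\mu^-}|D|T_{\ell(\eta_1)}w=\mathcal{R},
\end{equation*}
where $\mathcal{R}$ collects (up to constants) the terms $[R^-(\eta_1)-R^-(\eta_2)]\mathbf{E}(\eta_2)$, $R^-(\eta_1)[\mathbf{E}(\eta_1)-\mathbf{E}(\eta_2)]$, $|D|\big[(\mathbf{E}(\eta_1)-\mathbf{E}(\eta_2))-T_{\ell(\eta_1)}w\big]$, and the gravity difference $G^-(\eta_1)\eta_1-G^-(\eta_2)\eta_2$.

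Next I would bound $\mathcal{R}$ in $H^{s-\f52}$. For the elastic discrepancy, write $\mathbf{E}(\eta_1)-\mathbf{E}(\eta_2)=\int_0^1 d_\eta\mathbf{E}(\eta_\tau)\,w\,d\tau$ with $\eta_\tau:=\eta_2+\tau w$, insert the identity $d_\eta\mathbf{E}(\eta)\dot{\eta}=T_{\ell(\eta)}\dot{\eta}+T_{d_\eta\ell(\eta)\dot{\eta}}\eta+d_\eta R_E(\eta)\dot{\eta}$ from the proof of Lemma \ref{t:REEst}, and apply the balanced paraproduct bounds \eqref{dEtaREEta}, \eqref{tdEtaEllEtaEta} (the latter once more with $w$ in the high-regularity slot, to control $T_{\ell(\eta_\tau)-\ell(\eta_1)}w$), obtaining
\begin{equation*}
\big\|(\mathbf{E}(\eta_1)-\mathbf{E}(\eta_2))-T_{\ell(\eta_1)}w\big\|_{H^{s-\f32}}\lesssim_{\|(\eta_1,\eta_2)\|_{H^s}}\big(\|\eta_1\|_{H^{s+\f52}}+\|\eta_2\|_{H^{s+\f52}}+\|w\|_{H^{s+\f52}}\big)\|w\|_{H^s}.
\end{equation*}
For the Dirichlet--Neumann pieces I would use the contraction estimates for $G^-(\eta)$ and $R^-(\eta)$ from Appendix \ref{s:DNOperator} together with \eqref{ElasticDiff} and $\|\mathbf{E}(\eta_2)\|_{H^{s-\f32}}\lesssim_{\|\eta_2\|_{H^s}}\|\eta_2\|_{H^{s+\f52}}$; these are all of strictly lower order in $w$, producing at worst $\|w\|_{H^{s+\f52-\delta}}$ for a small $\delta>0$ and $\|\eta_i\|_{H^{s+\f52}}$-weighted copies of $\|w\|_{H^s}$. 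Altogether,
\begin{equation*}
\|\mathcal{R}(t)\|_{H^{s-\f52}}\lesssim_{\|(\eta_1,\eta_2)(t)\|_{H^s}}\|w(t)\|_{H^{s+\f52-\delta}}+\big(1+\|\eta_1(t)\|_{H^{s+\f52}}+\|\eta_2(t)\|_{H^{s+\f52}}+\|w(t)\|_{H^{s+\f52}}\big)\|w(t)\|_{H^s}.
\end{equation*}

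To conclude, I would apply $\Lambda_1:=\langle T_{(1+(\eta_1)_x^2)^{-\f{s}{2}}}D\rangle^s$ to the $w$-equation, set $w_s:=\Lambda_1 w$, and repeat the computation of Proposition \ref{t:OnePhaseEst} verbatim with $\eta_1$ replacing $\eta$ in the coefficients and $w$ replacing $\eta$: the commutator $[\Lambda_1,|D|T_{\ell(\eta_1)}]$, the splitting $|D|T_{\ell(\eta_1)}=T_{m_1^{[5]}}+T_{m_1^{[\leq 4]}}$ with $m_1^{[5]}=(1+(\eta_1)_x^2)^{-\f52}|\xi|^5$ strictly elliptic since $\eta_1\in\dot{W}^{1,\infty}$, and the square-root/coercivity argument. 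Bounding $|(\Lambda_1\mathcal{R},w_s)_{L^2}|\lesssim\|\mathcal{R}\|_{H^{s-\f52}}\|w\|_{H^{s+\f52}}$, inserting the previous display, interpolating $\|w\|_{H^{s+\f52-\delta}}\lesssim\|w\|_{H^s}^{1-\theta}\|w\|_{H^{s+\f52}}^{\theta}$ with $\theta<1$, and applying Young's inequality (absorbing a small multiple of $\|w\|_{H^{s+\f52}}^2$ into the parabolic gain; any excess power of $\|w\|_{H^s}$ costs nothing since $\|w\|_{L^\infty_t H^s}\leq\|\eta_1\|_{Z^s(T)}+\|\eta_2\|_{Z^s(T)}$), one arrives at
\begin{equation*}
\f{d}{dt}\|w_s\|_{L^2}^2+\f{1}{\mathcal{F}}\|w\|_{H^{s+\f52}}^2\leq g(t)\|w\|_{H^s}^2,\qquad g(t)=\mathcal{F}\big(1+\|\eta_1(t)\|_{H^{s+\f52}}^2+\|\eta_2(t)\|_{H^{s+\f52}}^2\big),
\end{equation*}
with $\mathcal{F}=\mathcal{F}(\|(\eta_1,\eta_2)\|_{Z^s(T)})$ and $\int_0^T g\leq\mathcal{F}$. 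Gr\"onwall's inequality in $t$, the norm equivalence $\f{1}{\mathcal{F}}\|w\|_{H^s}\leq\|w_s\|_{L^2}\leq\mathcal{F}\|w\|_{H^s}$, and a final time integration of the differential inequality then yield \eqref{ContractOneMuskat}, with constant depending only on $\big(s,h,\f{\sigma}{\mu^-},\f{\rho^- g}{\mu^-}\big)$ and $\|(\eta_1,\eta_2)\|_{Z^s(T)}$.

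The delicate point I expect is the term $|D|\big[(\mathbf{E}(\eta_1)-\mathbf{E}(\eta_2))-T_{\ell(\eta_1)}w\big]$: the discrepancy between the elastic nonlinearity and its linearization frozen at $\eta_1$ is genuinely \emph{quadratic} in $w$ at top order, so one must use the full strength of the balanced paraproduct estimates behind Lemma \ref{t:REEst} to make it enter the energy inequality only with a power $\|w\|_{H^{s+\f52}}^{1+a}$, $a<1$, so that Young's inequality can absorb it into the fifth-order parabolic gain while the residual $\|w\|_{H^s}$-powers are swallowed by the a priori bound on $\|w\|_{L^\infty_t H^s}$. This is also where the strict inequality $s>2$ is used, giving the room needed for the interpolations and for the intermediate regularity $s-\f32$ at which \eqref{ElasticDiff}, Lemma \ref{t:REEst} and the Dirichlet--Neumann contraction estimates operate. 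The rest of the argument mirrors the a priori estimate of Proposition \ref{t:OnePhaseEst} line by line.
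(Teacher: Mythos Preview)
Your proposal is correct and follows essentially the same route as the paper: derive the paradifferential equation $\p_t\delta\eta+\tfrac{\sigma}{\mu^-}|D|T_{\ell_1}\delta\eta=\mathcal{R}$ with the stated remainder bound, then rerun the weighted energy argument of Proposition~\ref{t:OnePhaseEst} and close with Gr\"onwall. The only cosmetic difference is in how the elastic discrepancy is unpacked: the paper writes $\mathbf{E}(\eta_1)-\mathbf{E}(\eta_2)=T_{\ell_1}\delta\eta+T_{\ell_1-\ell_2}\eta_2+(R_E(\eta_1)-R_E(\eta_2))$ and invokes Lemma~\ref{t:REEst} and \eqref{tdEtaEllEtaEta} on the last two pieces, whereas you pass through $\int_0^1 d_\eta\mathbf{E}(\eta_\tau)w\,d\tau$; both routes use the same bounds \eqref{dEtaREEta}, \eqref{tdEtaEllEtaEta} and yield the same remainder control (your extra $\|w\|_{H^{s+\frac52}}$ factor is dominated by $\|\eta_1\|_{H^{s+\frac52}}+\|\eta_2\|_{H^{s+\frac52}}$, so it is redundant rather than problematic).
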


\begin{proof}
Writing $\delta\eta = \eta_1 - \eta_2$ for the difference of two solutions to the one-phase Muskat problem.
From the equation \eqref{OneMuskat}, one can write
\begin{equation} \label{DiffMuskatOne}
    \p_t \delta\eta = -\f{\sigma}{\mu^-}G^-(\eta_1)(\mathbf{E}(\eta_1) - \mathbf{E}(\eta_2))- \mathcal{R}_0,
\end{equation}
where the remainder term $\mathcal{R}_0$ is given by
\begin{equation*}
 \mathcal{R}_0 : = \f{\rho^- g}{\mu^-}G^-(\eta_1) \delta\eta + [G^-(\eta_1)-G^-(\eta_2)]\cdot \left( \f{\sigma}{\mu^-}\mathbf{E}(\eta_2) + \f{\rho^- g}{\mu^-}\eta_2\right).
\end{equation*}
Using the estimate \eqref{GfEtaf},  
\begin{equation*}
 \|G^-(\eta_1) \delta\eta \|_{H^{s-\f52}} \lesssim_{\|\eta_1\|_{H^s}} \|\delta \eta\|_{H^{s-\f32}}.
\end{equation*}
According to the difference estimate for the Dirichlet-Neumann operator \eqref{GetaDiff},  we estimate
\begin{align*}
& \left\| [G^-(\eta_1)-G^-(\eta_2)]\cdot \left( \f{\sigma}{\mu^-}\mathbf{E}(\eta_2) + \f{\rho^- g}{\mu^-}\eta_2\right) \delta\eta  \right\|_{H^{s-\f52}} \\
 &\lesssim_{\|(\eta_1, \eta_2)\|_{H^s \times H^s}} \left(  \f{\sigma}{\mu^-} +  \f{\rho^- g}{\mu^-}\right) \|\delta \eta\|_{H^s} \|\eta_2\|_{H^{s+\f52}}
\end{align*}
Next, we rewrite the term $G^-(\eta_1)(\mathbf{E}(\eta_1) - \mathbf{E}(\eta_2))$.
Using the paralinearization of the Dirichlet-Neumann operator \eqref{GfDRf},
\begin{equation*}
 G^-(\eta_1)(\mathbf{E}(\eta_1) - \mathbf{E}(\eta_2)) = |D|(\mathbf{E}(\eta_1) - \mathbf{E}(\eta_2)) + \mathcal{R}_1, \quad \|\mathcal{R}_2\|_{H^{s-\f52}}\lesssim_{\|\eta_1\|_{H^s}} \|\mathbf{E}(\eta_1) - \mathbf{E}(\eta_2) \|_{H^{s-\f32 - \delta}}.
\end{equation*}
For the difference estimate of two elastic terms, we recall the estimate in \eqref{ElasticDiff}, 
\begin{equation*}
\|\mathbf{E}(\eta_1) - \mathbf{E}(\eta_2) \|_{H^{s-\f32 - \delta}} \lesssim_{\|(\eta_1, \eta_2)\|_{H^s \times H^s}} \|\delta\eta\|_{H^{s+\f52 -\delta}} +\Big(\|\eta_1\|_{H^{s+\f52-\delta}}+\|\eta_2\|_{H^{s+\f52-\delta}}\Big)\|\delta\eta\|_{H^{s}}.
\end{equation*}
Hence, we get the estimate for $\mathcal{R}_1$:
\begin{equation*}
\|\mathcal{R}_1\|_{H^{s-\f52}}\lesssim_{\|(\eta_1, \eta_2)\|_{H^s \times H^s}} \|\delta\eta\|_{H^{s+\f52 -\delta}}.
\end{equation*}
We then write 
\begin{equation*}
 |D|(\mathbf{E}(\eta_1) - \mathbf{E}(\eta_2)) = |D|T_{\ell_1}\delta\eta + |D|T_{\ell_1 - \ell_2}\eta_2 + |D|(R_E(\eta_1)-R_E(\eta_2)).
\end{equation*}
By the result in Lemma \ref{t:REEst}, 
\begin{equation*}
 \||D|(R_E(\eta_1)-R_E(\eta_2))\|_{H^{s-\f52}} \lesssim_{\|(\eta_1, \eta_2)\|_{H^s \times H^s}}  \Big(\|\eta_1\|_{H^{s+\f52}}+ \|\eta_2\|_{H^{s+\f52}} \Big) \|\delta \eta\|_{H^s}.
\end{equation*}
From \eqref{tdEtaEllEtaEta}, we get that
\begin{equation*}
 \| |D|T_{\ell_1 - \ell_2}\eta_2\|_{H^{s-\f52}} \lesssim_{\|(\eta_1, \eta_2)\|_{H^s \times H^s}}  \| \delta \eta\|_{H^s}\|\eta_2\|_{H^{s+\f52}}.
\end{equation*}
Collecting the above estimates, the equation \eqref{DiffMuskatOne} can be written as
\begin{equation} \label{ParaDiffMuskatOne}
\p_t\delta \eta=-\f{\sigma}{\mu^-}|D|T_{\ell_1}\delta\eta+\mathcal{R}_2,
\end{equation}
where the remainder term
\begin{equation*}
\|\mathcal{R}_2\|_{H^{s-\f52}}\lesssim_{\|(\eta_1, \eta_2)\|_{H^s \times H^s}} \|\delta\eta\|_{H^{s+\f52-\delta}} +\Big(\|\eta_1\|_{H^{s+\f52}}+\|\eta_2\|_{H^{s+\f52}}\Big)\| \delta \eta\|_{H^s}.
\end{equation*}
Note that the equation \eqref{ParaDiffMuskatOne} takes the similar form as the equation \eqref{ParaOneMuskat}.
Hence, doing the same analysis as in the proof of Proposition \ref{t:OnePhaseEst}, we get the energy inequality
\begin{align*}
\f12 \|\delta \eta \|^2_{H^s} &\leq - \f{1}{\mathcal{F}(\|(\eta_1, \eta_2)\|_{H^s\times H^s})} \|\delta \eta \|^2_{H^{s+\f52}} + \mathcal{F}(\|(\eta_1, \eta_2)\|_{H^s\times H^s}) \|\delta \eta\|_{H^{s+\f52}} \|\delta \eta\|_{H^{s+\f52-\delta}} \\
&+ \mathcal{F}(\|(\eta_1, \eta_2)\|_{H^s\times H^s})\|(\eta_1, \eta_2)\|_{H^{s+\f52}\times H^{s+\f52}}  \|\delta \eta\|_{H^s} \|\delta \eta\|_{H^{s+\f52}},
\end{align*}
for some function $\mathcal{F}: \R^+ \rightarrow \R^+$ depending only on $(h,s,\frac{\rho^- g}{\mu^-},\frac{\sigma}{\mu^-})$.
By interpolation and Young's inequality, we bound the last two terms in the inequality by 
\begin{align*}
&\mathcal{F}(\|(\eta_1, \eta_2)\|_{H^s\times H^s}) \|\delta \eta\|_{H^{s+\f52}} \|\delta \eta\|_{H^{s+\f52-\delta}} \\
\leq& \f{1}{4\mathcal{F}(\|(\eta_1, \eta_2)\|_{H^s\times H^s})} \|\delta \eta \|^2_{H^{s+\f52}} + \mathcal{F}_1(\|(\eta_1, \eta_2)\|_{H^s\times H^s}) \|\delta \eta\|^2_{H^s}, \\
& \mathcal{F}(\|(\eta_1, \eta_2)\|_{H^s\times H^s}) \Big(\|(\eta_1, \eta_2)\|_{H^{s+\f52}\times H^{s+\f52}}\Big)  \|\delta \eta\|_{H^s} \|\delta \eta\|_{H^{s+\f52}} \\
\leq& \f{1}{4\mathcal{F}(\|(\eta_1, \eta_2)\|_{H^s\times H^s})} \|\delta \eta \|^2_{H^{s+\f52}} + \mathcal{F}_1(\|(\eta_1, \eta_2)\|_{H^s\times H^s}) \|(\eta_1, \eta_2)\|_{H^{s+\f52}\times H^{s+\f52}}^2 \|\delta \eta\|^2_{H^s},
\end{align*}
for some functions $\mathcal{F}, \mathcal{F}_1$ and $\mathcal{F}_2$ depending only on $(h,s,\frac{\rho^- g}{\mu^-},\frac{\sigma}{\mu^-})$.
Hence, we get the inequality 
\begin{align*}
\f12 \|\delta \eta \|^2_{H^s} &\leq - \f{1}{\mathcal{F}(\|(\eta_1, \eta_2)\|_{H^s\times H^s})} \|\delta \eta \|^2_{H^{s+\f52}} + \mathcal{F}(\|(\eta_1, \eta_2)\|_{H^s\times H^s}) \|(\eta_1, \eta_2)\|_{H^{s+\f52}\times H^{s+\f52}}^2  \|\delta \eta\|_{H^s}.
\end{align*}
Applying the Gronwall's inequality and using the definition of the $Z^s(T)$ norm, we obtain the desired estimate \eqref{ContractOneMuskat}.
\end{proof}

\subsection{Contraction estimate for two-phase Muskat problem}
Suppose that $\eta_1$ and $\eta_2$ are two solutions of two-phase Muskat problem \eqref{TwoMuskatOne}-\eqref{TwoMuskatTwo} in $Z^s(T)$ with the condition \eqref{BottomH}, then similar to the one-phase case, we show that the $Z^s(T)$ norm of $\eta_1 - \eta_2$ can be controlled by the initial condition of $\eta_1 - \eta_2$ in $H^s$.

For $j = 1,2$, consider $f_j^\pm$ that solve the system 
\begin{equation*} 
\begin{cases}
f^-_j - f^+_j =  \sigma \mathbf{E}(\eta_j) + g(\rho^- - \rho^+)\eta_j, \\
\frac{1}{\mu^+}G^+(\eta_j)f^+_j = \frac{1}{\mu^-}G^-_j(\eta_j)f^-.
\end{cases}
\end{equation*}
We write $\delta f^\pm = f^\pm_1 - f^\pm_2$, and $\delta \eta = \eta_1 - \eta_2$ for the difference of these functions.
We first prove the following contraction estimate for $f^\pm$.

\begin{lemma}
Let $s\geq 2$, and $\delta \in (0, \f12]$.
For each $r\in [\f12, s-\f32]$,
\begin{equation} \label{DeltaFpmEst}
\begin{aligned}
 \|\delta f^\pm \|_{\tilde{H}^r_\pm} &\lesssim_{\|(\eta_1, \eta_2)\|_{H^s\times H^s}} \sigma \|\delta \eta \|_{H^{r+4}}+ (\rho^- -\rho^+)g \|\delta \eta\|_{H^r}\\
 +& (\sigma+(\rho^--\rho^+)g)\| \delta \eta\|_{H^s}\|(\eta_1, \eta_2)\|_{H^{s+\f52}\times H^{s+\f52}}.
\end{aligned}    
\end{equation}
The implicit constant in the inequality depends only on $\big(s,h, r,\mu^\pm\big)$.
\end{lemma}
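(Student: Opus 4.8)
The plan is to subtract the two transmission systems and regard $\delta f^\pm$ as the solution of the transmission problem with coefficients frozen at $\eta_1$, driven by two sources: the difference of the data, and the difference of the Dirichlet--Neumann operators acting on the fixed profiles $f_2^\pm$. Subtracting the two copies of \eqref{TwoMuskatTwo} gives
\begin{equation*}
\begin{cases}
\delta f^- - \delta f^+ = \sigma\big(\mathbf{E}(\eta_1) - \mathbf{E}(\eta_2)\big) + g(\rho^- - \rho^+)\delta\eta =: \delta g, \\
\f{1}{\mu^+}G^+(\eta_1)\delta f^+ - \f{1}{\mu^-}G^-(\eta_1)\delta f^- = \f{1}{\mu^-}[G^-(\eta_1) - G^-(\eta_2)]f_2^- - \f{1}{\mu^+}[G^+(\eta_1) - G^+(\eta_2)]f_2^+ =: \delta h ,
\end{cases}
\end{equation*}
so that $\delta f^\pm$ solves the $\eta_1$-transmission problem with inhomogeneous jump datum $\delta g$ and inhomogeneous compatibility datum $\delta h$. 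The estimate \eqref{DeltaFpmEst} then follows once $\|\delta g\|_{H^r}$ and $\|\delta h\|_{H^{r-1}}$ are controlled.

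First I would establish the linear a priori bound $\|\delta f^\pm\|_{\tilde{H}^r_\pm}\lesssim_{\|\eta_1\|_{H^s}}\|\delta g\|_{H^r}+\|\delta h\|_{H^{r-1}}$ for $r\in[\f12,s-\f12]$. This is a minor extension of the variational argument behind Lemma \ref{t:fpmHHalfEst} and \eqref{fpmEstR} to allow a nonzero right-hand side in the compatibility equation. Concretely, write $\delta f^\pm = \psi + \delta\tilde f^\pm$ with $\psi := \big(\f{1}{\mu^+}G^+(\eta_1)-\f{1}{\mu^-}G^-(\eta_1)\big)^{-1}\delta h$: since $G^\pm(\eta_1)=\mp|D|+R^\pm(\eta_1)$ with $R^\pm$ of order strictly less than one by \eqref{GfDRf}, the operator $\f{1}{\mu^+}G^+(\eta_1)-\f{1}{\mu^-}G^-(\eta_1)$ is elliptic of order one modulo constants, $\delta h$ has zero mean (each $G^\pm(\eta)v$ integrates to zero), and $\|\psi\|_{\tilde{H}^r_\pm}\lesssim_{\|\eta_1\|_{H^s}}\|\delta h\|_{H^{r-1}}$. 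Then $\delta\tilde f^\pm$ solves the transmission problem with data $(\delta g,0)$, to which \eqref{fpmEstR} applies (with a general $H^r$ source in place of the elastic--gravity combination), giving $\|\delta\tilde f^\pm\|_{\tilde{H}^r_\pm}\lesssim_{\|\eta_1\|_{H^s}}\|\delta g\|_{H^r}$.

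It then remains to bound the two sources, which is what dictates the range $r\in[\f12,s-\f32]$. For $\delta g$, the gravity part contributes $(\rho^- - \rho^+)g\|\delta\eta\|_{H^r}$ directly, while for the elastic part I would use the $H^r$-version of the difference estimate \eqref{ElasticDiff}, obtained by the same G\^{a}teaux-derivative computation as in Lemma \ref{t:REEst} combined with \eqref{ellEst}, namely $\|\mathbf{E}(\eta_1)-\mathbf{E}(\eta_2)\|_{H^r}\lesssim_{\|(\eta_1,\eta_2)\|_{H^s}}\|\delta\eta\|_{H^{r+4}}+\big(\|\eta_1\|_{H^{s+\f52}}+\|\eta_2\|_{H^{s+\f52}}\big)\|\delta\eta\|_{H^s}$, valid whenever $r+4\le s+\f52$, i.e.\ $r\le s-\f32$. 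For $\delta h$, I would apply the Dirichlet--Neumann difference estimate \eqref{GetaDiff}, $\|[G^\pm(\eta_1)-G^\pm(\eta_2)]f_2^\pm\|_{H^{r-1}}\lesssim_{\|(\eta_1,\eta_2)\|_{H^s}}\|\delta\eta\|_{H^s}\|f_2^\pm\|_{\tilde{H}^r_\pm}$, and then bound $\|f_2^\pm\|_{\tilde{H}^r_\pm}$ by \eqref{fpmEstR}; since $r+4\le s+\f52$ this is $\lesssim_{\|\eta_2\|_{H^s}}(\sigma+(\rho^- - \rho^+)g)\|\eta_2\|_{H^{s+\f52}}$. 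Collecting these contributions yields \eqref{DeltaFpmEst}.

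The main obstacle is the inhomogeneous compatibility datum $\delta h$: one must verify that the variational well-posedness of \eqref{TwoMuskatTwo} persists with a nonzero right-hand side in the second equation — equivalently, that $\f{1}{\mu^+}G^+(\eta_1)-\f{1}{\mu^-}G^-(\eta_1)$ is boundedly invertible modulo constants on the scale $\tilde{H}^r$ — and one must track the regularity budget carefully so that the norms of $f_2^\pm$ produced by \eqref{fpmEstR} stay within the range controlled by $Z^s(T)$; this is exactly what forces $r\le s-\f32$ rather than the full range $[\f12,s-\f12]$ available in \eqref{fpmEstR}. The only genuinely new computation is the elastic difference estimate at regularity $H^r$ across the whole range; everything else is bookkeeping on top of the cited results.
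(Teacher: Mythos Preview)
Your decomposition of the difference system and your estimates for the source terms $\delta g$ and $\delta h$ coincide with the paper's: both use \eqref{GetaDiff} together with \eqref{fpmEstR} for $\delta h$, and an $H^r$-level version of the elastic difference estimate (the computation behind Lemma~\ref{t:REEst} and \eqref{ElasticDiff}) for $\delta g$. The methodological difference is in how the $\tilde H^r_\pm$ bound on $\delta f^\pm$ is extracted from the linearized transmission problem. The paper does not invert $\tfrac{1}{\mu^+}G^+(\eta_1)-\tfrac{1}{\mu^-}G^-(\eta_1)$ as you propose; instead it writes $|D|\delta f^-$ explicitly in terms of the sources plus a remainder containing $R^\pm(\eta_1)\delta f^\pm$, uses the $\delta$-smoothing of $R^\pm$ from \eqref{GfDRf} to get $\|\delta f^\pm\|_{H^{1,r}}\lesssim \|\delta f^\pm\|_{\tilde H^{r-\delta}_\pm}+(\text{sources})$, and then bootstraps in increments of $\delta$ from the variational base case $r=\tfrac12$ (obtained from the variational formulation in \cite{MR4090462}). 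Your route---absorb $\delta h$ via a particular solution $\psi$ and then invoke the homogeneous transmission estimate for $(\delta g,0)$---is a valid alternative, but constructing $\psi$ with $\|\psi\|_{\tilde H^r_\pm}\lesssim\|\delta h\|_{H^{r-1}}$ requires precisely the same two ingredients (variational solvability at $\tilde H^{1/2}_\pm$ to handle the low-frequency part encoded in the screened space, and the smoothing of $R^\pm$ for a Neumann-series argument at higher $r$), so the work is relocated rather than saved. The paper's induction avoids having to verify separately that the inverse respects the screened structure, while your formulation makes the linear dependence on $(\delta g,\delta h)$ more transparent.
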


\begin{proof}
We write
\begin{align*}
&\delta f^- - \delta f^+ = \delta k = \sigma (\mathbf{E}(\eta_1)-\mathbf{E}(\eta_2)) + g(\rho^- - \rho^+)\delta \eta, \\
&\f{1}{\mu^-}G^-(\eta_1)\delta f^- - \f{1}{\mu^+}G^+(\eta_1)\delta f^+ = \f{1}{\mu^+}[G^+(\eta_1)-G^+(\eta_2)]f_2^+ - \f{1}{\mu^-}[G^-(\eta_1)-G^-(\eta_2)]f_2^-.
\end{align*}
Applying $G^+(\eta_1)$ to the first equation, and using the fact that $G^\pm(\eta_1)\delta f^\pm = \mp |D|\delta f^\pm + R^\pm(\eta_1)\delta f^\pm$, we get
\begin{align*}
&|D|\delta f^- = \f{\mu^-}{\mu^+ + \mu^-}(\sigma|D|[\mathbf{E}(\eta_1)-\mathbf{E}(\eta_2)] + g(\rho^- - \rho^+)|D|\delta \eta) + \f{\mu^+ \mu^-}{\mu^++\mu^-}F, \\
F =& \f{1}{\mu^+}R^+(\eta_1)\delta f^+ -  \f{1}{\mu^-}R^-(\eta_1)\delta f^- + \f{1}{\mu^+}[G^+(\eta_1)-G^+(\eta_2)]f^+_2  - \f{1}{\mu^-}[G^-(\eta_1)-G^-(\eta_2)]f_2^-.
\end{align*}
Using the contraction estimate \eqref{ElasticDiff}, 
\begin{equation*}
 \|\delta f^-\|_{H^r}  \lesssim_{\|(\eta_1, \eta_2)\|_{H^{s}\times H^{s}}} \sigma \|\delta \eta \|_{H^{r+4}}+ (\rho^- -\rho^+)g \|\delta \eta\|_{H^r} + \|F \|_{H^{r-1}} + \sigma \| (\eta_1,\eta_2) \|_{H^{r+4}\times H^{r+4}}\| \delta \eta\|_{H^{s}} .
\end{equation*}
Using estimates \eqref{GfDRf}, \eqref{GetaDiff} and \eqref{fpmEstR},
\begin{align*} 
&\| R^\pm(\eta_1)\delta f^\pm\|_{H^{r-1}} \lesssim_{\|(\eta_1,\eta_2)\|_{H^s\times H^s}}\|\delta f^{\pm}\|_{\tilde{H}^{r-\delta}_{\pm}}, \\
 &\|[G^{\pm}(\eta_1) - G^{\pm}(\eta_2)]f_2^{\pm}\|_{H^{s-\f52}} \lesssim_{\|(\eta_1, \eta_2)\|_{H^s \times H^s}} (\sigma + (\rho^- -\rho^+)g)\|\delta \eta\|_{H^s} \|(\eta_1, \eta_2)\|_{H^{s+\f52}\times H^{s+\f52}}.
\end{align*}
For $r\in[\f12,s-\f32]$, we then have
\begin{align*}
\|F\|_{H^{r-1}}
\lesssim_{\|(\eta_1,\eta_2)\|_{H^s\times H^s}}\|\delta f^{\pm}\|_{\tilde{H}^{r-\delta}_{\pm}}
+(\sigma + (\rho^- -\rho^+)g)\|\delta \eta\|_{H^s} \|(\eta_1, \eta_2)\|_{H^{s+\f52}\times H^{s+\f52}}.
\end{align*}
Note that $\delta f^+=\delta f^--\delta k$, we then obtain that
\begin{align*}
\|\delta f^{\pm}\|_{H^r}  \lesssim_{\|(\eta_1, \eta_2)\|_{H^{s}\times H^{s}}}& \|\delta f^{\pm}\|_{\tilde{H}^{r-\delta}_{\pm}}+\sigma \|\delta \eta \|_{H^{r+4}}+ (\rho^- -\rho^+)g \|\delta \eta\|_{H^r}\\
&+ (\sigma + (\rho^- -\rho^+)g)\| (\eta_1,\eta_2) \|_{H^{s+\f52}\times H^{s+\f52}}\| \delta \eta\|_{H^s} .
\end{align*}
From the definition of $\tilde{H}^{r}$, following the proof of (4.22) in \cite{MR4131404}, we can obtain
\begin{align*}
&\|\delta f^{\pm}\|_{H^{1,r}}\lesssim_{\|(\eta_1,\eta_2)\|_{H^s\times H^s}}\|\delta f^{\pm}\|_{\tilde{H}^{\f12}_{\pm}}+\|\delta f^{\pm}\|_{H^{1,r-\delta}}\\
&+\sigma \|\delta \eta \|_{H^{r+4}}+ (\rho^- -\rho^+)g \|\delta \eta\|_{H^r}+ (\sigma + (\rho^- -\rho^+)g)\| (\eta_1,\eta_2) \|_{H^{s+\f52}\times H^{s+\f52}}\| \delta \eta\|_{H^s}.
\end{align*}

By using the variational form the definition for two-phase Muskat problem in \cite{MR4090462} and the definition of $\tilde{H}^{\f12}_{\pm}$, we have
\begin{align*}
&\|\delta f^{\pm}\|_{\tilde{H}^{\f12}_{\pm}}  \lesssim_{\|(\eta_1, \eta_2)\|_{H^{s}\times H^{s}}} \|\mathbf{E}(\eta_1)-\mathbf{E}(\eta_2)\|_{H^{\f12}}+ \|\delta\eta\|_{H^s}(\|\mathbf{E}(\eta_1)\|_{H^{\f12}}+\|\mathbf{E}(\eta_2)\|_{H^{\f12}})\\
\lesssim&_{\|(\eta_1, \eta_2)\|_{H^{s}\times H^{s}}}\sigma\|\delta\eta\|_{\f92}+(\rho^--\rho^+)g\|\delta\eta\|_{H^{\f12}}+(\sigma+(\rho^--\rho^+)g)\|\delta\eta\|_{H^s}\left(\|\eta_1\|_{H^{\f92}}+\|\eta_2\|_{H^{\f92}}\right).
\end{align*}
From the above estimates, by using an induction argument, we  obtain \eqref{DeltaFpmEst}.
\end{proof}

We then prove the contraction estimate for two solutions $\eta_1$ and $\eta_2$.
\begin{proposition} \label{t:propConTwo}
Let $s\geq 2$.
Suppose that $\eta_1$ and $\eta_2$ are two solutions of one-phase Muskat problem \eqref{TwoMuskatOne}-\eqref{TwoMuskatTwo} in $Z^s$ with the condition \eqref{BottomH}, then
\begin{equation*} 
 \|\eta_1 - \eta_2\|_{Z^s(T)} \lesssim_{\|(\eta_1, \eta_2)\|_{Z^s(T) \times Z^s(T)}} \|(\eta_1 - \eta_2)(0,\cdot) \|_{H^s}.
\end{equation*}
The implicit constant in the inequality depends only on $\big(s,h, \sigma, \mu^\pm, (\rho^- -\rho^+)g\big)$.
\end{proposition}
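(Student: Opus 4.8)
The plan is to mirror, step for step, the one-phase argument of Proposition~\ref{t:propConOne}, with the contraction estimate \eqref{DeltaFpmEst} for $\delta f^\pm := f_1^\pm - f_2^\pm$ as the one genuinely new ingredient. Writing $\delta\eta = \eta_1 - \eta_2$, the transport equation \eqref{TwoMuskatOne} gives
\begin{equation*}
\p_t\delta\eta = -\f{1}{\mu^-}G^-(\eta_1)\delta f^- - \f{1}{\mu^-}\big[G^-(\eta_1)-G^-(\eta_2)\big]f_2^-.
\end{equation*}
The last term is already a lower-order remainder: by the difference estimate \eqref{GetaDiff} together with \eqref{fpmEstR} it is bounded in $H^{s-\f52}$ by $(\sigma + (\rho^- -\rho^+)g)\|\delta\eta\|_{H^s}\|(\eta_1,\eta_2)\|_{H^{s+\f52}\times H^{s+\f52}}$, exactly as in the proof of \eqref{DeltaFpmEst}. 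For the first term I would paralinearize via \eqref{GfDRf}, $G^-(\eta_1)\delta f^- = |D|\delta f^- + R^-(\eta_1)\delta f^-$, and then substitute the identity for $|D|\delta f^-$ obtained in the proof of \eqref{DeltaFpmEst}, namely
\begin{equation*}
|D|\delta f^- = \f{\sigma\mu^-}{\mu^+ + \mu^-}|D|\big[\mathbf{E}(\eta_1)-\mathbf{E}(\eta_2)\big] + \f{\mu^-(\rho^- -\rho^+)g}{\mu^+ + \mu^-}|D|\delta\eta + \f{\mu^+\mu^-}{\mu^+ + \mu^-}F .
\end{equation*}

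Next I would extract the principal part and bound everything else. Following Proposition~\ref{t:propConOne}, split $|D|[\mathbf{E}(\eta_1)-\mathbf{E}(\eta_2)] = |D|T_{\ell_1}\delta\eta + |D|T_{\ell_1-\ell_2}\eta_2 + |D|\big(R_E(\eta_1)-R_E(\eta_2)\big)$, handling $|D|T_{\ell_1-\ell_2}\eta_2$ by \eqref{tdEtaEllEtaEta} and $|D|(R_E(\eta_1)-R_E(\eta_2))$ by Lemma~\ref{t:REEst}. The remaining pieces are estimated in $H^{s-\f52}$: for $R^-(\eta_1)\delta f^-$, apply \eqref{GfDRf} and then \eqref{DeltaFpmEst} at a regularity index in the admissible range $[\f12, s-\f32]$ (this is where $s\geq 2$ is used); for $|D|\delta\eta$, the trivial bound by $\|\delta\eta\|_{H^{s-\f32}}$; and for $F$, the combination of \eqref{GfDRf}, \eqref{GetaDiff}, \eqref{fpmEstR} and \eqref{DeltaFpmEst}. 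Collecting terms, $\delta\eta$ solves
\begin{equation*}
\p_t\delta\eta = -\f{\sigma}{\mu^+ + \mu^-}|D|T_{\ell_1}\delta\eta + \mathcal{R},
\end{equation*}
with
\begin{equation*}
\|\mathcal{R}\|_{H^{s-\f52}} \lesssim_{\|(\eta_1,\eta_2)\|_{H^s\times H^s}} \|\delta\eta\|_{H^{s+\f52-\delta}} + \big(\|\eta_1\|_{H^{s+\f52}}+\|\eta_2\|_{H^{s+\f52}}\big)\|\delta\eta\|_{H^s},
\end{equation*}
which has exactly the structure of \eqref{ParaDiffMuskatOne} with $\mu^-$ replaced by $\mu^+ + \mu^-$.

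From here the proof is the energy estimate of Proposition~\ref{t:OnePhaseEst}, applied to the equation above: apply $\langle T_{(1+\eta_{1,x}^2)^{-\f{s}{2}}}D\rangle^s$ and pair with $\delta\eta_s := \langle T_{(1+\eta_{1,x}^2)^{-\f{s}{2}}}D\rangle^s\delta\eta$ in $L^2$; use the strict ellipticity of the order-$5$ symbol $m^{[5]} = (1+\eta_{1,x}^2)^{-\f52}|\xi|^5$ (with constants uniform in $t$ since $\eta_1\in Z^s(T)$) to produce the coercive dissipation $\f{1}{\mathcal{F}}\|\delta\eta\|_{H^{s+\f52}}^2$; control the commutator $[\langle T_{(1+\eta_{1,x}^2)^{-\f{s}{2}}}D\rangle^s,\,|D|T_{\ell_1}]$ and the self-adjointness defect of $T_{\sqrt{m^{[5]}}}$ by symbolic calculus; estimate the contribution of $\mathcal{R}$ through the duality $H^{-\f52+\delta}\times H^{\f52-\delta}$; and then interpolate $\|\delta\eta\|_{H^{s+\f52-\delta}}$ between $H^s$ and $H^{s+\f52}$ and absorb by Young's inequality. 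This gives
\begin{equation*}
\f{d}{dt}\|\delta\eta\|_{H^s}^2 \leq -\f{1}{\mathcal{F}(\|(\eta_1,\eta_2)\|_{Z^s(T)})}\|\delta\eta\|_{H^{s+\f52}}^2 + \mathcal{F}(\|(\eta_1,\eta_2)\|_{Z^s(T)})\,\|(\eta_1,\eta_2)\|_{H^{s+\f52}\times H^{s+\f52}}^2\,\|\delta\eta\|_{H^s}^2 .
\end{equation*}
Since $t\mapsto\|(\eta_1,\eta_2)(t)\|_{H^{s+\f52}}^2$ is integrable on $[0,T]$ by the definition of $Z^s(T)$, Gronwall's inequality controls $\|\delta\eta\|_{L^\infty([0,T];H^s)}$ by $\|\delta\eta(0)\|_{H^s}$, and integrating the dissipation then controls $\|\delta\eta\|_{L^2([0,T];H^{s+\f52})}$ as well; together these are the asserted bound, with implicit constant depending only on $(s,h,\sigma,\mu^\pm,(\rho^- -\rho^+)g)$. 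Combined with the a~priori estimate of Proposition~\ref{t:TwoPhaseEst}, this also finishes the proof of Theorem~\ref{t:MainOne}(ii), in particular \eqref{StabilityTwo}.

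I expect the main obstacle to be the bookkeeping for $\mathcal{R}$: unlike the one-phase case, $\delta f^\pm$ is defined only implicitly through the transmission system \eqref{TwoMuskatTwo}, so every occurrence of $\delta f^\pm$ inside $F$ and inside $R^-(\eta_1)\delta f^-$ must be replaced by its $\tilde{H}^r_\pm$-bound \eqref{DeltaFpmEst}, and one must verify that all Sobolev indices fed to \eqref{GfDRf} and \eqref{DeltaFpmEst} stay inside the admissible window $[\f12, s-\f32]$, so that at most a $\delta$-derivative above $H^s$ is lost and the top-order term can be absorbed by the dissipation. Once $\mathcal{R}$ is in the displayed form, the remaining energy argument is identical to that of Propositions~\ref{t:OnePhaseEst} and \ref{t:propConOne}.
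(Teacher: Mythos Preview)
Your proposal is correct and follows essentially the same approach as the paper: decompose $\p_t\delta\eta$ into $-\f{1}{\mu^-}G^-(\eta_1)\delta f^-$ plus a Dirichlet--Neumann difference term, paralinearize $G^-(\eta_1)\delta f^-$, substitute the identity for $|D|\delta f^-$ established in the lemma preceding the proposition, extract the principal part $|D|T_{\ell_1}\delta\eta$ from $|D|[\mathbf{E}(\eta_1)-\mathbf{E}(\eta_2)]$, and then invoke the energy argument of Proposition~\ref{t:propConOne}. Your coefficient $-\f{\sigma}{\mu^++\mu^-}$ in front of $|D|T_{\ell_1}\delta\eta$ is indeed the correct one (the paper's displayed equation \eqref{ParaDiffMuskatTwo} writes $\mu^-$, which appears to be a typo), and your bookkeeping of the admissible index range for \eqref{DeltaFpmEst} and \eqref{GfDRf} is on point.
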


\begin{proof}
From the equation \eqref{TwoMuskatOne}, $\delta \eta$ solves the equation
\begin{equation*}
\p_t \delta\eta = -\f{1}{\mu^-}G^-(\eta_1)\delta f^- - \f{1}{\mu^-}[G^-(\eta_1) - G^-(\eta_2)]f_2^-.
\end{equation*}
Using estimates \eqref{GetaDiff} and \eqref{fpmEstR},
\begin{equation*}
 \|[G^-(\eta_1) - G^-(\eta_2)]f_2^- \|_{H^{s-\f52}} \lesssim_{\|(\eta_1, \eta_2)\|_{H^s \times H^s}} (\sigma + (\rho^- -\rho^+)g)\|\delta \eta\|_{H^s} \|(\eta_1, \eta_2)\|_{H^{s+\f52}\times H^{s+\f52}}.
\end{equation*}
Using the paralinearization of the Dirichlet-Neumann operator \eqref{GfDRf}, we write
\begin{equation*}
    G^-(\eta_1) \delta f^- = |D|\delta f^- + R^-(\eta_1)\delta f^-,
\end{equation*}
where the remainder term satisfies
\begin{align*}
 &\|R^-(\eta_1)\delta f^- \|_{H^{s-\f52}}\lesssim_{\|(\eta_1, \eta_2)\|_{H^s \times H^s}}\|\delta f^-\|_{\tilde{H}^{s-\f32-\delta}}\\
 &\lesssim_{\|(\eta_1, \eta_2)\|_{H^s\times H^s}} \sigma \|\delta \eta \|_{H^{s+\f52-\delta}}+ (\rho^- -\rho^+)g \|\delta \eta\|_{H^{s-\f32-\delta}}\\
 &+ (\sigma + (\rho^- -\rho^+)g)\|\delta \eta\|_{H^s} \|(\eta_1, \eta_2)\|_{H^{s+\f52}\times H^{s+\f52}},
\end{align*}
where we have used \eqref{DeltaFpmEst} with $r=s-\f32-\delta$.
We then get that
\begin{align*}
\p_t\delta\eta=-\f{1}{\mu^-}|D|\delta f^-+\mathcal{R}_1,
\end{align*}
where the remainder term satisfies
\begin{align*}
\|\mathcal{R}_1\|_{H^{s-\f52}}\lesssim_{\|(\eta_1, \eta_2)\|_{H^s\times H^s}}(\sigma+(\rho^--\rho^+)g) \left(\|\delta \eta \|_{H^{s+\f52-\delta}}
 + \| \delta \eta\|_{H^s}\|(\eta_1, \eta_2)\|_{H^{s+\f52}\times H^{s+\f52}}\right).
\end{align*}
From the computations of above lemma, 
\begin{align*}
&|D|\delta f^- = \f{\mu^-}{\mu^+ + \mu^-}(\sigma|D|[\mathbf{E}(\eta_1)-\mathbf{E}(\eta_2)] + g(\rho^- - \rho^+)|D|\delta \eta) + \f{\mu^+ \mu^-}{\mu^++\mu^-}F,
\end{align*}
where the remainder term satisfies
\begin{align*}
\|F\|_{H^{s-\f52}}\lesssim_{\|(\eta_1, \eta_2)\|_{H^s\times H^s}}(\sigma+(\rho^--\rho^+)g) \left(\|\delta \eta \|_{H^{s+\f52-\delta}}
 + \| \delta \eta\|_{H^s}\|(\eta_1, \eta_2)\|_{H^{s+\f52}\times H^{s+\f52}}\right).
\end{align*}
From the calculations of \eqref{ParaDiffMuskatOne}, we have
\begin{align*}
\||D|[\mathbf{E}(\eta_1)-\mathbf{E}(\eta_2)]-|D|T_{\ell_1}\delta\eta\|_{H^{s-\f52}}
\lesssim_{\|(\eta_1, \eta_2)\|_{H^s\times H^s}}\|\delta \eta \|_{H^{s+\f52-\delta}}
 + \| \delta \eta\|_{H^s}\|(\eta_1, \eta_2)\|_{H^{s+\f52}\times H^{s+\f52}}.
\end{align*}
Thus, we obtain that
\begin{equation} \label{ParaDiffMuskatTwo}
\p_t\delta \eta=-\f{\sigma}{\mu^-}|D|T_{\ell_1}\delta\eta+\mathcal{R}_2,
\end{equation}
where the remainder term $\mathcal{R}_2$ satisfies
\begin{align*}
\|\mathcal{R}_2\|_{H^{s-\f52}}\lesssim_{\|(\eta_1, \eta_2)\|_{H^s\times H^s}}(\sigma+(\rho^--\rho^+)g) \left(\|\delta \eta \|_{H^{s+\f52-\delta}}
 + \| \delta \eta\|_{H^s}\|(\eta_1, \eta_2)\|_{H^{s+\f52}\times H^{s+\f52}}\right).
\end{align*}
Since \eqref{ParaDiffMuskatTwo} takes the same form as \eqref{ParaDiffMuskatOne}, the result follows from an argument analogous to that in Proposition \ref{t:propConOne}.
\end{proof}

\subsection{Proof of local well-posedness}
We now finish the proof of local well-posedness by adapting the a priori estimates and contraction estimates we have shown.
The argument here is essentially the same as Section $3.4$ and $4.4$ in \cite{MR4131404}.
We assume that $\eta_0 \in H^s(\R)$ for $s\geq 2$ satisfying dist$(\eta_0, \Gamma^\pm)>2h$. 

We first begin with the one-phase Muskat problem.
Let $J_\varepsilon$ be the mollifier that selects the frequency portion not greater than $\varepsilon^{-1}$ of a function.
Then we consider the approximate solution $\eta_\varepsilon$ that solves the  following ODE in the Banach space 
\begin{equation*}
    \p_t \eta_\varepsilon = -\f{1}{\mu^-} J_\varepsilon [G^-(J_\varepsilon \eta_\varepsilon)(\sigma \mathbf{E}(J_\varepsilon\eta_\varepsilon)+ \rho^-gJ_\varepsilon\eta_\varepsilon)], \quad  \eta_\varepsilon|_{t = 0} = \eta(0, x).
\end{equation*}
The solution $\eta_\varepsilon$ exists on some maximal time interval $(0, T_\varepsilon]$. 
A priori estimates Proposition \ref{t:OnePhaseEst} and Lemma \ref{t:DistBound} also hold for $\eta_\varepsilon$.
Hence, using a continuity argument, there exists a positive time $T<T_\varepsilon$ for all $\varepsilon \in (0,1]$ such that
\begin{equation*}
  \| \eta_\varepsilon\|_{Z^s(T)} \lesssim \mathcal{F}\big(\|\eta(0,\cdot)\|_{H^s}\big), \quad  \inf_{t\in[0,T]} \text{dist}(\eta_\varepsilon(t), \Gamma^{-}) >h,
\end{equation*}
for some function $\mathcal{F}$ that depends on $\big(s,h, \frac{\sigma}{\mu^-},  \frac{\rho^- g}{ \mu^-}\big)$.
The contraction estimate Proposition \ref{t:propConOne} also holds for $\eta_\varepsilon$.
By choosing $\varepsilon \rightarrow 0$, we obtain that the limiting function $\eta$ is a solution of \eqref{OneMuskat} in $Z^s(T)$ with initial data $\eta_0$.
The uniqueness of the solution and Lipschitz dependence of initial data for the solution follows directly from Proposition \ref{t:propConOne}.

As for two-phase Muskat problem, we consider the ODE
\begin{equation*}
    \p_t \eta_\varepsilon = -\f{1}{\mu^-} J_\varepsilon [G^-(J_\varepsilon \eta_\varepsilon)(J_\varepsilon f_\varepsilon^-)], \quad  \eta_\varepsilon|_{t = 0} = \eta(0, x),
\end{equation*}
where functions $f^\pm_\varepsilon$ solve
\begin{equation*}  
\begin{cases}
f^-_\varepsilon - f^+_\varepsilon = \sigma \mathbf{E}(\eta_\varepsilon) + g(\rho^- - \rho^+)\eta_\varepsilon, \\
\frac{1}{\mu^+}G^+(\eta_\varepsilon)f^+_\varepsilon = \frac{1}{\mu^-}G^-(\eta_\varepsilon)f^-_\varepsilon.
\end{cases}
\end{equation*}
The solvability and regularity of $f^\pm$ follow from Lemma \ref{t:fpmHHalfEst}.
A priori estimates Proposition \ref{t:TwoPhaseEst}, and the contraction estimate Proposition \ref{t:propConTwo} also hold for $\eta_\varepsilon$.
By passing to the limit $\varepsilon \rightarrow 0$, and doing the same as in the one-phase Muskat case, we obtain the local well-posedness of two-phase Muskat problem.

\section{Global well-posedness of Muskat problem with an elastic interface} \label{s:Global}
In this section, we prove Theorem \ref{t:global}, establishing the global well-posedness for the Muskat problem with an elastic interface under the assumption of small initial data.
This section transitions from the local energy-based arguments to an integral formulation, utilizing fixed-point lemmas to prove that solutions exist for all time $T>0$.

The overarching strategy in this section is to treat the nonlinear system as a perturbation of a fractional heat equation.
By rewriting the equations in integral form, we can apply space-time estimates of the fractional heat equation to show that the nonlinearities remain small if the initial data is sufficiently small.

\subsection{Global well-posedness for one-phase Muskat problem} \label{s:GlobalOneMuskat}
For one-phase Muskat problem, we rewrite the governing equation \eqref{OneMuskat} as:
\begin{equation*} 
\begin{aligned}
&\left(\p_t + \f{\sigma}{\mu^-}|D|^5 + \frac{\rho^- g}{\mu^-}|D| \right) \eta\\
=& -\f{\sigma}{\mu^-}R^-(\eta)\mathbf{E}(\eta) - \f{\sigma}{\mu^-} |D|(T_{\ell(\eta)}\eta+R_{E(\eta)}\eta - |D|^4\eta)  -\f{\rho^- g}{\mu^-} R^-(\eta) \eta.
\end{aligned}
\end{equation*}
Hence, $\eta$ is the solution to the integral equation
\begin{equation} \label{OneMuskatIntegral}
    \eta(t) = e^{- \f{\sigma}{\mu^-}t|D_x|^5 - \frac{\rho^- g}{\mu^-}t|D_x|} \eta_0 - \mathcal{B}(\eta, \eta),
\end{equation}
where the bilinear form is given by
\begin{equation*} 
\begin{aligned}
\mathcal{B}(\eta, f)(t,x) = \f{\sigma}{\mu^-}\int_0^t& e^{- \f{\sigma}{\mu^-}(t- \tau)|D_x|^5 - \frac{\rho^- g}{\mu^-}(t-\tau)|D_x|}\Big( R^-(\eta)(T_{\ell(\eta)} f + R_{E(\eta)} (f))\\
+&  |D|(T_{\ell(\eta)}f+R_{E(\eta)}f - |D|^4f)  +\f{\rho^- g}{\sigma} R^-(\eta) f \Big) d\tau.  
\end{aligned}
\end{equation*}

Let $X^s([0, T])$ denote the Banach space endowed with the equivalent norm
\begin{equation*}
 \|\eta\|_{X^s([0, T])}=\| \eta\|_{\wt L^\infty([0, T]; H^s)}+\f{\sigma}{\mu^-}\| \eta\|_{\wt L^1([0, T]; H^{s+5})}.
\end{equation*}
We prove the following result for the equation \eqref{OneMuskatIntegral}.
\begin{lemma} \label{t:OneMuskatFixed}
Let  $s > \f32$, there exists a small number $\delta>0$ such that if  $\| \eta_0\|_{H^s}<\delta$, then 
\begin{equation*}
    e^{- \f{\sigma}{\mu^-}t|D_x|^5 - \frac{\rho^- g}{\mu^-}t|D_x|} \eta_0 - \mathcal{B}(\eta, \eta),
\end{equation*} 
 has a unique fixed point $\eta$ in $X^s([0, T])$ for any $T>0$ with norm less than $C\| \eta_0\|_{H^s}$.
\end{lemma}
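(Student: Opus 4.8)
The statement is exactly the hypothesis of the abstract fixed-point Lemma~\ref{t:fixedpoint}: writing $\Phi(\eta)=e^{-\f{\sigma}{\mu^-}t|D_x|^5-\f{\rho^- g}{\mu^-}t|D_x|}\eta_0-\mathcal{B}(\eta,\eta)$, it suffices to establish, uniformly in $T>0$, the two bounds
\begin{equation*}
\big\| e^{-\f{\sigma}{\mu^-}t|D_x|^5-\f{\rho^- g}{\mu^-}t|D_x|}\eta_0\big\|_{X^s([0,T])}\le C_0\|\eta_0\|_{H^s},
\qquad
\|\mathcal{B}(\eta,f)\|_{X^s([0,T])}\le M\,\|\eta\|_{X^s([0,T])}\|f\|_{X^s([0,T])} ,
\end{equation*}
with $C_0,M$ depending only on $s$ (and $\rho^- g/\mu^-$). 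Granting these, one picks $\delta>0$ with $4MC_0\delta<1$; for $\|\eta_0\|_{H^s}<\delta$ Lemma~\ref{t:fixedpoint} produces a unique fixed point $\eta\in X^s([0,T])$ with $\|\eta\|_{X^s([0,T])}\le 2C_0\|\eta_0\|_{H^s}=:C\|\eta_0\|_{H^s}$, for every $T>0$; uniqueness and the quantitative bound are part of the abstract lemma.

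\textbf{The linear estimate.} This is a Littlewood--Paley computation, and is the content of the parabolic space-time estimates \eqref{SpaceTimeOne}--\eqref{SpaceTimeTwo}. Setting $m(\xi)=\f{\sigma}{\mu^-}|\xi|^5+\f{\rho^- g}{\mu^-}|\xi|$, one has on each dyadic block $\|P_j e^{-tm(D)}\eta_0\|_{L^2}\le e^{-c\,t\,m(2^j)}\|P_j\eta_0\|_{L^2}$ with $m(2^j)\gtrsim \f{\sigma}{\mu^-}2^{5j}+\f{\rho^- g}{\mu^-}2^j$. Taking $\sup_{t}$ controls the $\wt L^\infty H^s$ part by $\|\eta_0\|_{H^s}$, while $\f{\sigma}{\mu^-}\int_0^\infty 2^{5j}e^{-c\,t\,m(2^j)}\,dt\le \f{\sigma}{\mu^-}2^{5j}/(c\,m(2^j))\le 1/c$ controls $\f{\sigma}{\mu^-}2^{(s+5)j}\|P_j e^{-\tau m(D)}\eta_0\|_{L^1_\tau L^2}$ by $2^{sj}\|P_j\eta_0\|_{L^2}$; summing in $\ell^2(\Z)$ gives the bound on the $\f{\sigma}{\mu^-}\wt L^1 H^{s+5}$ part. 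Both bounds are manifestly $T$-independent, and the low-frequency piece converges using the $|D|$ part of $m$ (or, when $g=0$, the extra positive powers of $2^j$ carried by the nonlinear terms below).

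\textbf{The bilinear estimate (main point).} Write $\mathcal{B}(\eta,f)(t)=\f{\sigma}{\mu^-}\int_0^t e^{-(t-\tau)\mathcal L}\mathcal N(\eta,f)(\tau)\,d\tau$, where $\mathcal L=\f{\sigma}{\mu^-}|D|^5+\f{\rho^- g}{\mu^-}|D|$ and $\mathcal N(\eta,f)=R^-(\eta)\big(T_{\ell(\eta)}f+R_{E(\eta)}f\big)+|D|\big(T_{\ell(\eta)}f+R_{E(\eta)}f-|D|^4 f\big)+\f{\rho^- g}{\sigma}R^-(\eta)f$. The same Littlewood--Paley bound together with Young's inequality in time (again \eqref{SpaceTimeOne}--\eqref{SpaceTimeTwo}) gives the Duhamel smoothing estimate $\|\mathcal B(\eta,f)\|_{X^s([0,T])}\lesssim \f{\sigma}{\mu^-}\|\mathcal N(\eta,f)\|_{\wt L^1([0,T];H^s)}$. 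The crux is the fixed-time estimate
\begin{equation*}
\|\mathcal N(\eta,f)\|_{H^s}\lesssim \|\eta\|_{H^s}\|f\|_{H^{s+5}}+\|f\|_{H^s}\|\eta\|_{H^{s+5}}\qquad(\|\eta\|_{H^s}\lesssim 1).
\end{equation*}
For the principal paradifferential difference $|D|(T_{\ell(\eta)}f-|D|^4 f)$ this follows from \eqref{ellEst} (shifted up by one derivative), whose coefficient is controlled by $\|\eta_x\|_{C^\epsilon_*}\lesssim\|\eta\|_{H^s}$ since $s>\f32$; for the Dirichlet--Neumann remainders $R^-(\eta)(T_{\ell(\eta)}f+R_{E(\eta)}f)$ and $|D|R_{E(\eta)}f$ one uses \eqref{GfDRf}, \eqref{GetaDiff} (with $R^-(0)=0$ to make the dependence on $\eta$ linear for small $\eta$) and the paralinearization remainder estimate \eqref{REEtaEst} rerun with the high-frequency factor placed at regularity $H^{s+5-\delta}\subset H^{s+5}$; the gravity term $\f{\rho^- g}{\sigma}R^-(\eta)f$ loses only $\sim 1$ derivative and is even better. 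In each case the five lost derivatives are split as $0$ on one factor and at most $5$ on the other, so Hölder in time places the $H^s$-factor in $\wt L^\infty_t$ and the $H^{s+5}$-factor in $\wt L^1_t$; absorbing the $\f{\sigma}{\mu^-}$ prefactor into the $X^s$-norm then yields $\|\mathcal B(\eta,f)\|_{X^s([0,T])}\lesssim\|\eta\|_{X^s([0,T])}\|f\|_{X^s([0,T])}$ with $M$ independent of $T$ and of $\f{\sigma}{\mu^-}$ — the $T$-uniformity of the gravity contribution being precisely where the first-order ($|D|$) part of the dissipation in $\mathcal L$ is needed.

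\textbf{Expected main obstacle.} The bilinear estimate is the hard part: one must verify that \emph{every} term produced by the Dirichlet--Neumann operator and by the nonlinear elastic operator $\mathbf E(\eta)$ can be split with zero derivatives on one factor and at most five on the other (so that the parabolic gain $\wt L^1 H^{s+5}$ exactly compensates the loss), that these paraproduct and remainder estimates still close for $s$ arbitrarily close to $\f32$, where the curvature is only a negative-order distribution, and that the implied constant does not degenerate as $T\to\infty$, which forces simultaneous use of the fifth-order and first-order dissipation. Everything else — the linear estimate and the passage to the fixed point — is routine once these are in place.
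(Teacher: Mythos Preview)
Your overall strategy --- invoke Lemma~\ref{t:fixedpoint} after a linear bound and a bilinear bound on $\mathcal B$ --- matches the paper, and your treatment of the linear piece and of $|D|(T_{\ell(\eta)}f-|D|^4f)$ via \eqref{ellEst} is fine. The gap is in the Dirichlet--Neumann remainder. You propose to control $\|R^-(\eta)\mathbf{E}(\eta,f)\|_{H^s}$ and $\|R^-(\eta)f\|_{H^s}$ using \eqref{GfDRf} and \eqref{GetaDiff}, but neither estimate can produce output in $H^s$: the parameter $\mathfrak s$ there is capped at $s$ (or $s-\delta$) and the output lives in $H^{\mathfrak s-1}$ (or $H^{\mathfrak s-1+\delta}$ with $\delta\le\tfrac12$), hence at best in $H^{s-1}$. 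Invoking those lemmas at a higher ``$s$'' by exploiting $\eta(t)\in H^{s+5}$ does not help either: the implicit constant then depends nonlinearly on $\|\eta(t)\|_{H^{s+5}}$, and the resulting product $\|\eta\|_{H^{s+5}}\|f\|_{H^{s+5}}$ cannot be integrated in time against the $X^s$ structure, which only pairs $L^\infty_tH^s$ with $L^1_tH^{s+5}$. The paper closes this by proving a new \emph{tame} estimate, Lemma~\ref{t:elliptic} (eq.~\eqref{GfDRf2}),
\[
\|R^-(\eta)\varphi\|_{H^r}\lesssim \|\eta\|_{W^{1+\epsilon,\infty}}\|\varphi\|_{H^{r+1}}+\|\varphi\|_{W^{1,\infty}}\|\eta\|_{H^{r+1}},
\]
valid for all $r>\tfrac12$ with no upper cap, under smallness of $\|\eta\|_{W^{1+\epsilon,\infty}}$; this is exactly what makes the $0$--$5$ derivative split work. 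It is obtained by a direct fixed-point analysis of the flattened elliptic problem and does not follow from the black-box results \eqref{GfDRf}--\eqref{GetaDiff} you cite.

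A smaller omission: $\mathcal B(\eta,f)$ is \emph{not} linear in $\eta$ (it involves $T_{\ell(\eta)}$, $R_{E(\eta)}$, $R^-(\eta)$), so the bound $\|\mathcal B(\eta,f)\|\lesssim\|\eta\|\|f\|$ alone does not yield the second hypothesis of Lemma~\ref{t:fixedpoint}, namely $\|\mathcal B(\eta_1,f)-\mathcal B(\eta_2,f)\|\lesssim\|\eta_1-\eta_2\|\|f\|$. The paper checks this separately (inequality~\eqref{check:cB2}) using \eqref{GetaDiff} together with difference bounds for $T_{\ell(\eta)}$ and $R_{E(\eta)}$.
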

\begin{proof}
Let $T>0$, we apply the fixed point Lemma \ref{t:fixedpoint}. 
In the proof the constant $C$ may change from line to line.
From \eqref{SpaceTimeOne}, we have
\begin{align*}
\| e^{- \f{\sigma}{\mu^-}t|D_x|^5 - \frac{\rho^- g}{\mu^-}t|D_x|} \eta_0\|_{X^s([0,T])}
\leq C\|\eta_0\|_{H^s}.
\end{align*}
To match the paralinearization of the elastic term in \eqref{ElasticPara}, we define 
\begin{equation*}
 \mathbf{E}(\eta, f) : = T_{\ell(\eta)} f + R_{E(\eta)} (f).
\end{equation*}
Using \eqref{SpaceTimeOne} with $(q_1,q_2)=(\infty,1)$ and $(q_1,q_2)=(1,1)$, we have
\begin{align*}
\|\mathcal{B}(\eta,f)\|_{X^s([0,T])}
\leq& C\f{\sigma}{\mu^-}\|R^-(\eta)\mathbf{E}(\eta,f) \|_{\tilde{L}^1([0,T];H^s)}+C\f{\rho^- g}{\mu^-} \|R^-(\eta) f \|_{\tilde{L}^1([0,T];H^s)}\\
&+C\f{\sigma}{\mu^-}\||D|(T_{\ell(\eta)}f+R_{E(\eta)}f - |D|^4f)  \|_{\tilde{L}^1([0,T];H^s)}.
\end{align*}
We now estimate each term on the right-hand side of the inequality.
It follows from Lemma \ref{t:elliptic} that
\begin{align*}
\|R^-(\eta) f \|_{H^s}\leq C\|\eta\|_{W^{1+\e,\infty}}\|f\|_{H^{s+1}}+C\|\eta\|_{H^{s+1}}\|f\|_{W^{1,\infty}}.
\end{align*}
We also get that
\begin{align*}
&\|R^-(\eta)\mathbf{E}(\eta,f) \|_{H^s}
\leq C\|\eta\|_{W^{1+\e,\infty}}\|\mathbf{E}(\eta,f)\|_{H^{s+1}}+C\|\eta\|_{H^{s+1}}\|\mathbf{E}(\eta,f)\|_{H^s}\\
&\leq C\|\eta\|_{H^s}\|f\|_{H^{s+5}}+C\|f\|_{H^s}\|\eta\|_{H^{s+5}}
+C\|\eta\|_{H^{s+1}}\|f\|_{H^{s+4}}\\
&\leq C\|\eta\|_{H^s}\|f\|_{H^{s+5}}+C\|f\|_{H^s}\|\eta\|_{H^{s+5}}.
\end{align*}
Due to \eqref{ellEst}, we have 
\begin{align*}
\||D|(T_{\ell(\eta)}f+R_{E(\eta)}f - |D|^4f)\|_{H^s}\leq C\|\eta\|_{H^s}\|f\|_{H^{s+5}}+C\|f\|_{H^s}\|\eta\|_{H^{s+5}}.
\end{align*}
We then obtain that 
\begin{equation} \label{check:cB1}
\begin{aligned}
\|\mathcal{B}(\eta,f)\|_{X^s([0,T])}\leq&C\|\eta\|_{L^{\infty}([0,T];H^s)}\|f\|_{\tilde{L}^{1}([0,T];H^{s+5})}+C\|f\|_{L^{\infty}([0,T];H^s)}\|\eta\|_{\tilde{L}^{1}([0,T];H^{s+5})}\\
\leq&C\|\eta\|_{X^s([0,T])}\|f\|_{X^s([0,T])}.
\end{aligned}  
\end{equation}
From the contraction estimate \eqref{GetaDiff},
we can obtain
\begin{align*}
&\|[R^-(\eta_1)-R^-(\eta_2)]\mathbf{E}(\eta_2,f) \|_{H^s}\\
\leq& C\|\delta\eta\|_{H^s}[(\|\eta_1\|_{H^s}+\|\eta_2\|_{H^s})\|f\|_{H^{s+5}}+(\|\eta_1\|_{H^{s+5}}+\|\eta_2\|_{H^{s+5}})\|f\|_{H^{s}}],
\end{align*}
and by Lemma \ref{t:elliptic},
\begin{align*}
&\|R^-(\eta_1)(\mathbf{E}(\eta_1,f) -\mathbf{E}(\eta_2,f))\|_{H^s}\\
\leq& C(\|\eta_1\|_{H^s}+\|\eta_2\|_{H^s})\|\delta \eta\|_{H^{s+5}}\|f\|_{H^s}+C\|\delta\eta\|_{H^{s+5}}\|f\|_{H^{s}}.
\end{align*}
Similarly, we have
\begin{align*}
&\||D|(T_{\ell(\eta_1)}f+R_{E(\eta_1)}f - |D|^4f)-|D|(T_{\ell(\eta_2)}f+R_{E(\eta_2)}f - |D|^4f)\|_{H^s} \\
\leq& C\|\delta\eta\|_{H^s}[(\|\eta_1\|_{H^s}+\|\eta_2\|_{H^s})\|f\|_{H^{s+5}}+(\|\eta_1\|_{H^{s+5}}+\|\eta_2\|_{H^{s+5}})\|f\|_{H^{s}}].
\end{align*}
Thus, we get the difference estimate
\begin{equation} \label{check:cB2}
\begin{aligned}
\|\mathcal{B}(\eta_1,f)-\mathcal{B}(\eta_2,f)\|_{X^s([0,T])}\leq&(\|(\eta_1,\eta_2)\|_{X^s\times X^s([0,T])}+1)\|\delta \eta\|_{X^s([0,T])}\|f\|_{X^s([0,T])}.
\end{aligned}
\end{equation}
We then obtain this lemma by using the fixed-point Lemma \ref{t:fixedpoint}.
\end{proof}
Let $\eta_0^j\in H^s$, $j=1, 2$ be initial data with norm less than $\delta$ given in Lemma \ref{t:OneMuskatFixed}, and we denote $\eta^j$, $j = 1,2$ be the corresponding solutions with initial data $\eta_0^j$.
Using \eqref{OneMuskatIntegral}, \eqref{check:cB1} and \eqref{check:cB2}, we compute
\begin{align*}
 &\|\eta_1 - \eta_2\|_{X^s([0,T])} \leq \big\|e^{- \f{\sigma}{\mu^-}t|D_x|^5 - \frac{\rho^- g}{\mu^-}t|D_x|} (\eta_0^1 - \eta_0^2) \big\|_{X^s([0,T])} +  \|\mathcal{B}(\eta^1, \eta^1) - \mathcal{B}(\eta^2, \eta^2) \|_{X^s([0,T])} \\
 &\leq C\|\eta_0^1 - \eta_0^2 \|_{H^s} + \|\mathcal{B}(\eta^1, \eta^1- \eta^2) \|_{X^s([0,T])} +  \|\mathcal{B}(\eta^1, \eta^2) - \mathcal{B}(\eta^2, \eta^2) \|_{X^s([0,T])} \\
 &\leq C\|\eta_0^1 - \eta_0^2 \|_{H^s}   +C\|\eta^1\|_{X^s([0, T])}\| \eta^1-\eta^2\|_{X^s([0, T])}+C\| \eta^1-\eta^2\|_{X^s([0, T])}\| \eta^2\|_{X^s([0, T])}.
\end{align*}
Choosing $\delta$ small enough, we obtain that for any $T>0$,
\begin{equation*}
 \|\eta_1 - \eta_2\|_{X^s([0,T])} \lesssim \|\eta_0^1 - \eta_0^2 \|_{H^s}.
\end{equation*}
This shows the Lipschitz dependence on initial data of the solution map.
As a consequence, we obtain the global well-posedness for one-phase Muskat problem in $H^s$ in the sense of Hadamard.

\subsection{Global well-posedness for two-phase Muskat problem} \label{s:GlobalTwoMuskat}
For two-phase Muskat problem, we restrict ourself to the physically stable case where the denser fluid is on the bottom $\rho^+\leq \rho^-$.
One can write the system \eqref{TwoMuskatTwo} as 
\begin{equation*}
 \frac{1}{\mu^-}G^-(\eta)f^- = \frac{1}{\mu^+}G^+(\eta)f^- -\frac{g(\rho^- - \rho^+)}{\mu^+}G^+(\eta)\eta - \f{\sigma}{\mu^+}G^+(\eta) \mathbf{E}(\eta).
\end{equation*}
This can be further written as
\begin{equation*}
|D_x|f^- = \f{1}{\mu^+ + \mu^-}(\mu^- R^+(\eta)-\mu^+ R^-(\eta))f^-  -\f{g(\rho^- - \rho^+)\mu^-}{\mu^+ + \mu^-}G^+ (\eta) \eta -\f{\sigma \mu^-}{\mu^+ + \mu^-}G^+ (\eta) \mathbf{E}(\eta).
\end{equation*}
Hence, $f^-$ is the fixed-point of 
\begin{equation*} 
\begin{aligned}
\mathcal{K}(\eta)\varphi = &\f{1}{\mu^+ + \mu^-}(\mu^- |D_x|^{-1}R^+(\eta)-\mu^+ |D_x|^{-1}R^-(\eta))\varphi \\
-&\f{g(\rho^- - \rho^+)\mu^-}{\mu^+ + \mu^-}|D_x|^{-1}G^+ (\eta) \eta -\f{\sigma \mu^-}{\mu^+ + \mu^-}|D_x|^{-1}G^+ (\eta) \mathbf{E}(\eta). 
\end{aligned}
\end{equation*}

Using the equation \eqref{TwoMuskatOne}, we obtain that $\eta$ solves the nonlinear fractional heat equation.
\begin{equation*}  
\begin{aligned}
 &\p_t \eta + \frac{\sigma}{\mu^+ + \mu^-}|D_x|^5 \eta + \frac{g(\rho^- - \rho^+)}{\mu^+ + \mu^-}|D_x|\eta = -\frac{1}{\mu^+ + \mu^-}( R^+(\eta)+ R^-(\eta))f^- \\
 +& \frac{g(\rho^- - \rho^+)}{\mu^+ + \mu^-}R^+(\eta)\eta + \f{\sigma}{\mu^++\mu^-}R^+(\eta) \mathbf{E}(\eta) - \f{\sigma}{\mu^++\mu^-}|D_x|( \mathbf{E}(\eta) - \p_x^4\eta).
\end{aligned}
\end{equation*}

\begin{proposition}\label{Prop:f-}
Let $r>\f12$ and let $\eta\in H^s\cap H^{r+5}$, there exists a small number $\delta>0$ such that if  $\| \eta\|_{H^s}<\delta$, then the mapping $\mathcal{K}(\eta)$ has a unique fixed point $f^-$ in $H^s\cap H^{r+1}$ and
\begin{equation} \label{fMinusBound}
\|f^{-}\|_{H^{r+1}}\leq Cg(\rho^--\rho^+)\|\eta\|_{H^{r+1}}
+C\sigma\|\eta\|_{H^{r+5}}.
\end{equation}
\end{proposition}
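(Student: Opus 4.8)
The plan is to exploit that $\mathcal{K}(\eta)$ is \emph{affine} in its argument, solve the resulting linear equation by a contraction mapping — first in $H^s$, then, by persistence of regularity, in $H^{r+1}$ — and read off \eqref{fMinusBound} along the way.

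\emph{Step 1: affine structure.} We write $\mathcal{K}(\eta)\varphi = \mathcal{L}(\eta)\varphi + \mathcal{N}(\eta)$, where
\begin{equation*}
\mathcal{L}(\eta)\varphi := \f{1}{\mu^+ + \mu^-}\bigl(\mu^-|D_x|^{-1}R^+(\eta) - \mu^+|D_x|^{-1}R^-(\eta)\bigr)\varphi
\end{equation*}
is linear in $\varphi$ and
\begin{equation*}
\mathcal{N}(\eta) := -\f{g(\rho^- - \rho^+)\mu^-}{\mu^+ + \mu^-}|D_x|^{-1}G^+(\eta)\eta - \f{\sigma\mu^-}{\mu^+ + \mu^-}|D_x|^{-1}G^+(\eta)\mathbf{E}(\eta)
\end{equation*}
does not involve $\varphi$; throughout, $|D_x|^{-1}$ is read via the Littlewood--Paley decomposition, consistently with the function spaces, so that it gains exactly one derivative on each dyadic block. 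A fixed point $f^-$ of $\mathcal{K}(\eta)$ is then precisely a solution of $(Id-\mathcal{L}(\eta))f^- = \mathcal{N}(\eta)$, so it suffices to show that $\mathcal{L}(\eta)$ is a strict contraction on the relevant space and to estimate $\mathcal{N}(\eta)$.

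\emph{Step 2: contraction in $H^s$.} By Lemma \ref{t:elliptic}, for any $t$ one has $\|R^\pm(\eta)\varphi\|_{H^{t}} \le C\|\eta\|_{W^{1+\e,\infty}}\|\varphi\|_{H^{t+1}} + C\|\eta\|_{H^{t+1}}\|\varphi\|_{W^{1,\infty}}$, hence, gaining one derivative from $|D_x|^{-1}$,
\begin{equation*}
\|\mathcal{L}(\eta)\varphi\|_{H^{t+1}} \le C\|\eta\|_{W^{1+\e,\infty}}\|\varphi\|_{H^{t+1}} + C\|\eta\|_{H^{t+1}}\|\varphi\|_{W^{1,\infty}}.
\end{equation*}
Taking $t+1=s$ and fixing $\e\in(0,s-\tfrac32)$, the embeddings $H^s\hookrightarrow W^{1+\e,\infty}$ and $H^s\hookrightarrow W^{1,\infty}$ (valid since $s>\tfrac32$) give $\|\mathcal{L}(\eta)\|_{H^s\to H^s}\le C_0\|\eta\|_{H^s}$. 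Choosing $\delta$ so that $C_0\delta\le\tfrac12$, whenever $\|\eta\|_{H^s}<\delta$ the operator $\mathcal{L}(\eta)$ is a $\tfrac12$-contraction of $H^s$, so $Id-\mathcal{L}(\eta)$ is invertible there and the Banach fixed-point theorem yields a unique $f^-\in H^s$ with $f^-=\mathcal{K}(\eta)f^-$ and $\|f^-\|_{H^s}\le 2\|\mathcal{N}(\eta)\|_{H^s}$; uniqueness in $H^s\cap H^{r+1}\subset H^s$ is automatic.

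\emph{Step 3: regularity and the bound \eqref{fMinusBound}.} Writing $G^+(\eta)=-|D|+R^+(\eta)$, so that $|D_x|^{-1}G^+(\eta)v=-v+|D_x|^{-1}R^+(\eta)v$, and combining this with Lemma \ref{t:elliptic} at regularity $r+1$, the paralinearization $\mathbf{E}(\eta)=T_\ell\eta+R_E(\eta)$, and the elastic estimates \eqref{ellEst} and \eqref{REEtaEst} (in their form at regularity $r$), one obtains
\begin{equation*}
\|\mathcal{N}(\eta)\|_{H^{r+1}} \lesssim_{\|\eta\|_{H^s}} g(\rho^- - \rho^+)\|\eta\|_{H^{r+1}} + \sigma\|\eta\|_{H^{r+5}},
\end{equation*}
which is finite since $\eta\in H^{r+5}$; in particular $\mathcal{N}(\eta)\in H^{r+1}$. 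Because $\mathcal{L}(\eta)$ also maps $H^{r+1}$ to itself, the Picard iterates $f_0=0$, $f_{n+1}=\mathcal{K}(\eta)f_n$ stay bounded in $H^{r+1}$ and converge there, whence $f^-\in H^{r+1}$. Finally, from $f^-=\mathcal{L}(\eta)f^-+\mathcal{N}(\eta)$ and $\|\mathcal{L}(\eta)f^-\|_{H^{r+1}}\le C_0\|\eta\|_{H^s}\|f^-\|_{H^{r+1}}+C\|\eta\|_{H^{r+1}}\|f^-\|_{W^{1,\infty}}$, absorbing the first term (using $C_0\|\eta\|_{H^s}\le\tfrac12$) and bounding $\|f^-\|_{W^{1,\infty}}\lesssim\|f^-\|_{H^s}\le 2\|\mathcal{N}(\eta)\|_{H^s}$ together with the smallness $\|\eta\|_{H^s}<\delta$, we arrive at \eqref{fMinusBound}.

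\emph{Main obstacle.} The essential difficulty is the low regularity $s>\tfrac32$: at that level the symbol $\ell$ of the elastic operator is only $C^\e_*$ in $x$, so $\mathbf{E}(\eta)$ and the Dirichlet--Neumann remainders $R^\pm(\eta)$ cannot be controlled by crude product rules and must be treated throughout via paraproduct and balanced paradifferential estimates; this is exactly what supplies both the small prefactor $\|\eta\|_{W^{1+\e,\infty}}$ that makes $\mathcal{L}(\eta)$ a contraction and the tame, linear-in-high-norm structure of $\mathcal{N}(\eta)$. A related subtlety is that the elliptic estimate for $R^\pm(\eta)$ carries a term with the non-small factor $\|\eta\|_{H^{r+1}}$, which prevents closing the contraction directly in $H^{r+1}$; hence one must first solve in $H^s$, where the analogous factor $\|\eta\|_{H^s}$ is small, and only afterwards propagate the regularity.
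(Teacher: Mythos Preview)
Your proof is correct and follows essentially the same strategy as the paper: split $\mathcal{K}(\eta)$ into its affine part $u_0=\mathcal{N}(\eta)$ and linear part $\mathcal{K}_1=\mathcal{L}(\eta)$, use the refined remainder estimate \eqref{GfDRf2} of Lemma~\ref{t:elliptic} to make $\mathcal{L}(\eta)$ a contraction on $H^s$, and then propagate regularity to $H^{r+1}$ via the two-norm structure $\|\mathcal{L}(\eta)\varphi\|_{H^{r+1}}\le \alpha_2\|\varphi\|_{H^{r+1}}+A\|\varphi\|_{H^s}$. The only difference is packaging: the paper invokes the abstract two-space fixed-point Lemma~\ref{t:fp} with $E_1=H^s$, $E_2=H^{r+1}$, whereas you carry out the Banach iteration and the $H^{r+1}$-boundedness/convergence of the Picard iterates by hand; your ``Main obstacle'' paragraph correctly identifies why one cannot close the contraction directly in $H^{r+1}$, which is precisely the reason the paper resorts to Lemma~\ref{t:fp}.
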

\begin{proof}
We apply Lemma \ref{t:fp} with $E_1=H^s$, $E_2=H^{r+1}$ and 
\begin{align*}
u_0=-\f{g(\rho^- - \rho^+)\mu^-}{\mu^+ + \mu^-}|D_x|^{-1}G^+ (\eta) \eta -\f{\sigma \mu^-}{\mu^+ + \mu^-}|D_x|^{-1}G^+ (\eta) \mathbf{E}(\eta).
\end{align*}
From \eqref{GfDRf2}, we have
\begin{align*}
\|u_0\|_{H^{r+1}}\leq&g(\rho^--\rho^+)\||D_x|\eta\|_{H^{r}}
+\sigma\||D_x|\mathbf{E}(\eta)\|_{H^{r}}\\
\leq&Cg(\rho^--\rho^+)\|\eta\|_{H^{r+1}}
+C\sigma\|\eta\|_{H^{r+5}}.
\end{align*}
It remains to prove that the mapping
\begin{align*}
\mathcal{K}_1(\eta)\varphi := \f{1}{\mu^+ + \mu^-}(\mu^- |D_x|^{-1}R^+(\eta)-\mu^+ |D_x|^{-1}R^-(\eta))\varphi
\end{align*}
satisfies the condition in Lemma \ref{t:fp}. 
We apply the estimate \eqref{GfDRf2},
\begin{align*}
\|\mathcal{K}_1\varphi\|_{H^{s}}\leq C\|\varphi\|_{H^{s}},\quad
\|\mathcal{K}_1\varphi\|_{H^{r+1}}\leq C\|\eta\|_{H^s}\|\varphi\|_{H^{r+1}}+C\|\eta\|_{H^{r+1}}\|\varphi\|_{H^s}.
\end{align*}
Hence, there exists a unique fixed point $f^-\in H^s\cap H^{r+1}$ for $\mathcal{K}(\eta)$, and
\begin{align*}
\|f^-\|_{H^{r+1}}\leq Cg(\rho^--\rho^+)\|\eta\|_{H^{r+1}}
+C\sigma\|\eta\|_{H^{r+5}}+C\|\eta\|_{H^s}\|f^-\|_{H^{r+1}}+C\|\eta\|_{H^{r+1}}\|f^-\|_{H^s}.
\end{align*}
Taking $r=s-\f12$, then
\begin{align*}
\|f^-\|_{H^s}\leq Cg(\rho^--\rho^+)\|\eta\|_{H^s}
+C\sigma\|\eta\|_{H^{s+4}}.
\end{align*}
Thus, we obtain the estimate \eqref{fMinusBound}.
\end{proof}

Assume that $\|(\eta_1,\eta_2)\|_{H^s\times H^s}\leq\delta$, and $f^-_j$ is the fixed point of $\mathcal{K}(\eta_j)$. Then
\begin{align*}
\||D_x|\delta f^-\|_{H^{r}}&\lesssim
\|R^+(\eta_1)f^-_1-R^+(\eta_2)f^-_2\|_{H^r}+\|R^-(\eta_1)f^-_1-R^-(\eta_2)f^-_2\|_{H^r}\\
+&g(\rho^--\rho^+)\|G^+(\eta_1)\eta_1-G^+(\eta_2)\eta_2\|_{H^r}+\sigma\|G^+(\eta_1)\mathbf{E}(\eta_1)-G^+(\eta_2)\mathbf{E}(\eta_2)\|_{H^r}.
\end{align*}
Using \eqref{GetaDiff} and \eqref{GfDRf2}, we then obtain
\begin{equation}\label{Global-two-deltaf}
\begin{aligned}
\||D_x|\delta f^-\|_{H^{r}}\leq C(\sigma+g(\rho^--\rho^+))\left(\|(\eta_1, \eta_2)\|_{H^{r+5}\times H^{r+5}}\|\delta\eta\|_{H^s}+\|\delta\eta\|_{H^{r+5}}\right).
\end{aligned}   
\end{equation}

From Proposition \ref{Prop:f-} and \eqref{Global-two-deltaf}, performing the similar analysis as in the proof of global well-posedness for one-phase Muskat problem, we obtain the global well-posedness for two-phase Muskat problem.

\appendix
\section{Paradifferential and product estimates} \label{s:Paradifferential}
Here, we recall the definition of function spaces and some of the paradifferential estimates that are used in  previous sections.
Some of these results can be found in for instance \cite{MR2418072, MR2768550, MR3260858}.

\begin{definition}
We recall the  Littlewood-Paley frequency decomposition, $ I = \sum_{k\in \mathbb{N}} P_k$, where for each $k\geq 1$, $P_k$ are smooth symbols  localized at  frequency $2^k$,  and $P_0$ selects the low frequency components $|\xi|\leq 1$.
\begin{enumerate}
\item Let $s\in \mathbb{R}$, and $p,q \in [1, \infty]$.
The non-homogeneous Besov space $B^s_{p,q}(\mathbb{R})$ is defined as the space of all tempered distributions $u$ such that
\begin{equation*}
\| u\|_{B^s_{p,q}} : = \left\|(2^{ks}\|P_k u \|_{L^p})_{k=0}^\infty \right\|_{l^q} < +\infty.
\end{equation*}
\item When $p = q = \infty$, Besov space $B^s_{\infty, \infty}$ coincides with the \textit{Zygmund space} $C^s_{*}$.
When $p = q =2$, the Besov space $B^s_{2,2}$ becomes the \textit{Sobolev space} $H^s$.
\item One has the Sobolev embedding, 
\begin{equation}
 H^{s+\frac{1}{2}}(\mathbb{R}) \hookrightarrow C^s_{*}(\mathbb{R}) \quad \forall s, \label{HsCsEmbed}
\end{equation}
the Sobolev space $H^{s+\frac{1}{2}}(\mathbb{R})$ can be embedding into the Zygmund space $C^s_{*}(\mathbb{R})$.
\item Let $k\in \mathbb{N}$, we let $W^{k,\infty}(\mathbb{R})$ the space of all functions such that $\partial_x^j u \in L^\infty(\mathbb{R})$, $0\leq j \leq k$. 
For $\rho = k+ \gamma$ with $k\in \mathbb{N}$ and $\gamma \in (0,1)$, we denote $W^{\rho, \infty}(\mathbb{R})$  the
space of all function $u\in W^{k,\infty}(\mathbb{R})$ such that the $\partial_x^k u$ is $\gamma$- H\"{o}lder continuous on $\mathbb{R}$. 
\item The Zygmund space $C^s_{*}(\mathbb{R})$ is just the H\"{o}lder space $W^{s, \infty}(\mathbb{R})$ when $s\in (0,\infty)\backslash \mathbb{N}$.
One has the embedding properties
\begin{align*}
  &C_{*}^s(\mathbb{R}) \hookrightarrow L^\infty(\mathbb{R}), \quad s>0; \qquad L^\infty(\mathbb{R}) \hookrightarrow C_{*}^s, \quad s<0;\\
  &C_{*}^{s_1}(\mathbb{R})\hookrightarrow C_{*}^{s_2}(\mathbb{R}), \quad H^{s_1}(\mathbb{R})\hookrightarrow H^{s_2}(\mathbb{R}), \qquad s_1>s_2.
\end{align*}
\end{enumerate}
\end{definition}

\begin{definition}
\begin{enumerate}
\item Let $\rho\in [0,\infty)$, $m\in \mathbb{R}$. 
$\Gamma^m_\rho(\mathbb{R})$ denotes the space of locally bounded functions $a(x, \xi)$ on $\mathbb{R}\times (\mathbb{R}\backslash \{0\})$, which are $C^\infty$ with respect to $\xi$ for $\xi \neq 0$ and such that for all $k \in \mathbb{N}$ and $\xi \neq 0$, the function $x\mapsto \partial_\xi^k a(x,\xi)$ belongs to $W^{\rho,\infty}(\mathbb{R})$ and there exists a constant $C_k$ with
\begin{equation*}
\forall |\xi|\geq \frac{1}{2}, \quad \|\partial_\xi^k a(\cdot,\xi) \|_{W^{\rho,\infty}} \leq C_k (1+ |\xi|)^{m-k}.
\end{equation*}
Let $a\in \Gamma^m_\rho$,  we define the semi-norm
\begin{equation*}
M^m_{\rho}(a) = \sup_{k \leq \frac{3}{2}+\rho} \sup_{|\xi|\geq \frac{1}{2}} \|(1+ |\xi|)^{k-m}\partial_\xi^k a(\cdot,\xi)  \|_{W^{\rho,\infty}}.
\end{equation*}
\item Given $a\in \Gamma^m_\rho(\mathbb{R})$, let $C^\infty$ functions $\chi(\theta, \eta)$ and $\psi(\eta)$ be such that for some $0<\epsilon_1 < \epsilon_2<1$,
\begin{align*}
    &\chi(\theta, \eta) = 1,  \text{ if } |\theta| \leq \epsilon_1(1+ |\eta|), \qquad \chi(\theta, \eta) = 0,  \text{ if } |\theta| \geq \epsilon_2(1+ |\eta|),\\
    &\psi(\eta) = 0, \text{ if } |\eta|\leq \frac{1}{5}, \qquad \psi(\eta) =1, \text{ if } |\eta|\geq \frac{1}{4}.
\end{align*}
We define the paradifferential operator $T_a$ by
\begin{align*}
 \widehat{T_a u}(\xi) = \frac{1}{2\pi}\int \chi(\xi -\eta, \eta) \hat{a}(\xi-\eta, \eta)\psi(\eta)\hat{u}(\eta) d\eta,
\end{align*}
where $\hat{a}(\theta, \xi)$ is the Fourier transform of a with respect to the  variable $x$.

\item Let $m\in \mathbb{R}$, an operator  is said to be of order $m$ if, for all $s\in \mathbb{R}$, it is bounded from $H^s$ to $H^{s-m}$.
\end{enumerate}
\end{definition}

We recall the basic symbolic calculus for paradifferential operators in the following result.
\begin{lemma}[Symbolic calculus, \cite{MR2418072}]
Let $m\in \mathbb{R}$ and $\rho\in [0, +\infty)$.
\begin{enumerate}
\item If $a\in \Gamma^m_0$, then the paradifferential operator $T_a$ is of order m. Moreover, for all $s\in \mathbb{R}$, there exists a positive constant $K$ such that
\begin{equation}
\|T_a\|_{H^s\rightarrow H^{s-m}} \leq K M^m_0(a). \label{TABound}
\end{equation}
\item If $a\in \Gamma^m_\rho$, and $b\in \Gamma^{m^{'}}_\rho$ with $\rho>0$, then the operator $T_a T_b -T_{a\sharp b}$ is of order $m+m^{'}-\rho$, where the composition
\begin{equation*}
    a \sharp b := \sum_{\alpha<\rho} \frac{(-i)^\alpha}{\alpha !} \partial^\alpha_\xi a(x,\xi) \partial^\alpha_x b(x,\xi).
\end{equation*}
Moreover,  for all $s\in \mathbb{R}$, there exists a positive constant $K$ such that
\begin{equation}
  \|T_a T_b-T_{a\sharp b} \|_{H^s \rightarrow H^{s-m-m^{'}+\rho}} \leq K\left(M^m_\rho(a)M^{m^{'}}_0(b) + M^m_0(a)M^{m^{'}}_\rho(b)\right). \label{CompositionPara}
\end{equation}
\item Let $a\in \Gamma^m_\rho$ with $\rho > 0$. 
Denote by $(T_a)^{*}$ the adjoint operator of $T_a$ and by $\bar{a}$ the complex conjugate of $a$.
Then $(T_a)^{*} -T_{a^{*}}$ is of order $m - \rho$, where 
\begin{equation*}
    a^{*} =  \sum_{\alpha<\rho} \frac{1}{i^\alpha \alpha !} \partial^\alpha_\xi \partial^\alpha_x \bar{a}.
\end{equation*}
Moreover,  for all $s\in \mathbb{R}$, there exists a positive constant $K$ such that
\begin{equation}
 \|(T_a)^{*} -T_{a^{*}} \|_{H^s \rightarrow H^{s-m+\rho}} \leq KM^m_\rho(a). \label{AdjointBound}
\end{equation}
In particular, if $a$ is a function that is independent of $\xi$, then  $(T_a)^* = T_{\bar{a}}$.
\end{enumerate}
\end{lemma}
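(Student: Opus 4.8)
The three assertions form the standard symbolic calculus for one-dimensional paradifferential operators, and the plan is to deduce all of them from estimates on a single object: the smoothed symbol associated with $a\in\Gamma^m_\rho$. Integrating out the $\xi$-variable in the definition of $T_a$, one first rewrites
\[
T_a u(x)=\frac{1}{2\pi}\int e^{ix\eta}\,\widetilde a(x,\eta)\,\psi(\eta)\,\widehat u(\eta)\,d\eta,\qquad \widetilde a(\cdot,\eta):=\chi(D_x,\eta)\,a(\cdot,\eta),
\]
where $\chi(D_x,\eta)$ denotes the Fourier multiplier with symbol $\theta\mapsto\chi(\theta,\eta)$; thus $\widetilde a(\cdot,\eta)$ is $a(\cdot,\eta)$ with its $x$-frequencies truncated to $\{|\theta|\leq\epsilon_2(1+|\eta|)\}$. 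The first step is to record the two structural consequences of this truncation. Since $\chi(\cdot,\eta)$ is, after rescaling, a fixed smooth bump on a ball of radius $\sim(1+|\eta|)$, the operator $\chi(D_x,\eta)$ is uniformly bounded on every $L^p$, and by Bernstein's inequality each extra $x$-derivative on $\widetilde a(\cdot,\eta)$ costs only a factor $(1+|\eta|)$; combined with the $W^{\rho,\infty}$ hypothesis this yields
\[
\|\partial_\xi^j\partial_x^k\widetilde a(\cdot,\eta)\|_{L^\infty}\lesssim (1+|\eta|)^{m-j+\max(k-\rho,\,0)}\,M^m_\rho(a).
\]
Moreover, if $\widehat u$ is supported where $|\eta|\sim 2^\ell$, then $\widetilde a(\cdot,\eta)$ has $x$-frequencies $\leq\epsilon_2 2^\ell$, so $T_a(\Delta_\ell u)$ is spectrally supported in the annulus $|\xi|\sim 2^\ell$; hence the dyadic pieces of $T_a u$ are almost orthogonal.

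For part (1) I would decompose $u=\sum_\ell\Delta_\ell u$, use the almost orthogonality of $\{T_a\Delta_\ell u\}_\ell$ to reduce to $\|T_a u\|_{H^{s-m}}^2\lesssim\sum_\ell 2^{2\ell(s-m)}\|T_a\Delta_\ell u\|_{L^2}^2$, and then bound $\|T_a\Delta_\ell u\|_{L^2}\lesssim 2^{\ell m}M^m_0(a)\|\Delta_\ell u\|_{L^2}$ by regarding $T_a$ on this block as a pseudodifferential operator of order $m$ whose symbol size at unit scale is controlled by $M^m_0(a)$ — here only the $\xi$-derivatives appearing in the semi-norm, and never any $x$-regularity of $a$, are used. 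Summing in $\ell$ gives \eqref{TABound}; the point worth stressing is precisely that the cutoff $\chi$ is doing all the work, which is why paradifferential operators with merely bounded symbols are still of order $m$.

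For parts (2) and (3) I would compute the symbol of the composition, respectively of the adjoint, via the usual oscillatory-integral identity. Writing $T_aT_b$ as a pseudodifferential operator with symbol
\[
c(x,\xi)=\frac{1}{2\pi}\iint e^{-iy\eta}\,\widetilde a(x,\xi+\eta)\,\psi(\xi+\eta)\,\widetilde b(x+y,\xi)\,\psi(\xi)\,dy\,d\eta,
\]
one Taylor-expands $\eta\mapsto\widetilde a(x,\xi+\eta)\psi(\xi+\eta)$ at $\eta=0$; integrating by parts in $y$ to turn each factor $\eta^\alpha$ into $\partial_x^\alpha$ acting on $\widetilde b$ and then collapsing the $\eta$-integral, the term of order $\alpha$ produces exactly $\frac{(-i)^\alpha}{\alpha!}\,\partial_\xi^\alpha a\,\partial_x^\alpha b$ (the $\psi$-factors are $1$ away from the origin and contribute only a fixed smoothing operator), which reassembles $a\sharp b$, while the Taylor remainder is a symbol to which part (1) applies with order lowered by $\rho$. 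The adjoint formula \eqref{AdjointBound} is the special case in which $T_b$ is replaced by the conjugation map and is handled in the same way.

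The main obstacle will be making the Taylor-remainder estimate rigorous: after integrating by parts, the remainder symbol involves $\widetilde a$ evaluated at a shifted co-frequency $\xi+t\eta$ rather than at $\xi$, so the $\eta$-integral no longer collapses cleanly, and when $\rho$ is not an integer one cannot even expand to order exactly $\rho$. I would handle both issues at once by reintroducing a Littlewood–Paley decomposition $a=\sum_k\Delta_k^x a$, $b=\sum_k\Delta_k^x b$ in the $x$-variable: the support conditions forced by $\chi$ restrict the interacting indices to $2^k\lesssim 2^\ell$ (which pins $\xi+t\eta\sim\xi$ on each piece), one Taylor-expands to the integer order $\lfloor\rho\rfloor+1$ on each dyadic block, and one sums the resulting geometric series in $k$ using $\|\Delta_k^x a(\cdot,\eta)\|_{L^\infty}\lesssim 2^{-k\rho}(1+|\eta|)^m M^m_\rho(a)$ from $C^\rho_*=B^\rho_{\infty,\infty}$. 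This is exactly where the definition of $M^m_\rho$ through the $W^{\rho,\infty}$ norm is used, and it is the only genuinely technical step; the remaining items — making the almost orthogonality in part (1) precise and tracking the low-frequency contributions of the $\psi$-cutoffs — add at most a fixed smoothing operator and do not change any of the stated orders.
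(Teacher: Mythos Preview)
The paper does not give its own proof of this lemma: it is stated in Appendix~\ref{s:Paradifferential} as a quoted result from the literature, with the citation \cite{MR2418072} serving in lieu of proof. There is therefore no paper-proof to compare your proposal against.

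That said, your sketch is a faithful outline of the standard argument (as found, e.g., in M\'etivier's \emph{Para-differential Calculus} or in Bahouri--Chemin--Danchin): regularize the symbol in $x$ via $\chi(D_x,\eta)$, exploit the spectral localization this forces to reduce to dyadic-block estimates, and handle composition/adjoint by Taylor expansion in the co-frequency combined with a Littlewood--Paley decomposition of the symbol in $x$ to treat non-integer $\rho$. One small caveat: your final assertion that ``if $a$ is a function independent of $\xi$, then $(T_a)^*=T_{\bar a}$'' is stated in the lemma but is not an exact equality in general --- the self-adjointness fails by a smoothing operator unless the cutoff $\chi$ is even in its first variable (which the paper does not impose). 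This is harmless for the applications in the paper but is worth flagging if you intend to supply a full proof.
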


When $a$ is just a function, $T_a u$ becomes the low-high paraproduct.
Below, we record some estimates for products and paraproducts.

\begin{lemma}[\hspace{1sp}\cite{MR2768550}]
\begin{enumerate}
\item Let $s_0, s_1, s_2$ be such that $s_0 \leq s_2$ and $s_0< s_1 +s_2 -\frac{1}{2}$, then
\begin{equation}
    \|T_a u\|_{H^{s_0}(\mathbb{R})} \lesssim \| a\|_{H^{s_1}(\mathbb{R})} \|u \|_{H^{s_2}(\mathbb{R})}.
\end{equation}
If in addition to the conditions above, $s_1 +s_2 >0$, then
\begin{equation} \label{BalancedError}
\|au -T_a u \|_{H^{s_0}(\mathbb{R})} \lesssim \| u\|_{H^{s_1}(\mathbb{R})} \|a \|_{H^{s_2}(\mathbb{R})}.
\end{equation}
\item For $s>0$, then for $u, v\in H^s\cap L^\infty$, $uv\in H^s$, and 
\begin{equation} \label{HsLinftyEst}
 \|uv\|_{H^s} \lesssim \|u\|_{L^\infty} \|v\|_{H^s} + \|v\|_{L^\infty} \|u\|_{H^s}.
\end{equation}
\end{enumerate}
\end{lemma}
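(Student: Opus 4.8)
The plan is to derive both parts from the Littlewood--Paley calculus, decomposing each product by Bony's formula
\begin{equation*}
au = T_a u + T_u a + R(a,u), \qquad R(a,u) := \sum_{|j-k|\le 1} P_j a\, P_k u,
\end{equation*}
and estimating the low--high paraproduct $T_a u$, the high--low paraproduct $T_u a$, and the resonant remainder $R(a,u)$ separately. Throughout I would write $P_{\le k} := \sum_{l\le k} P_l$ and use Bernstein's inequalities $\|P_l f\|_{L^\infty}\lesssim 2^{l/2}\|P_l f\|_{L^2}$ and $\|P_l(fg)\|_{L^2}\lesssim 2^{l/2}\|f\|_{L^2}\|g\|_{L^2}$, reducing everything to weighted estimates on $\ell^2$ sequences handled by Schur's test and Young's convolution inequality.

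For part (1), I would first bound the paraproduct: since $P_j(T_a u)$ only receives contributions from $(P_{\le k-N}a)\,P_k u$ with $|k-j|\lesssim 1$, one has $\|P_j(T_a u)\|_{L^2}\lesssim \sum_{|k-j|\lesssim 1}\|P_{\le k-N}a\|_{L^\infty}\,\|P_k u\|_{L^2}$. The delicate point is that $\|P_{\le k-N}a\|_{L^\infty}\le\sum_{l\le k}\|P_l a\|_{L^\infty}\lesssim\sum_{l\le k}2^{(\f12-s_1)l}\,\big(2^{s_1 l}\|P_l a\|_{L^2}\big)$ is controlled by $\|a\|_{H^{s_1}}$ only when $s_1>\f12$; for $s_1\le\f12$ it carries a frequency factor $2^{(\f12-s_1)_+ k}\|a\|_{H^{s_1}}$, with a logarithm at the endpoint $s_1=\f12$. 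After multiplying by $2^{s_0 j}$ and taking the $\ell^2_j$ norm, the residual power $2^{(s_0+(\f12-s_1)_+-s_2)j}$ is bounded and summable against $\ell^2$ sequences exactly because $s_0\le s_2$ (for $s_1\ge\f12$) and $s_0<s_1+s_2-\f12$ (for $s_1<\f12$, which also absorbs the logarithm at $s_1=\f12$); this yields the first inequality. For the remainder, $au-T_a u = T_u a + R(a,u)$: the term $T_u a$ is handled by the paraproduct estimate just proved, applied with $a$ and $u$ interchanged and with $u\in H^{s_1}$, $a\in H^{s_2}$, since the hypotheses on $s_0$ are exactly what this requires. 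For $R(a,u)$ the structural point is that $P_l R(a,u)$ vanishes unless $2^l\lesssim 2^j$, whence $\|P_l R(a,u)\|_{L^2}\lesssim 2^{l/2}\sum_{j\ge l-O(1)}2^{-(s_1+s_2)j}\,c_j d_j$ with $c_j:=2^{s_2 j}\|P_j a\|_{L^2}$ and $d_j:=2^{s_1 j}\|P_j u\|_{L^2}$ in $\ell^2$, so $(c_j d_j)\in\ell^1$; the sum over $j\ge l$ is convolution with a kernel supported on $\{m\ge 0\}$ with entries $2^{-(s_1+s_2)m}$, which lies in $\ell^1$ precisely when $s_1+s_2>0$, and after the prefactor $2^{(s_0+\f12-s_1-s_2)l}$ (bounded since $s_0<s_1+s_2-\f12$) the $\ell^2_l$ norm is $\lesssim\|u\|_{H^{s_1}}\|a\|_{H^{s_2}}$. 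Combining the two pieces gives \eqref{BalancedError}.

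For part (2) I would run the same decomposition $uv=T_u v+T_v u+R(u,v)$ at regularity $s$, but now using only the trivial bound $\|P_{\le k}w\|_{L^\infty}\lesssim\|w\|_{L^\infty}$. This gives $\|P_j(T_u v)\|_{L^2}\lesssim\|u\|_{L^\infty}\sum_{|k-j|\lesssim1}\|P_k v\|_{L^2}$, hence $\|T_u v\|_{H^s}\lesssim\|u\|_{L^\infty}\|v\|_{H^s}$ for every $s\in\R$, and symmetrically $\|T_v u\|_{H^s}\lesssim\|v\|_{L^\infty}\|u\|_{H^s}$. For the resonant term, $\|P_l R(u,v)\|_{L^2}\lesssim\|u\|_{L^\infty}\sum_{j\ge l-O(1)}\|P_j v\|_{L^2}$, and the hypothesis $s>0$ makes the kernel $m\mapsto2^{-sm}$ (supported on $m\ge 0$) an $\ell^1$ kernel, so Young's inequality yields $\|R(u,v)\|_{H^s}\lesssim\|u\|_{L^\infty}\|v\|_{H^s}$; adding the three bounds gives \eqref{HsLinftyEst}.

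I expect the main obstacle to be the bookkeeping in part (1) for $s_1\le\f12$: there $\|P_{\le k}a\|_{L^\infty}$ is not bounded by $\|a\|_{H^{s_1}}$, and one must propagate the frequency-dependent loss $2^{(\f12-s_1)_+ k}$ (with a logarithmic correction at the endpoint) through the summation and verify that the two conditions $s_0\le s_2$ and $s_0<s_1+s_2-\f12$ precisely compensate it. Everything else is routine Littlewood--Paley manipulation with Bernstein's inequalities and Schur's test.
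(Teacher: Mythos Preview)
The paper does not prove this lemma; it is stated in the appendix as a known result quoted from \cite{MR2768550}, so there is no ``paper's own proof'' to compare against. Your argument via Bony's decomposition---estimating $T_a u$, $T_u a$, and the resonant remainder $R(a,u)$ separately using Bernstein's inequalities and Young's convolution inequality on the dyadic $\ell^2$ sequences---is correct and is precisely the standard proof one finds in that reference.

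One small point worth making explicit in your write-up: at the endpoint $s_1=\tfrac12$ the bound $\|P_{\le k}a\|_{L^\infty}\lesssim k^{1/2}\|a\|_{H^{s_1}}$ carries a logarithmic loss, and it is the \emph{strict} inequality $s_0<s_1+s_2-\tfrac12$ (which at $s_1=\tfrac12$ reads $s_0<s_2$) that absorbs it; the condition $s_0\le s_2$ alone would not. You handle this correctly, but it is the one place where the interplay between the two hypotheses is genuinely delicate, so it deserves a sentence.
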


For nonlinear functions, we record below the Moser estimate, the difference estiamte and the paralinearization result.
\begin{lemma}[\hspace{1sp}\cite{MR2768550}]
\begin{enumerate}
\item (Moser) Let  a smooth function $F\in C^\infty(\mathbb{C}^N)$ satisfying $F(0) = 0$.
Then, there holds
\begin{equation} \label{MoserOne}
\|F(u) \|_{H^s} \lesssim_{\|u\|_{L^\infty}} \|u\|_{H^s},\quad s\geq 0.  
\end{equation}
\item  Let a smooth function $F\in C^\infty(\mathbb{C}^N)$ satisfying $\nabla F(0) = 0$.
For any $U,V \in H^s(\R^d)^N \cap L^\infty(\R^d)^N$, 
\begin{equation} \label{FUVDiffEst}
\begin{aligned}
& \|F(U)-F(V)\|_{H^s} \\
 \lesssim&_{\|U\|_{L^\infty}, \|V\|_{L^\infty}} \Big(  \|U-V\|_{H^s} + \|U-V\|_{L^\infty} \sup_{\tau\in [0,1]} \|V+\tau(U-V) \|_{H^s}\Big).
 \end{aligned}
\end{equation}
\item (Paralinearization) Let $s, \rho>0$, and $F(u)$ be a smooth function of $u$, then for any $u\in H^s(\mathbb{R}^d)\cap C^\rho_{*}(\mathbb{R}^d)$,
\begin{equation} \label{Paralinearization}
    \|F(u)-F(0)-T_{F^{'}(u)}u\|_{H^{s+\rho}(\mathbb{R}^d)} \leq C(\|u\|_{L^\infty(\mathbb{R}^d)})\|u\|_{C^\rho_{*}(\mathbb{R}^d)}\|u\|_{H^s(\mathbb{R}^d)}.
\end{equation}
\end{enumerate}
\end{lemma}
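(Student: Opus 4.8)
All three estimates are classical consequences of the Littlewood--Paley and Bony decompositions, and the plan is to deduce the Moser bound \eqref{MoserOne} and the difference bound \eqref{FUVDiffEst} from the paralinearization bound \eqref{Paralinearization} together with the paraproduct estimate \eqref{HsLinftyEst}, while proving \eqref{Paralinearization} directly by a telescoping argument.

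For \eqref{Paralinearization}: writing $S_k := \sum_{j<k} P_j$ and telescoping, $F(u)-F(0)=\sum_{k\ge 0}\big(F(S_{k+1}u)-F(S_k u)\big)$, and the fundamental theorem of calculus gives $F(S_{k+1}u)-F(S_k u)=m_k\,P_k u$ with $m_k:=\int_0^1 F'\big(S_k u+tP_k u\big)\,dt$, so that $F(u)-F(0)=\sum_k m_k P_k u$. I would then compare this term by term with the paraproduct $T_{F'(u)}u=\sum_k \big(S_{k-N}F'(u)\big)P_k u$ (modulo spectrally harmless remainders), the main contribution being $\sum_k\big(m_k-F'(u)\big)P_k u$. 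Using $\|P_k u\|_{L^\infty}\lesssim 2^{-k\rho}\|u\|_{C^\rho_*}$ together with the Lipschitz bound $\|F'(a)-F'(b)\|_{L^\infty}\le C(\|u\|_{L^\infty})\|a-b\|_{L^\infty}$ on balls, one gets $\|m_k-F'(u)\|_{L^\infty}\lesssim_{\|u\|_{L^\infty}}2^{-k\rho}\|u\|_{C^\rho_*}$, hence $\|(m_k-F'(u))P_k u\|_{L^2}\lesssim_{\|u\|_{L^\infty}}2^{-k\rho}\|u\|_{C^\rho_*}\|P_k u\|_{L^2}$. Multiplying by the weight $2^{(s+\rho)k}$ cancels the $2^{k\rho}$ loss, and square-summing in $k$ reconstitutes $\|u\|_{H^s}$; the high-high/resonant pieces discarded above are even more regular and are summed the same way.

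For \eqref{MoserOne}: write $F(u)=G(u)u$ with $G(u):=\int_0^1 F'(tu)\,dt$ smooth and $G(u)\in L^\infty$ whenever $u\in L^\infty$. The case $s=0$ is immediate, the case $s\in(0,1)$ follows from the Gagliardo difference-quotient characterization of $H^s$ together with $|F(u(x+y))-F(u(x))|\le C(\|u\|_{L^\infty})\,|u(x+y)-u(x)|$, and general $s$ follows by induction on $\lfloor s\rfloor$ using $\partial_x(F(u))=F'(u)\partial_x u$ and \eqref{HsLinftyEst} (equivalently one re-runs the telescoping argument of the previous paragraph). For \eqref{FUVDiffEst}: write $F(U)-F(V)=H(U,V)(U-V)$ with $H(U,V):=\int_0^1 F'\big(V+\tau(U-V)\big)\,d\tau$; since $\nabla F(0)=0$ one has $H(0,0)=0$, so applying \eqref{MoserOne} to the smooth map $F'-F'(0)$ (which vanishes at the origin) gives $\|H(U,V)\|_{H^s}\lesssim_{\|U\|_{L^\infty},\|V\|_{L^\infty}}\sup_{\tau\in[0,1]}\|V+\tau(U-V)\|_{H^s}$, while trivially $\|H(U,V)\|_{L^\infty}\lesssim_{\|U\|_{L^\infty},\|V\|_{L^\infty}}1$; feeding both into \eqref{HsLinftyEst} for the product $H(U,V)\cdot(U-V)$ yields \eqref{FUVDiffEst}.

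The one genuinely delicate point is the spectral bookkeeping behind \eqref{Paralinearization}: the coefficients $m_k$ are only \emph{approximately} frequency localized, so the term-by-term comparison with $T_{F'(u)}u$ has to be carried out through the usual commutator/remainder estimates, and one must check that the $\rho$-Hölder gain is \emph{exactly} what absorbs the $2^{k\rho}$ weight of the target space $H^{s+\rho}$. Since \eqref{MoserOne}, \eqref{FUVDiffEst} and \eqref{Paralinearization} are standard, we refer to \cite{MR2768550} for the complete argument.
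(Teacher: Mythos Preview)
Your sketch is a correct outline of the standard Meyer--Bony arguments behind \eqref{MoserOne}, \eqref{FUVDiffEst} and \eqref{Paralinearization}. Note, however, that the paper does not supply a proof at all: the lemma is simply quoted from \cite{MR2768550} and used as a black box, so there is no in-paper argument to compare against. Your closing sentence, deferring to \cite{MR2768550} for the complete proof, is therefore exactly in line with the paper's treatment.
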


\section{Results on the Dirichlet-Neumann operator and parabolic estimates} \label{s:DNOperator}
In this section, we recall some results on the Dirichlet-Neumann operator.
We consider the following screened fractional Sobolev space defined in Leoni and Tice \cite{MR3989147}:
\begin{equation*}
 \widetilde{H}^\f{1}{2}_\Upsilon (\R) = \left\{ f\in \mathcal{S}'(\R) \cap L^2_{loc}(\R) : \int_\R \int_{B_{\R}(0, \Upsilon(x))} \dfrac{|f(x+y ) - f(x)|^2}{|y|^2} dydx < \infty  \right\} / \R, 
\end{equation*}
where $\Upsilon : \R \rightarrow (0, \infty]$ is a given lower semi-continuous function.
For the lower domain $\Omega^-$, we choose
\begin{equation*}
\Upsilon(x) = \begin{cases}
\infty, \quad \text{if }  \Gamma^- = \emptyset, \\
\mathfrak{d}_- (x): = \frac{\eta(x) - \underline{b}^-(x)}{2(\| \eta_x\|_{L^\infty} + \| \underline{b}^-_x\|_{L^\infty})}, \quad \text{if } \underline{b}^- \in \dot{W}^{1,\infty}(\R). 
\end{cases}
\end{equation*}
For the upper domain $\Omega^+$, $\underline{b}^-$ is replaced by $\underline{b}^+$. 
We define the space
\begin{equation*}
 \widetilde{H}^\f{1}{2}_\pm (\R) = \begin{cases}
\widetilde{H}^\f{1}{2}_\infty (\R), \quad \text{if }  \Gamma^- = \emptyset, \\
\widetilde{H}^\f{1}{2}_{\mathfrak{d}_\pm}(\R), \quad \text{if } \underline{b}^- \in \dot{W}^{1,\infty}(\R). 
\end{cases}
\end{equation*}

Let $\mathcal{S}'(\R)$ and and $\mathcal{P}(\R)$ denote the space of tempered distributions on $\R$ and the set of polynomials on $\R$.
We also define the slightly-homogeneous Sobolev spaces
\begin{equation*}
H^{1,\sigma}(\R) = \{f \in \mathcal{S}'(\R)\cap L^2_{loc}(\R): f_x \in H^{\sigma-1}(\R) \}/ \R .
\end{equation*}
For $s>\f12$, we set 
\begin{equation*}
   \widetilde{H}^s_\pm (\R) = \widetilde{H}^\f{1}{2}_\pm(\R)\cap H^{1,s}(\R). 
\end{equation*}

Using the Sobolev spaces defined above, we have the following result for the  Dirichlet-Neumann operator.

\begin{lemma} [\hspace{1sp}\cite{MR3260858, MR4090462}]
Let $s>\f32$, and $\f12 \leq \mathfrak{s} \leq s$.
Consider $f \in \widetilde{H}^\mathfrak{s}_- (\R)$ and $\eta \in H^s(\R)$ with dist$(\eta, \Gamma^-) \geq h>0$.
Then $G^-(\eta)f \in H^{\mathfrak{s}-1}(\R)$ and
\begin{equation} \label{GfEtaf}
 \|G^-(\eta)f \|_{H^{\mathfrak{s} -1}} \lesssim_{\|\eta\|_{H^s}}  \|f\|_{ \widetilde{H}^\mathfrak{s}_-},
\end{equation}
where the implicit constant in the inequality depends only on $s, \mathfrak{s}, h$ and $\| \underline{b}^-\|_{\dot{W}^{1,\infty}(\R)}$.
\end{lemma}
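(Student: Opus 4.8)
The plan is to reduce \eqref{GfEtaf} to an elliptic boundary value problem on a fixed domain and to exploit the paradifferential parabolic factorization of the elliptic operator, following the scheme of Alazard--Burq--Zuily in the bottomless case and Nguyen--Pausader in the finite-depth, screened-space setting. At the low regularity $s>\f32$ the only available pointwise control of the interface is $\eta\in C^{1+\epsilon}_*$ via \eqref{HsCsEmbed}, so the whole argument must be run within the paradifferential calculus rather than with classical elliptic estimates.

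First I would straighten $\Omega^-$. In the bottomless case one maps the lower half-space $\{z<0\}$ onto $\Omega^-$ by a Lipschitz diffeomorphism $(x,z)\mapsto(x,\varrho(x,z))$ with $\varrho(x,0)=\eta(x)$, where $\varrho$ is built from a Poisson-type regularization of $\eta$ so that $\varrho-(z+\eta)$ gains half a derivative; in the finite-depth case one maps the strip $\{-1<z<0\}$ onto $\Omega^-$, with $\varrho(x,0)=\eta(x)$, $\varrho(x,-1)=\underline b^-(x)$, chosen so that $\partial_z\varrho$ is bounded below by a positive constant depending only on $h$ and $\|\underline b^-\|_{\dot W^{1,\infty}}$. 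Pulling back the harmonic function $\phi^-$ turns it into a solution $v$ of a divergence-form elliptic equation $\nabla_{x,z}\cdot(\mathbf A(x,z)\nabla_{x,z}v)=0$ whose coefficients are smooth functions of $\nabla_{x,z}\varrho$, hence of class $C^\epsilon_*$ in $x$ and smooth in $z$; the Neumann condition on $\Gamma^-$ becomes a conormal condition at $z=-1$, and $G^-(\eta)f$ is a bounded combination of $(\partial_z v)|_{z=0}$ and $\partial_x f$.

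Next I would solve this boundary value problem variationally in the screened space $\widetilde H^{\f12}_-$ --- this is where the Leoni--Tice framework and the precise choice of screening function enter --- obtaining the base energy bound controlling $\nabla_{x,z}v$ in $L^2$ of the flattened domain, uniformly in $h$ and $\|\underline b^-\|_{\dot W^{1,\infty}}$; restricting this to $z=0$ already gives the case $\mathfrak s=\f12$ of \eqref{GfEtaf}. To reach general $\mathfrak s$, I would paralinearize the elliptic equation in the $z$ variable and factor its principal part as $(\partial_z-T_a)(\partial_z-T_A)$ modulo a smoothing operator, with $A(x,\xi)$ elliptic of order one, $\mathrm{Re}\,A\gtrsim|\xi|$, and $\mathrm{Re}\,a\lesssim-|\xi|$; solving the two resulting first-order paradifferential parabolic equations in $z$ in their well-posed directions (one from $z=0$ with data controlled by $f$, one from the bottom boundary) and combining the energy estimates for $\partial_z w\mp T_{|D|}w=g$ with the symbolic calculus \eqref{TABound}, \eqref{CompositionPara}, one propagates the bound to $\|\partial_z v\|_{L^\infty_z H^{\mathfrak s-1}_x}\lesssim_{\|\eta\|_{H^s}}\|f\|_{\widetilde H^{\mathfrak s}_-}$. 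Taking the trace at $z=0$ and adding $\|\partial_x f\|_{H^{\mathfrak s-1}}\le\|f\|_{\widetilde H^{\mathfrak s}_-}$ yields \eqref{GfEtaf}; a bootstrap in $\mathfrak s$ by steps of size $\epsilon$ sweeps the whole range $\f12\le\mathfrak s\le s$.

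The main obstacle is the roughness of the coefficients $\mathbf A$: they are only $C^\epsilon_*$ in $x$, so the paralinearization of $\nabla_{x,z}\cdot(\mathbf A\nabla_{x,z}v)$, the factorization above, and the ensuing parabolic estimates must all be carried out with this minimal smoothness while keeping the constants of the form $\mathcal F(\|\eta\|_{H^s})$ and uniform in $h$ and $\|\underline b^-\|_{\dot W^{1,\infty}}$. Keeping careful track of the remainders generated by products of rough coefficients, and showing that the bottom contributions are harmless (absorbed by the smoothing part and the base energy estimate), is the technical core; since this is exactly the content of \cite{MR3260858} in the bottomless case and of \cite{MR4090462} in the present finite-depth, screened setting, I would invoke those results rather than redo the construction.
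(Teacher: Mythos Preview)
Your proposal is correct and in fact goes beyond what the paper does: the paper provides no proof of this lemma at all, simply citing it as a known result from \cite{MR3260858, MR4090462}. Your sketch accurately summarizes the strategy used in those references --- straightening the domain, the variational base estimate in the screened space $\widetilde H^{\f12}_-$, the paradifferential factorization $(\p_z-T_a)(\p_z-T_A)$ of the flattened elliptic operator, and the parabolic-in-$z$ bootstrap --- and you correctly identify that the finite-depth, screened-space adaptation is the content of \cite{MR4090462}. Since you end by invoking those results rather than reproducing the full construction, your treatment is entirely consistent with the paper's.
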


For the two-dimensional problem, due to the simple geometry, we have the following result on the paralinearization of the Dirichlet-Neumann operator.
\begin{lemma}[\hspace{1sp}\cite{MR3260858, MR4090462}]
Let $s>\f32$, $\delta \in (0, \f12]$, and $\mathfrak{s} \in [\f12, s-\delta]$.
If $f \in \widetilde{H}^\mathfrak{s}_- (\R)$ and $\eta \in H^s(\R)$ with dist$(\eta, \Gamma^-)>h>0$, then 
\begin{equation} \label{GfDRf}
G^-(\eta)f = |D|f + R^-(\eta)f, \quad \| R^-(\eta)f\|_{H^{\mathfrak{s} - 1+\delta}} \lesssim_{\|\eta\|_{H^s}} \|f\|_{\widetilde{H}^\mathfrak{s}_-},
\end{equation}
where the implicit constant in the inequality depend only on $(s, \mathfrak{s}, \delta, h)$. 
\end{lemma}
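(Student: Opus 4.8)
The plan is to follow the paradifferential analysis of the Dirichlet--Neumann operator developed in Alazard--Burq--Zuily \cite{MR3260858} and Nguyen--Pausader \cite{MR4090462}, specialized to the two-dimensional geometry. First I would flatten $\Omega^-$ by a diffeomorphism $(x,y)\mapsto(x,z)$: onto the half-space $\R\times(-\infty,0)$ when $\Gamma^-=\emptyset$, and onto the strip $\R\times(-1,0)$ when $\underline b^-\in\dot W^{1,\infty}$, with $z$ built affinely from $\eta$ and $\underline b^-$ so that the separation hypothesis $\mathrm{dist}(\eta,\Gamma^-)>h$ makes the Jacobian uniformly nondegenerate and its regularity is governed by $\|\eta\|_{H^s}$ and $\|\underline b^-\|_{\dot W^{1,\infty}}$. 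Under this change of variables the harmonic extension $\phi^-$ becomes $v(x,z)$ solving a uniformly elliptic equation of the form $\p_z^2 v+\al\,\p_x^2 v+\beta\,\p_x\p_z v+(\text{order }1)=0$ with coefficients $\al,\beta$ smooth functions of $\eta_x$ (and $\underline b^-_x$), with $v|_{z=0}=f$, the conormal condition on the bottom, and $G^-(\eta)f$ expressed through the boundary traces $\p_z v|_{z=0}$ and $f_x$.

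Second, I would record the basic elliptic bound that puts $v$ in the correct space: via the variational formulation on the screened Sobolev space $\widetilde H^{1/2}_-$ of Leoni--Tice \cite{MR3989147} together with interior and up-to-the-boundary elliptic regularity, one gets $\nabla_{x,z}v$ controlled by $\|f\|_{\widetilde H^{\mathfrak s}_-}$ with constant $\lesssim_{\|\eta\|_{H^s}}$, for $\mathfrak s\in[\tfrac12,s]$. This is exactly estimate \eqref{GfEtaf}, and it is the input for the paralinearization.

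Third, and this is the heart of the matter, near $z=0$ I would paralinearize the elliptic equation and factor it, using Alinhac's good unknown to avoid any derivative loss, as $(\p_z-T_a)(\p_z-T_A)v=\text{(smoothing remainder)}$ with $a=a^{(1)}+a^{(0)}$, $A=A^{(1)}+A^{(0)}$, where $a^{(1)},A^{(1)}$ are the two roots in $\p_z$ of the principal symbol and $a^{(0)},A^{(0)}$ are order-zero corrections. The decisive simplification of the two-dimensional case is that, with $x\in\R$ one-dimensional, the quadratic form $(1+\eta_x^2)\xi^2-(\eta_x\xi)^2$ collapses to $\xi^2$, so the outgoing root is exactly $A^{(1)}(x,\xi)=|\xi|$, independent of $\eta$. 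The incoming factor $\p_z-T_a$ is backward parabolic in $z$ and smooths the bottom trace as $z\to0$, forcing $(\p_z-T_A)v|_{z=0}$ to be more regular than $f$; quantifying this with the parabolic regularization estimates for $\p_z-T_a$, the symbolic calculus \eqref{CompositionPara}, and the $\delta$-gain in the paralinearization remainders yields $\p_z v|_{z=0}=T_A f+\rho$ with $\|\rho\|_{H^{\mathfrak s-1+\delta}}\lesssim_{\|\eta\|_{H^s}}\|f\|_{\widetilde H^{\mathfrak s}_-}$. Since $A^{(1)}=|\xi|$, the difference $T_{A^{(1)}}-|D|$ is smoothing (it only touches frequencies $\lesssim1$) and $T_{A^{(0)}}$ has order $0$, so collecting terms (including the order-one $f_x$ contribution, whose symbol is absorbed into the $|\xi|$ computation above) gives $G^-(\eta)f=|D|f+R^-(\eta)f$ with $\|R^-(\eta)f\|_{H^{\mathfrak s-1+\delta}}\lesssim_{\|\eta\|_{H^s}}\|f\|_{\widetilde H^{\mathfrak s}_-}$, the constant depending only on $(s,\mathfrak s,\delta,h)$ and on $\|\underline b^-\|_{\dot W^{1,\infty}}$ through the $C^\epsilon_*$-norm of $\eta_x$ (bounded by $\|\eta\|_{H^s}$) and the ellipticity constants.

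The step I expect to be the main obstacle is carrying this factorization uniformly down to the optimal regularity $\mathfrak s\ge\tfrac12$ while working in the screened space $\widetilde H^{1/2}_-$ rather than the homogeneous space $\dot H^{1/2}$: one must check that the good-unknown substitution, the parabolic trace estimates at $z=0$, and the low-frequency bookkeeping are all consistent with the screening function $\Upsilon=\mathfrak d_-$ (or $\Upsilon=\infty$), so that no estimate secretly requires global $\dot H^{1/2}$ control of $f$. This is precisely what is done in \cite{MR3260858,MR4090462,MR3989147}; since the present configuration is graph-type over a Lipschitz rigid bottom, those arguments apply after replacing the surface-tension/gravity boundary data there by $f$.
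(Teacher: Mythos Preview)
The paper does not supply its own proof of this lemma: it is stated as a citation from \cite{MR3260858, MR4090462} and used as a black box. Your proposal is a faithful sketch of the paradifferential factorization argument from those references, including the correct two-dimensional simplification that the outgoing principal symbol is exactly $|\xi|$; this is the right approach and there is no gap.

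It is worth noting, though, that the paper does prove a closely related refinement (Lemma~\ref{t:elliptic}, the estimate \eqref{GfDRf2}) by a genuinely different route. Rather than the Alinhac good-unknown paralinearization and the factorization $(\p_z-T_a)(\p_z-T_A)$, the paper flattens with the \emph{harmonic extension} $\varrho(x,z)=z+e^{z|D_x|}\eta(x)$, writes the elliptic equation directly as $(\p_z+|D_x|)(\p_z-|D_x|)v=\p_zQ_a[v]+|D_x|Q_b[v]$ with explicit quadratic nonlinearities $Q_a,Q_b$, and then runs a fixed-point argument in the space $X^r$ using the fractional-heat estimates \eqref{SpaceTimeOne}--\eqref{SpaceTimeTwo}. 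Your approach is more robust (it works for general $\eta\in H^s$ without any smallness and in higher dimensions), while the paper's approach is more elementary but requires $\|\eta\|_{W^{1+\epsilon,\infty}}$ small and exploits the exact constant-coefficient factorization that only appears in 2D with this particular flattening. Either method would serve here; for the cited lemma at arbitrary $\|\eta\|_{H^s}$, your paradifferential route is the appropriate one.
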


Finally, we record the contraction estimate for the Dirichlet-Neumann operator.
This will be needed when we prove the uniqueness and the continuous dependence of the solutions.
\begin{lemma}[\hspace{1sp}\cite{MR4090462}]
Let $s>\f32$.
If $f \in \widetilde{H}^{s-\f12}_- (\R)$ and $\eta_1, \eta_2 \in H^s(\R)$ with dist$(\eta_j, \Gamma^-)>h>0$ for $j = 1,2$, then for all $\mathfrak{s} \in [\f12, s]$
\begin{equation} \label{GetaDiff}
\| G^-(\eta_1)f - G^-(\eta_2)f\|_{H^{\mathfrak{s} - 1}} \lesssim_{\|(\eta_1, \eta_2)\|_{H^s \times H^s}}\|\eta_1 - \eta_2 \|_{H^s} \|f\|_{\widetilde{H}^{\mathfrak{s}}_-},
\end{equation}
where the implicit constant in the inequality depends only on $(s, \mathfrak{s}, h)$. 
\end{lemma}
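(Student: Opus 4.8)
I would follow the scheme of \cite{MR4090462}. The first observation is that, by the paralinearization \eqref{GfDRf}, the top-order parts of $G^-(\eta_1)f$ and $G^-(\eta_2)f$ coincide — both equal $|D|f$ — so that
\begin{equation*}
G^-(\eta_1)f - G^-(\eta_2)f = R^-(\eta_1)f - R^-(\eta_2)f ,
\end{equation*}
and it is enough to prove the contraction bound $\|R^-(\eta_1)f-R^-(\eta_2)f\|_{H^{\mathfrak s-1}}\lesssim_{\|(\eta_1,\eta_2)\|_{H^s\times H^s}}\|\eta_1-\eta_2\|_{H^s}\|f\|_{\widetilde H^{\mathfrak s}_-}$ for the (lower-order) remainder. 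In particular no genuine derivative is lost: the principal contributions cancel and the difference is governed entirely by $R^-$.

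To prove this I would flatten both fluid regions onto the fixed strip $\mathcal O=\R\times(-1,0)$ (or $\R\times(-\infty,0)$ when $\Gamma^-=\emptyset$) by diffeomorphisms $\rho_j(x,z)=(x,\delta_j(x,z))$, where $\delta_j$ is a regularized extension of $(\eta_j,\underline b^-)$ chosen so that $\rho_j$ gains half a derivative in $x$ near $z=0$. This is the standard device (see Leoni--Tice \cite{MR3989147} and \cite{MR4090462}) that makes the analysis work at the low regularity $s>\f32$ and that treats uniformly the bottomless case and the Lipschitz-bottom case through the screened spaces $\widetilde H^{1/2}_\pm$. The harmonic extension $\phi^-_j$ of $f$ corresponds to $v_j=\phi^-_j\circ\rho_j$, which solves a uniformly elliptic divergence-form equation $\nabla_{x,z}\cdot(E_j\nabla_{x,z}v_j)=0$ on $\mathcal O$ with $v_j|_{z=0}=f$ and a conormal condition on the bottom, the coefficient matrix $E_j$ being a smooth — hence tame, by the Moser estimate \eqref{MoserOne} — function of $\nabla_{x,z}\delta_j$; all norms of $E_j$ are controlled by $\|\eta_j\|_{H^s}$ and by $\text{dist}(\eta_j,\Gamma^-)\ge h$. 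One recovers $G^-(\eta_j)f$, hence $R^-(\eta_j)f$, from the traces at $z=0$ of $\partial_z v_j$ and $\partial_x v_j$.

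The difference $w:=v_1-v_2$ then solves $\nabla_{x,z}\cdot(E_1\nabla_{x,z}w)=-\nabla_{x,z}\cdot\big((E_1-E_2)\nabla_{x,z}v_2\big)$ on $\mathcal O$ with $w|_{z=0}=0$ and the conormal condition on the bottom. The coefficient difference $E_1-E_2$ is linear in $\delta_1-\delta_2$ up to smooth factors, hence controlled by $\|\eta_1-\eta_2\|_{H^s}$; crucially, the factor $\eta_1-\eta_2$ enters only through $L^\infty$-based norms, which is allowed because $H^s\hookrightarrow W^{1,\infty}$ for $s>\f32$, so that the top-order derivatives always fall on $\nabla v_2$, itself controlled by $\|f\|_{\widetilde H^{\mathfrak s}_-}$. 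I would run the elliptic estimates on $\mathcal O$ in three steps: (i) at $\mathfrak s=\f12$, the plain energy estimate $\|\nabla_{x,z}w\|_{L^2(\mathcal O)}\lesssim\|E_1-E_2\|_{L^\infty}\|\nabla_{x,z}v_2\|_{L^2(\mathcal O)}$, followed by a duality/trace argument to pass to $\|\partial_z w|_{z=0}\|_{H^{-1/2}}$; (ii) for larger $\mathfrak s$, tangential difference quotients and interior elliptic regularity — equivalently, paralinearizing the flattened operator exactly as in the proof of \eqref{GfDRf} — keeping every bound tame; (iii) interpolation in $\mathfrak s$ across $[\f12,s]$. Reading off the trace of $\partial_z w$ at $z=0$ and undoing the flattening yields \eqref{GetaDiff}.

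The main obstacle is steps (ii)--(iii): carrying out elliptic regularity for the difference equation when the coefficients are barely more regular than $W^{1,\infty}$, and tracking tame constants so that only $\|\eta_1-\eta_2\|_{H^s}$ — and no higher Sobolev norm of the difference — multiplies $\|f\|_{\widetilde H^{\mathfrak s}_-}$, uniformly over the two bottom geometries. Once the corresponding bounds for $G^-(\eta)f$ and its paralinearization are in place (as in \cite{MR3260858,MR4090462}), this difference estimate is a perturbative variant of the same scheme. One could instead integrate the shape-derivative identity $d_\eta G^-(\eta)f\cdot h=-G^-(\eta)(h\mathcal B)-\partial_x(h\mathcal V)$, where $\mathcal B$ and $\mathcal V$ are the normal and tangential traces of $\nabla\phi^-$, along the segment $\eta_t=\eta_2+t(\eta_1-\eta_2)$ and use \eqref{GfEtaf} and \eqref{HsLinftyEst} termwise; but then one must still exploit the cancellation of the $|D|f$ contributions along the integration, since the individual terms $G^-(\eta_t)(h\mathcal B_t)$ and $\partial_x(h\mathcal V_t)$ are a full derivative rougher than the bound claimed in \eqref{GetaDiff}.
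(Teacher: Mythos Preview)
The paper does not actually prove this lemma; it is quoted verbatim from Nguyen--Pausader \cite{MR4090462} and stated in the appendix without argument. Your outlined approach --- flatten both domains, write the divergence-form equation for the difference $w=v_1-v_2$ with source $\nabla\cdot((E_1-E_2)\nabla v_2)$, run tame elliptic estimates starting from the variational level $\mathfrak s=\tfrac12$ and bootstrap --- is precisely the scheme carried out in that reference, so your proposal is correct and matches the cited proof.

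One small comment on your closing remark about the alternative shape-derivative route: you are right that the individual terms $G^-(\eta_t)(h\mathcal B_t)$ and $\partial_x(h\mathcal V_t)$ are each one derivative worse than the target space, but the remedy in the literature is not to exhibit an explicit cancellation of $|D|f$ along the path. Rather, one rewrites the shape derivative as a divergence-form source in the flattened elliptic problem (essentially $\nabla\cdot((d_\eta E)\nabla v)$) and runs the same elliptic machinery as in your main argument --- so the two approaches collapse into one another at the technical level. Your instinct that the na\"ive termwise estimate via \eqref{GfEtaf} and \eqref{HsLinftyEst} fails at the endpoint $\mathfrak s=\tfrac12$ is correct.
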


Next, we obtain the space-time estimates for fractional heat equations. 
\begin{lemma}[\hspace{1sp}\cite{MR2768550}] \label{Heates1}
There exist positive constants $c$ and $C$, such that for all $p\in [1, \infty]$, $t>0$, $\alpha_1,\nu_1>0$, $\alpha_2,\nu_2\geq0$ and $k\in \mathbb{Z}$, we have
\begin{equation*}
 \|e^{-t(\nu_1|D_x|^{\alpha_1}+\nu_2|D_x|^{\al_2})}P_ju\|_{L^p(\R)} \leq C e^{-c (2^{\alpha_1j}+2^{\al_2j} )t} \|P_j u\|_{L^p(\R)}.
\end{equation*}
\end{lemma}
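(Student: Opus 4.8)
The plan is to write $e^{-t(\nu_1|D_x|^{\alpha_1}+\nu_2|D_x|^{\alpha_2})}P_j$ as convolution with a kernel carrying the exponential decay, and then pass from the trivial $L^2$ bound to all $L^p$ by Young's inequality. Let $\varphi$ be the Littlewood--Paley profile, so $\widehat{P_ju}(\xi)=\varphi(2^{-j}\xi)\widehat u(\xi)$ with $\operatorname{supp}\varphi$ contained in an annulus $\{c_1\le|\xi|\le c_2\}$, $0<c_1<c_2$, and let $\widetilde\varphi$ be a smooth bump supported in a slightly larger annulus and equal to $1$ on $\operatorname{supp}\varphi$; set $\widetilde P_j=\widetilde\varphi(2^{-j}D_x)$, so that $P_j=\widetilde P_jP_j$. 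Then
\begin{equation*}
 e^{-t(\nu_1|D_x|^{\alpha_1}+\nu_2|D_x|^{\alpha_2})}P_j u = K_{j,t}*(P_ju),\qquad
 K_{j,t}=\mathcal F^{-1}\!\Big[e^{-t(\nu_1|\xi|^{\alpha_1}+\nu_2|\xi|^{\alpha_2})}\,\widetilde\varphi(2^{-j}\xi)\Big],
\end{equation*}
and since Young's inequality gives $\|K_{j,t}*(P_ju)\|_{L^p}\le\|K_{j,t}\|_{L^1}\|P_ju\|_{L^p}$ for every $p\in[1,\infty]$ (including the endpoints, being of the form $L^1*L^p\to L^p$), it suffices to prove $\|K_{j,t}\|_{L^1(\R)}\le C\,e^{-c(\nu_1 2^{\alpha_1 j}+\nu_2 2^{\alpha_2 j})t}$; the form stated in the lemma follows once $c$ is allowed to depend on $\nu_1,\nu_2$, which is harmless since these are fixed parameters in all applications.

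Next I would exploit the $2^j$-scaling. Substituting $\xi=2^j\zeta$ shows $\|K_{j,t}\|_{L^1}=\|\mathcal F^{-1}[g_{a,b}]\|_{L^1}$, where $g_{a,b}(\zeta)=e^{-a|\zeta|^{\alpha_1}-b|\zeta|^{\alpha_2}}\widetilde\varphi(\zeta)$ and $a=\nu_1 2^{\alpha_1 j}t\ge0$, $b=\nu_2 2^{\alpha_2 j}t\ge0$, because the $L^1$ norm of an inverse Fourier transform is invariant under this one-dimensional dilation. The whole problem thus reduces to the single estimate $\|\mathcal F^{-1}[g_{a,b}]\|_{L^1}\le C\,e^{-c(a+b)}$ uniformly in $a,b\ge0$. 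I would derive this from the elementary one-dimensional inequality $\|f\|_{L^1(\R)}\le\sqrt\pi\,\|(1+|\cdot|^2)^{1/2}f\|_{L^2(\R)}\le\sqrt\pi\big(\|f\|_{L^2}+\|\,|\cdot|\,f\|_{L^2}\big)$, applied to $f=\mathcal F^{-1}[g_{a,b}]$ and combined with Plancherel, giving
\begin{equation*}
 \|\mathcal F^{-1}[g_{a,b}]\|_{L^1}\;\lesssim\;\|g_{a,b}\|_{L^2(\R)}+\|g_{a,b}'\|_{L^2(\R)} .
\end{equation*}

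Finally these two $L^2$ norms are estimated on the fixed annulus $\{c_1\le|\zeta|\le c_2\}$ carrying $\widetilde\varphi$. There $|\zeta|^{\alpha_i}$ and $|\zeta|^{\alpha_i-1}$ are bounded above and below by positive constants depending only on $c_1,c_2,\alpha_1,\alpha_2$ --- this is precisely the point where the mild non-smoothness of $|\xi|^{\alpha_i}$ at the origin plays no role, the dyadic block keeping us away from $\xi=0$. Hence $|g_{a,b}(\zeta)|\le e^{-\kappa(a+b)}$ for some $\kappa>0$, and differentiating, $|g_{a,b}'(\zeta)|\lesssim(1+a+b)\,e^{-\kappa(a+b)}$; absorbing the polynomial factor into half of the exponent yields $\|g_{a,b}\|_{L^2}+\|g_{a,b}'\|_{L^2}\le C\,e^{-c(a+b)}$. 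Unwinding the scaling substitution $a=\nu_1 2^{\alpha_1 j}t$, $b=\nu_2 2^{\alpha_2 j}t$ and applying Young's inequality completes the proof. I do not expect a genuine obstacle; the only point demanding a little care is bookkeeping of constants, i.e. checking that the decay rate $c$ depends only on the Littlewood--Paley profile and on $\nu_1,\nu_2,\alpha_1,\alpha_2$, and not on $j$, $t$, or $p$.
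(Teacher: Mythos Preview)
Your argument is correct and is essentially the standard kernel-estimate proof one finds in Bahouri--Chemin--Danchin: fatten the projector, reduce to an $L^1$ bound on the convolution kernel via Young, rescale to a fixed annulus, and control $\|\mathcal F^{-1}g_{a,b}\|_{L^1}$ by a finite number of derivatives of the symbol there. Your bookkeeping of the constants (in particular that the decay rate $c$ necessarily depends on $\nu_1,\nu_2,\alpha_1,\alpha_2$, despite the way the lemma is phrased) is also accurate; the paper's own use of the lemma in the subsequent proposition in fact carries the $\nu_i$'s in the exponent.

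The paper itself does not prove the general $L^p$ statement at all: it simply cites \cite{MR2768550} and remarks that only the case $p=2$ is needed, where the bound is immediate from Plancherel because on $\operatorname{supp}\widehat{P_ju}$ one has $|\xi|\sim 2^j$ and hence the multiplier $e^{-t(\nu_1|\xi|^{\alpha_1}+\nu_2|\xi|^{\alpha_2})}$ is pointwise bounded by $e^{-c(\nu_1 2^{\alpha_1 j}+\nu_2 2^{\alpha_2 j})t}$. So you have supplied strictly more than the paper does --- the paper's route is shorter only because it never leaves $L^2$, whereas your route actually establishes the full $L^p$ claim.
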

Note that our discussion is restricted to the case 
$p=2$, where the proof is quite simple.

By slightly modifying the proof of Proposition $2.9$ in \cite{MR4348695}, we obtain the following result for the space-time estimate for the fractional heat kernel.

\begin{proposition}
Let $s\in \R$, $\nu_1>0$, $\nu_2\geq0$, and $1\leq q_2 \leq q_1 \leq \infty$.
Let $\alpha_1>0$, $\alpha_2\geq0$, $I = [a,b]$, where $a\in \R \cup \{-\infty \}$ and $b\in \R$.
Then there exist positive constants $C_1$ and $C_2$, such that
\begin{align}
 &\|e^{-z(\nu_1  |D_x|^{\alpha_1}+\nu_2|D_x|^{\alpha_2})}u(x) \|_{\widetilde{L}^{q_1}_z(I; H^{s+\frac{\alpha_1}{q_1}})} \leq \frac{C_1}{\nu_1^{\frac{1}{q_1}}}\|u\|_{H^s},  \label{SpaceTimeOne}\\
 &\left\|\int_a^z e^{-(z-y)(\nu_1|D_x|^{\alpha_1}+\nu_2|D_x|^{\alpha_2})}f(x,y)dy \right\|_{\widetilde{L}^{q_1}_z(I; H^{s+\frac{\alpha_1}{q_1}})} \leq \frac{C_2}{\nu_1^{1+\frac{1}{q_1}-\frac{1}{q_2}}} \|f\|_{\widetilde{L}^{q_2}(I; H^{s-\alpha_1+\frac{\alpha_1}{q_2}})}. \label{SpaceTimeTwo}
\end{align}
\end{proposition}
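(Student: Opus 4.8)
The plan is to prove both estimates block by block in the homogeneous Littlewood--Paley decomposition $I=\sum_{j\in\Z}P_j$ that underlies the $\widetilde L^q$--spaces (note the $\ell^2(\Z)$ indexing and the quotient by polynomials), and then recombine the blocks. The starting point is the frequency-localized semigroup bound, which in the case $p=2$ of Lemma~\ref{Heates1} is immediate from Plancherel's theorem: since $|\xi|\sim 2^j$ on the support of the symbol of $P_j$, and $\nu_1|\xi|^{\alpha_1}+\nu_2|\xi|^{\alpha_2}\ge\nu_1|\xi|^{\alpha_1}\gtrsim\nu_1 2^{\alpha_1 j}$ there, one has for every $z\ge 0$
\[
\big\|e^{-z(\nu_1|D_x|^{\alpha_1}+\nu_2|D_x|^{\alpha_2})}P_j u\big\|_{L^2(\R)}\le C\,e^{-c\,\nu_1 2^{\alpha_1 j} z}\,\|P_j u\|_{L^2(\R)}.
\]
Working with blocks indexed by all of $\Z$ modulo polynomials is what makes this decay genuine for every block, with no non-decaying zero-frequency piece.

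For \eqref{SpaceTimeOne} I would take the $L^{q_1}_z(I)$--norm of the display above, bounding it by the $L^{q_1}_z((0,\infty))$--norm and using $\|e^{-\beta z}\|_{L^{q_1}(0,\infty)}=(q_1\beta)^{-1/q_1}$ with $\beta=c\,\nu_1 2^{\alpha_1 j}$ (and the value $1$ when $q_1=\infty$). This gives
\[
\big\|e^{-z(\nu_1|D_x|^{\alpha_1}+\nu_2|D_x|^{\alpha_2})}P_j u\big\|_{L^{q_1}_z(I;L^2_x)}\le C\,(q_1 c)^{-1/q_1}\,\nu_1^{-1/q_1}\,2^{-\alpha_1 j/q_1}\,\|P_j u\|_{L^2}.
\]
Multiplying by $2^{(s+\alpha_1/q_1)j}$ and taking $\ell^2(\Z)$ in $j$, the factor $2^{-\alpha_1 j/q_1}$ exactly cancels the regularity gain, leaving $\|u\|_{H^s}$; since $\sup_{q_1\in[1,\infty]}(q_1 c)^{-1/q_1}<\infty$, the constant $C_1$ is independent of $q_1,\nu_1,\nu_2$ and of $I$.

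For \eqref{SpaceTimeTwo}, set $g_j(y):=\|P_j f(\cdot,y)\|_{L^2_x}$ and $K_j(z):=e^{-c\,\nu_1 2^{\alpha_1 j}z}\mathbf{1}_{\{z\ge 0\}}$. Applying $P_j$ under the integral and using the block bound, for $z\in I$,
\[
\Big\|P_j\!\int_a^z e^{-(z-y)(\nu_1|D_x|^{\alpha_1}+\nu_2|D_x|^{\alpha_2})}f(\cdot,y)\,dy\Big\|_{L^2_x}\le C\,\big(K_j*(g_j\mathbf{1}_I)\big)(z),
\]
since $K_j$ is supported in $[0,\infty)$. Young's convolution inequality in $z$ with $\frac1r=1+\frac1{q_1}-\frac1{q_2}$ (so $r\ge 1$ because $q_2\le q_1$) yields $\|K_j*(g_j\mathbf{1}_I)\|_{L^{q_1}_z}\le\|K_j\|_{L^r(\R)}\|g_j\|_{L^{q_2}_z(I)}$ with $\|K_j\|_{L^r(\R)}=(rc)^{-1/r}\nu_1^{-1/r}2^{-\alpha_1 j/r}$. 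The key bookkeeping identity is
\[
\Big(s+\frac{\alpha_1}{q_1}\Big)-\frac{\alpha_1}{r}=s-\alpha_1+\frac{\alpha_1}{q_2},
\]
so after multiplying by $2^{(s+\alpha_1/q_1)j}$ and taking $\ell^2(\Z)$ one lands precisely on $\|f\|_{\widetilde L^{q_2}(I;H^{s-\alpha_1+\alpha_1/q_2})}$, with prefactor $\nu_1^{-1/r}=\nu_1^{-(1+1/q_1-1/q_2)}$; $\sup_{r\in[1,\infty]}(rc)^{-1/r}<\infty$ makes $C_2$ uniform. The endpoint cases $q_1=\infty$ (then $r$ is the conjugate of $q_2$) and $q_2=1$ (then $r=q_1$) are handled by the same computation.

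The argument is essentially routine; the only points needing care are: (i) the two exponent identities matching the $\alpha_1/q_i$ gains to the powers of $2^{\alpha_1 j}$ produced by the time integrations; (ii) checking that $(q_1 c)^{-1/q_1}$ and $(rc)^{-1/r}$ remain bounded as $q_1,q_2$ (equivalently $r$) range over $[1,\infty]$, so that $C_1,C_2$ are genuinely independent of $q_1,q_2,\nu_1,\nu_2$; and (iii) the harmless replacement of the possibly infinite interval $I$ by $(0,\infty)$ or $\R$, justified by integrability of the kernels $e^{-\beta z}\mathbf{1}_{\{z\ge 0\}}$ there. No substantive obstacle arises beyond this accounting, which is why the result is a slight modification of Proposition~2.9 in \cite{MR4348695}.
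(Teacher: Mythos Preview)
Your proposal is correct and follows essentially the same route as the paper's proof: both arguments work block by block via the frequency-localized decay $\|e^{-z(\nu_1|D_x|^{\alpha_1}+\nu_2|D_x|^{\alpha_2})}P_j u\|_{L^2}\le C e^{-c\nu_1 2^{\alpha_1 j}z}\|P_j u\|_{L^2}$, integrate directly in $z$ for \eqref{SpaceTimeOne}, and apply Young's convolution inequality with $\tfrac{1}{r}=1+\tfrac{1}{q_1}-\tfrac{1}{q_2}$ for \eqref{SpaceTimeTwo}, the exponent bookkeeping being identical. Your additional remarks on the uniform boundedness of $(qc)^{-1/q}$ and on the homogeneous indexing $j\in\Z$ are accurate refinements that the paper leaves implicit.
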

\begin{proof}
By Lemma \ref{Heates1}, we have
\begin{align*}
\|P_je^{-z(\nu_1  |D_x|^{\alpha_1}+\nu_2|D_x|^{\alpha_2})}u\|_{L^2(\R)}\leq& C e^{-c (\nu_12^{\alpha_1 j}+\nu_22^{\al_2j} )z} \|P_j u\|_{L^2(\R)}\\
\leq& C e^{-c \nu_12^{\alpha_1 j}z} \|P_j u\|_{L^2(\R)}.
\end{align*}
We then  get \eqref{SpaceTimeOne} from
\begin{align*}
\left(\int_{a}^be^{-c \nu_12^{\alpha_1j}q_1z}dz\right)^{\f{1}{q_1}}
=(cq_1)^{-\f{1}{q_1}}\f{2^{-\f{\al_1}{q_1} j}}{\nu_1^{\f1{q_1}}}\left(\int_a^be^{-c \nu_12^{\alpha_1j}q_1z}d(c\nu_12^{\al_1}q_1z)\right)^{\f1{q_1}}\leq C_1\f{2^{-\f{\al_1}{q_1} j}}{\nu_1^{\f1{q_1}}}.
\end{align*}

By Lemma \ref{Heates1} and Young's inequality in $z$, we have
\begin{align*}
&\left\|P_j\int_a^z e^{-(z-y)(\nu_1|D_x|^{\alpha_1}+\nu_2|D_x|^{\alpha_2})}f(x,y)dy \right\|_{L^{q_1}_z(I; L^2(\R))}\\
\leq& C\left\|\int_a^b e^{-c (\nu_12^{\alpha_1 j}+\nu_22^{\al_2j} )(z-y)}\|P_jf(\cdot,y)\|_{L^2}dy \right\|_{L^{q_1}_z(I)}\\
\leq& C\left\|\int_a^b e^{-c\nu_12^{\alpha_1 j}(z-y)}\|P_jf(\cdot,y)\|_{L^2}dy \right\|_{L^{q_1}_z(I)} \leq \f{C_2}{\nu_1^{1+\f1{q_1}-\f{1}{q_2}}}2^{\al_1j\big(-1+\f1{q_2}-\f{1}{q_1}\big)}\|P_jf\|_{L^{q_2}_z(I;L^2)},
\end{align*}
which implies \eqref{SpaceTimeTwo} by summing $j$ from $0$ to $\infty$.
\end{proof}
These results are used with either $\alpha_1 = 5, \alpha_2 = 1$ or $\alpha_1 = 1, \alpha_2 = 0$.

To obtain solutions of equations as fixed points of the mappings, we record here two fixed-point results from \cite{MR4348695}.
\begin{lemma}[\hspace{1sp}\cite{MR4348695}] \label{t:fixedpoint}
Let $(E, \|\cdot\|)$ be a Banach space, and let $\nu >0$.
Denote by $B_\nu$ the closed ball of radius $\nu$ centered at $0$ in $E$. 
Assume that $\mathcal{B}:E\times E \rightarrow E$ and there exists $\mathcal{F}: \R^+ \rightarrow \R^+$ such that
\begin{equation*} 
\nu \mathcal{F}(2\nu) \leq \f12,
\end{equation*}
and the following two conditions hold:
\begin{itemize}
\item For all $x\in B_\nu$, $\mathcal{B}(x, \cdot)$ is linear and
\begin{equation*} 
 \|\mathcal{B}(x, y)\| \leq \mathcal{F}(\|x\|)\|x\| \|y\|, \quad \forall y\in B_\nu.
\end{equation*}
\item For all $x_1, x_2, y \in B_\nu$,
\begin{equation*} 
 \|\mathcal{B}(x_1, y) - \mathcal{B}(x_2, y) \| \leq \mathcal{F}(\|x_1\| + \|x_2\|)\|x_1 - x_2\| \|y\|.
\end{equation*}
Then there exists $\delta = \delta(\nu, \mathcal{F})>0$ small enough such for all $x_0 \in E$ with norm less than $\delta$, $x \mapsto x_0 + \mathcal{B}(x,x)$ has a unique fixed point $x_*$ in $B_\nu$ with $\|x_*\| \leq 2\|x_0\|$.
\end{itemize}
\end{lemma}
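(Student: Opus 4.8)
The plan is to run the contraction-mapping principle for the map $\Phi(x):=x_0+\mathcal{B}(x,x)$, but on a ball whose radius is tied to $\|x_0\|$ rather than to $\nu$, and then to promote uniqueness from that small ball to all of $B_\nu$ by an a priori estimate. Throughout I may and do assume that $\mathcal{F}$ is non-decreasing (if not, replace it by its monotone majorant $t\mapsto\sup_{0\le s\le t}\mathcal{F}(s)$, which is locally finite under the mild standing assumption that $\mathcal{F}$ is locally bounded). A first elementary remark is that $\mathcal{B}(0,\cdot)=0$: taking $x=0$ in the first listed hypothesis gives $\|\mathcal{B}(0,y)\|\le\mathcal{F}(0)\cdot 0\cdot\|y\|=0$ for every $y\in B_\nu$.

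Next I set $R:=2\|x_0\|$ and require $\delta\le\nu/4$, so that whenever $\|x_0\|<\delta$ one has $2R\le\nu$, in particular $B_R\subset B_\nu$ and both hypotheses are available throughout $B_R$. For the self-mapping property, for $x\in B_R$ I would estimate, using the first hypothesis and monotonicity of $\mathcal{F}$,
\[
\|\Phi(x)\|\le\|x_0\|+\mathcal{F}(\|x\|)\|x\|^2\le\|x_0\|+\mathcal{F}(R)\,R\cdot R\le\|x_0\|+\tfrac12 R=R,
\]
where $\mathcal{F}(R)R\le\mathcal{F}(\nu)\nu\le\mathcal{F}(2\nu)\nu\le\tfrac12$ by the standing bound $\nu\mathcal{F}(2\nu)\le\tfrac12$; hence $\Phi(B_R)\subset B_R$. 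For the contraction property, for $x_1,x_2\in B_R$ I would write $\Phi(x_1)-\Phi(x_2)=\mathcal{B}(x_1,x_1-x_2)+\bigl(\mathcal{B}(x_1,x_2)-\mathcal{B}(x_2,x_2)\bigr)$, bound the first term by the first hypothesis (linearity in the second argument) by $\mathcal{F}(\|x_1\|)\|x_1\|\,\|x_1-x_2\|$, and the second term by the second hypothesis by $\mathcal{F}(\|x_1\|+\|x_2\|)\|x_2\|\,\|x_1-x_2\|$, so that
\[
\|\Phi(x_1)-\Phi(x_2)\|\le\bigl(\mathcal{F}(R)R+\mathcal{F}(2R)R\bigr)\|x_1-x_2\|\le 2\mathcal{F}(2R)\,R\,\|x_1-x_2\|\le 2\mathcal{F}(\nu)\cdot\tfrac{\nu}{2}\,\|x_1-x_2\|\le\tfrac12\|x_1-x_2\|,
\]
using $R\le\nu/2$ and $2R\le\nu$. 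Thus $\Phi$ is a $\tfrac12$-contraction on $B_R$.

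Since $B_R$ is a closed subset of the Banach space $E$, it is a complete metric space, so the Banach fixed-point theorem yields a unique fixed point $x_*\in B_R$, which by construction satisfies $\|x_*\|\le R=2\|x_0\|$. It remains to see that $x_*$ is the \emph{only} fixed point in the whole ball $B_\nu$, which I would get from an a priori bound: if $x\in B_\nu$ satisfies $x=\Phi(x)$, then, as above,
\[
\|x\|\le\|x_0\|+\mathcal{F}(\|x\|)\|x\|^2\le\|x_0\|+\mathcal{F}(\nu)\nu\,\|x\|\le\|x_0\|+\tfrac12\|x\|,
\]
so $\|x\|\le 2\|x_0\|=R$; hence every fixed point in $B_\nu$ already lies in $B_R$ and must equal $x_*$. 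I do not expect any genuine obstacle in this argument; the only points that demand a little care are the bookkeeping of how small $\delta$ must be chosen so that the self-map and contraction constants are simultaneously at most $\tfrac12$, and the a priori bound in the last step, which is what upgrades uniqueness from $B_R$ to all of $B_\nu$.
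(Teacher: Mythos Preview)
Your argument is correct: the choice $R=2\|x_0\|$, the self-map and $\tfrac12$-contraction estimates on $B_R$, and the a priori bound promoting uniqueness from $B_R$ to $B_\nu$ all go through as written (the only mild caveat is the monotone-majorant step for $\mathcal{F}$, which you already flagged). Note, however, that the paper does not supply its own proof of this lemma: it is recorded in the appendix as a quotation from \cite{MR4348695} and is used as a black box in Section~\ref{s:Global}. So there is no ``paper's proof'' to compare against; your Banach fixed-point argument is exactly the kind of standard verification one expects behind the cited statement.
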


\begin{lemma}[\hspace{1sp}\cite{MR4348695}]\label{t:fp}
Let $E_1$ and $E_2$ be two norm spaces such that $E_1$ is complete. Assume that $E_2$ has the Fatou property: if $(u_n)$ is a bounded sequence in $E_2$ then there exist $u\in E_2$ and a subsequence $(u_{n_k})$ such that $u_{n_k}\to u$ in a sense weaker than norm convergence in $E_1$ and that 
\[
\| u\|_{E_2}\le C\liminf_{k\to \infty} \| u_{n_k}\|_{ E_2}.
\]
 Assume that $K:E_1\to E_1$ is a linear such that $K: E_1\cap E_2\to E_2$ and the following property holds.  There exist $(\alpha_1, \alpha_2)\in (0, 1)^2$ and $A>0$ such that for all $u\in E_1\cap E_2$, 
\begin{align*}
&\| K(u)\|_{E_1}\le \alpha_1 \| u\|_{E_1}, \\
&\|K(u)\|_{E_2}\le \alpha_2\| u\|_{E_2}+A \| u\|_{E_1}. 
\end{align*}
Then for any $u_0\in E_1\cap E_2$ there exists a unique fixed point $u_*\in E_1\cap E_2$ of the mapping $u\mapsto u_0+K(u)$.
\end{lemma}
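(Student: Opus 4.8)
The plan is to run the Banach fixed point theorem on the complete space $E_1$, then derive a uniform $E_2$-bound on the Picard iterates, and finally use the Fatou property to promote the $E_1$-fixed point to one lying in $E_1\cap E_2$. First I would note that $\Phi(u):=u_0+K(u)$ is affine with linear part $K$, so by linearity $\|\Phi(u)-\Phi(v)\|_{E_1}=\|K(u-v)\|_{E_1}\le\alpha_1\|u-v\|_{E_1}$ with $\alpha_1<1$; since $E_1$ is complete, the contraction mapping principle yields a unique fixed point $u_*\in E_1$, and the Picard iterates $u^{(0)}:=u_0$, $u^{(n+1)}:=u_0+K(u^{(n)})$ converge to $u_*$ in $E_1$. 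In particular $M:=\sup_n\|u^{(n)}\|_{E_1}$ is finite (in fact $M\le(1-\alpha_1)^{-1}\|u_0\|_{E_1}$ by summing a geometric series, but only finiteness is used below).

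Next I would bootstrap to $E_2$. Since $u_0\in E_1\cap E_2$ and $K$ maps $E_1\cap E_2$ into $E_2$, an induction gives $u^{(n)}\in E_1\cap E_2$ for all $n$, and the second hypothesis on $K$ yields
\[
\|u^{(n+1)}\|_{E_2}\le\|u_0\|_{E_2}+\alpha_2\|u^{(n)}\|_{E_2}+A\|u^{(n)}\|_{E_1}\le\bigl(\|u_0\|_{E_2}+AM\bigr)+\alpha_2\|u^{(n)}\|_{E_2}.
\]
Iterating this inequality, whose amplification factor $\alpha_2$ is strictly less than $1$, produces a uniform bound $\sup_n\|u^{(n)}\|_{E_2}<\infty$. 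The Fatou property then supplies $u\in E_2$ and a subsequence with $u^{(n_k)}\to u$ in a topology weaker than $E_1$-norm convergence and with $\|u\|_{E_2}\le C\liminf_k\|u^{(n_k)}\|_{E_2}$. Since $u^{(n_k)}\to u_*$ in $E_1$ and hence also in that weaker topology, uniqueness of limits forces $u=u_*$, so $u_*\in E_1\cap E_2$. Uniqueness of the fixed point in $E_1\cap E_2$ is then immediate: any such fixed point is a fixed point in $E_1$, where uniqueness is already known.

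The one genuinely delicate point will be the identification $u=u_*$ in the last step, which rests on the ``weaker-than-$E_1$-norm'' convergence furnished by the Fatou property being Hausdorff (so that limits are unique) and compatible with $E_1$-norm convergence; this is precisely the structural content of how the Fatou property is stated, and once it is in hand the remainder is the routine contraction-plus-bootstrap scheme. In the applications in this paper $E_1=H^s$ and $E_2=H^{r+1}$, and the required compatibility is automatic, since all iterates live in a fixed space of tempered distributions in which distributional (or weak-$\ast$) limits are unique.
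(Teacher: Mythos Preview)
The paper does not supply its own proof of this lemma; it is quoted verbatim from \cite{MR4348695} and used as a black box in Proposition~\ref{Prop:f-} and Lemma~\ref{t:elliptic}. Your argument---contraction in $E_1$ to obtain the fixed point and $E_1$-convergence of the Picard iterates, then a uniform $E_2$-bound on the iterates via the geometric recursion, then the Fatou property to place the limit in $E_2$---is the standard and correct scheme, and you have correctly flagged the only subtle point (identifying the Fatou-limit with the $E_1$-limit via Hausdorffness of the weaker topology). There is nothing further to compare against in this paper.
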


Finally, following the idea in Proposition $3.4$ in \cite{MR4348695}, we prove a refined remainder estimate of the Dirichlet-Neumann operator at higher regularity.
\begin{lemma}\label{t:elliptic}
Let $\eta ,f\in  H^{1+r}(\R)$ for $r>\f12$.
Suppose the lower boundary $\Gamma^-$ is either empty or flat $(\underline{b}^{-}_x = 0)$.
There exists a constant $c_1<1$ such that if
$\|\eta\|_{W^{1+\epsilon,\infty}}<c_1$ for $\epsilon$ sufficiently small, then
\begin{equation} \label{GfDRf2}
G^-(\eta)f = |D|f + R^-(\eta)f, \quad \| R^-(\eta)f\|_{H^{r}} \lesssim \| \eta\|_{W^{1+\epsilon,\infty}} \|f\|_{H^{r+1}} + \| f\|_{W^{1,\infty}} \|\eta\|_{H^{r+1}}.
\end{equation}
\end{lemma}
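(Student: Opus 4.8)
The plan is to implement the strategy of Proposition~3.4 in \cite{MR4348695}, which I now outline: flatten the fluid domain $\Omega^-$ to a fixed strip (a half-space when $\Gamma^-=\emptyset$), rewrite the harmonic-extension problem defining $\phi^-$ as a divergence-form elliptic equation whose coefficients deviate from the flat ones only through $\eta$, and then estimate the resulting correction by a \emph{tame} elliptic estimate keeping one factor of $\eta$ in the low norm $\|\eta\|_{W^{1+\epsilon,\infty}}$ and the other in $\|\eta\|_{H^{r+1}}$.

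\textbf{Step 1: flattening.} Because $\Gamma^-$ is flat or empty, the map $(x,y)\mapsto(x,z)$ with $z=y-\eta(x)$ (or a variant equal to the identity near the rigid bottom, producing a fixed strip) sends $\Omega^-$ onto a fixed domain $S$ and transforms $\Delta_{x,y}\phi^-=0$ into
\begin{equation*}
\nabla_{x,z}\cdot\big(A(\eta)\,\nabla_{x,z}v\big)=0\ \text{ in }S,\qquad v|_{z=0}=f,\qquad \partial_z v|_{\text{bottom}}=0,
\end{equation*}
where $v$ is $\phi^-$ in the new coordinates and $A(\eta)$ is a symmetric, uniformly elliptic matrix with $A(0)=\mathrm{Id}$ and $A(\eta)-\mathrm{Id}$ depending at most bilinearly on $\eta_x$. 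Hence
\begin{equation*}
\|A(\eta)-\mathrm{Id}\|_{L^\infty_z C^{\epsilon}_{*,x}}\lesssim\|\eta\|_{W^{1+\epsilon,\infty}},\qquad \|A(\eta)-\mathrm{Id}\|_{L^\infty_z H^{r}_x}\lesssim_{\|\eta\|_{W^{1,\infty}}}\|\eta\|_{H^{r+1}},
\end{equation*}
and in these coordinates $G^-(\eta)f$ equals $\partial_z v|_{z=0}$ up to explicit boundary terms which are products of an entry of $A(\eta)-\mathrm{Id}$ with $\partial_x f$ or with $\partial_z v|_{z=0}$.

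\textbf{Step 2: subtracting the flat extension.} Let $w$ solve $\Delta_{x,z}w=0$ in $S$ with $w|_{z=0}=f$ and the same bottom condition; then $\partial_z w|_{z=0}=-|D|f$ modulo a smoothing operator, and the Poisson representation gives control of $\nabla_{x,z}w$ by $\|f\|_{H^{r+1}}$ in the strong anisotropic norm $L^\infty_z H^r_x$, and by $\|f\|_{W^{1,\infty}}$ in the weak one (here one uses boundedness of the Poisson semigroup on $L^\infty$ for the tangential derivative and handles the normal derivative through $\partial_z w=-H\partial_x w$). The correction $\tilde v=v-w$ solves
\begin{equation*}
\nabla_{x,z}\cdot\big(A(\eta)\nabla_{x,z}\tilde v\big)=-\nabla_{x,z}\cdot\big((A(\eta)-\mathrm{Id})\nabla_{x,z}w\big)\ \text{ in }S,\qquad \tilde v|_{z=0}=0,\ \partial_z\tilde v|_{\text{bottom}}=0,
\end{equation*}
so $R^-(\eta)f=\partial_z\tilde v|_{z=0}$ plus the explicit products of Step~1, each bounded by $\|\eta\|_{W^{1+\epsilon,\infty}}\|f\|_{H^{r+1}}+\|f\|_{W^{1,\infty}}\|\eta\|_{H^{r+1}}$ via \eqref{HsLinftyEst} and the embedding $H^{r+1}\hookrightarrow W^{1,\infty}$, valid since $r>\tfrac12$.

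\textbf{Step 3: tame elliptic estimate and trace.} It remains to prove, for $r>\tfrac12$ and $\|\eta\|_{W^{1+\epsilon,\infty}}\le c_1$ with $c_1$ small enough,
\begin{equation*}
\|\nabla_{x,z}\tilde v\|_{C_z H^r_x\cap L^2_z H^{r+1/2}_x}\lesssim\|\eta\|_{W^{1+\epsilon,\infty}}\|f\|_{H^{r+1}}+\|f\|_{W^{1,\infty}}\|\eta\|_{H^{r+1}},
\end{equation*}
after which $\partial_z\tilde v|_{z=0}\in H^r_x$ follows from the trace theorem, using the equation to bound $\partial_z^2\tilde v$ in the complementary norm. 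This estimate is obtained by the energy method: localize in the tangential variable (apply $\langle D_x\rangle^r$, or the projectors $P_k$), test against $\langle D_x\rangle^r\tilde v$, and use coercivity of $\int_S A\nabla_{x,z}\cdot\nabla_{x,z}$ together with $\tilde v|_{z=0}=0$. The error terms split into a principal part carrying the coefficient $\lesssim\|A(\eta)-\mathrm{Id}\|_{L^\infty_z C^\epsilon_{*,x}}\lesssim\|\eta\|_{W^{1+\epsilon,\infty}}$, absorbed into the coercive term precisely because $c_1$ is small, and a part in which the tangential derivatives land on $A(\eta)-\mathrm{Id}$, contributing $\|\eta\|_{H^{r+1}}$ paired with low-order factors of $\nabla_{x,z}w$; the $\epsilon$-H\"older regularity of $\eta_x$ is what renders the relevant commutators and paraproduct remainders sub-principal and is the reason for the hypothesis $\eta\in W^{1+\epsilon,\infty}$. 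Inserting the bounds on $\nabla_{x,z}w$ from Step~2 yields the claim.

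\textbf{Main obstacle.} The heart of the matter is Step~3: the tame anisotropic elliptic estimate on the strip at the borderline regularity $r>\tfrac12$, with coefficients only in $L^\infty_z C^\epsilon_{*,x}\cap L^\infty_z H^r_x$. One must organize the paralinearization of $\nabla_{x,z}\cdot(A\nabla_{x,z}\tilde v)$ so that the genuinely principal errors carry the small factor $\|\eta\|_{W^{1+\epsilon,\infty}}$ (absorbable into coercivity) while all high-regularity errors carry $\|\eta\|_{H^{r+1}}$ (paired with $\|f\|_{W^{1,\infty}}$), and control the mild $z\to0^-$ singularity of $\partial_z w$ both when forming the source term and when taking the trace; this boundary bookkeeping at low $x$-regularity is where the hypothesis on $\eta$ and the $\epsilon$-losses in the product estimates of Appendix~\ref{s:Paradifferential} are used most delicately. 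The remaining steps are routine applications of \eqref{HsLinftyEst}, \eqref{Paralinearization}, and standard elliptic and trace theory.
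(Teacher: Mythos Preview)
Your strategy is sound but takes a genuinely different route from the paper's. You flatten via the simple vertical shift $z=y-\eta(x)$, obtaining a divergence-form equation with $z$-independent coefficients $A(\eta)$, subtract the flat Poisson extension $w$, and propose to close with an energy-type tame elliptic estimate on $\tilde v=v-w$ followed by the trace theorem. The paper instead uses the \emph{regularizing} flattening $\varrho(x,z)=z+e^{z|D_x|}\eta(x)$, factorizes the equation as $(\partial_z+|D_x|)(\partial_z-|D_x|)v=\partial_z Q_a[v]+|D_x|Q_b[v]$, and recovers $v$ as the fixed point of an explicit integral operator built from the Poisson semigroup, via the two-norm Lemma~\ref{t:fp} with $E_1=\mathcal{V}_*$ (an $L^\infty$-based norm) and $E_2=\mathcal{V}^r$ (the Sobolev norm); the remainder $R^-(\eta)f$ is then read off as a boundary integral and bounded directly through an $L^\infty_z H^r_x$ estimate on $(Q_a,Q_b)$ using \eqref{HsLinftyEst}, bypassing trace theory entirely. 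The paper's route makes the tame bilinear structure completely explicit on the concrete quadratic expressions $Q_a,Q_b$, and the Poisson smoothing of $\eta$ inside the flattening supplies $\widetilde L^1_z$ gains that are convenient for the fixed-point argument. Your approach is more elementary in spirit (no integral representation, no fixed-point machinery), but concentrates all the analytic difficulty into Step~3, which here remains a sketch; should you pursue it, the delicate points are precisely those you flag---organizing the paralinearization of $\nabla\!\cdot\!(A\nabla\tilde v)$ so that the absorbable part carries only $\|\eta\|_{W^{1+\epsilon,\infty}}$, and controlling $\partial_z\tilde v|_{z=0}$ in $H^r$ at the borderline $r>\tfrac12$.
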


\begin{proof}
 For simplicity, we assume that $\Gamma^- = \emptyset$, so that the domain $\Omega^-$ has infinite depth.
 The analysis for the flat lower boundary case is similar, and one just needs to replace $H^{1+r}$ by $\widetilde{H}^{1+r}_-$.
 We fix the time $t$, and consider the elliptic problem in $\Omega^-$:
 \begin{equation}\label{eq:elliptic}
\begin{cases}
\Delta_{x, y} \phi=0\quad\text{in}~\Omega^-,\\
\phi=f\quad\text{on}~\Sigma,\\
\nabla_{x, y}\phi\to 0\quad\text{as}~y\to -\infty.
\end{cases}
\end{equation}
Assume that $\phi$ is a smooth solution of \eqref{eq:elliptic}. For $J=(-\infty, 0)$, we straighten the free boundary  using the change of variables $\R\times J\ni (x, z)\mapsto (x, \varrho(x, z))\in \Omega^-$, where
\begin{equation*}
\varrho(x, z)=z+H(x, z),\quad H(x, z)=e^{z|D_x|}\eta(x),\quad (x, z)\in \R\times J.
\end{equation*}
Clearly, $\varrho(x, 0)=\eta(x)$ and $\varrho(x,z)\to -\infty$ as $z\to -\infty$.
Using \eqref{SpaceTimeOne}, we get
\begin{equation} \label{HRhoBound}
 \|H \|_{\widetilde{L} ^1(J; H^{r+5})} \lesssim \| \eta\|_{ H^{r+4}}, \quad  \|H \|_{\widetilde{L} ^\infty(J; H^{r})} \lesssim \| \eta\|_{ H^{r}}.
\end{equation}

Since $\phi$ is harmonic in $\Omega^-$, a direct computation shows that $v(x, z)=\phi(x,\varrho(x, z))$ solves the elliptic equation
\begin{equation}\label{eq:Deltav}
(\p_z+|D_x|)(\p_z-|D_x|)v=\partial_zQ_a[v]+|D_x|Q_b[v]\quad\text{in} ~\R \times J,
\end{equation}
 where $Q_a[v]$ and $Q_b[v]$ are given by
\begin{equation*}
\begin{aligned}
&Q_a[v]=\partial_xH\cdot \partial_xv-\frac{|\partial_xH|^2-|D_x|H}{1+|D_x|H}\partial_zv,\\
&Q_b[v]=|D_x|^{-1}\partial_x\big(\partial_xH\partial_zv-|D_x|H\partial_xv\big).
\end{aligned}
\end{equation*}

From the equation \eqref{eq:Deltav}, we get that $v$ is a fixed point of the operator 
\begin{equation}\label{def:cT}
\begin{aligned}
\mathcal{T}[v](x, z)&=e^{z|D_x|}f(x)+\int_0^ze^{(z-z')|D_x|}Q_a[v](x, z')dz'\\
&+\int_0^ze^{(z-z')|D_x|}\int_{-\infty}^{z'}e^{-(z'-\tau)|D_x|}|D_x|\{Q_b[v](x, \tau)-Q_a[v](x, \tau)\}d\tau dz'.
\end{aligned}
\end{equation}
If $v$  is a fixed point of $\mathcal{T}$ \eqref{def:cT}, then 
 the Dirichlet-Neumann operator $G^-(\eta)f$ is defined by
\begin{equation}\label{def:R}
G^-(\eta)f=|D_x|f+R^-(\eta)f:=|D_x|f+\int_{-\infty}^0e^{\tau |D_x|}|D_x|\{Q_b[v](x, \tau)-Q_a[v](x, \tau)\}d\tau.
\end{equation}
Define the auxiliary function
\begin{equation*}
    w(x,y) : = \int_{-\infty}^ye^{-(y-\tau)|D_x|}|D_x|\{Q_b[v](x, \tau)-Q_a[v](x, \tau)\}d\tau,\quad y\le 0.
\end{equation*}
The estimate \eqref{SpaceTimeTwo} with $\nu_1 = \alpha_1 = 1$, and $\nu_2 = \alpha_2 = 0$ shows that
\begin{equation}\label{Xest:w10}
 \| w\|_{X^r(J)}\le C \| |D_x|(Q_b-Q_a)\|_{\widetilde{L}^1(J;  H^{r+4})}\lesssim \| (Q_a, Q_b)\|_{\widetilde{L} ^1(J; H^{r+5}\times H^{r+5})}.
\end{equation}
We can also write the operator $\mathcal{T}$ as 
\begin{equation*}
\mathcal{T}[v](x, z)=e^{z|D_x|}f(x)+K[v](x, z):=e^{z|D_x|}f(x)+\int_0^ze^{(z-z')|D_x|}\{w(x, z')+Q_a(x, z')\}dz'.
\end{equation*}
We have the estimate
\begin{equation*}
 \||D_x| (K[v]) \|_{X^r(J)} \lesssim \| w\|_{\widetilde{L} ^1(J; H^{r+5})} + \|Q_a\|_{\widetilde{L} ^1(J; H^{r+5})} \lesssim \| (Q_a, Q_b)\|_{\widetilde{L} ^1(J; H^{r+5}\times H^{r+5})}.
\end{equation*}
We compute the $z-$partial derivative of $K[v]$:
\begin{equation*}
\label{form:dzK}
\p_z K[v](x, z)=\int_0^ze^{(z-z')|D_x|}|D_x|\{w(x, z')+Q_a(x, z')\}dz'+w(x, z)+Q_a(x, z),
\end{equation*}
so that by using estimates \eqref{SpaceTimeTwo} and \eqref{Xest:w10}, 
\begin{align*}
 \|\p_z K[v](x, z)\|_{\wt L^1(J; H^{r+5})}&\lesssim\| w\|_{\wt L^1(H^{r+5})}+\| Q_a\|_{\wt L^1(J; H^{r+5})}\lesssim \| (Q_a, Q_b)\|_{\wt L^1(J; H^{r+5}\times H^{r+5})}, \\
  \|\p_z K[v](x, z)\|_{\wt L^\infty(J; H^{r})}&\lesssim \| w\|_{\wt L^1(H^{r+1})}+\| Q_a\|_{\wt L^1(J; H^{r+1})} + \| w\|_{\wt L^\infty (J;H^{r})} + \|Q_a\|_{\wt L^\infty(J; H^{r})} \\
 &\lesssim \| (Q_a, Q_b)\|_{\wt L^1(J; H^{r+5}\times H^{r+5})} + \|Q_a\|_{ \wt L^\infty(J; H^{r})}.
\end{align*}
Hence, we get that
\begin{align*}
\|( |D_x| K[v],\p_z K[v])\|_{X^r(J) \times X^r(J)} &\lesssim \| (Q_a, Q_b)\|_{\wt L^1(J; H^{r+5}\times H^{r+5})} + \|Q_a\|_{ \wt L^\infty(J; H^{r})} \\
&\lesssim  \|Q_a\|_{X^r(J)} +  \| Q_b\|_{\wt L^1(J;  H^{r+5})}.
\end{align*}

To estimate $Q_a[v]$, we need to consider the term
\begin{align*}
    I =&  \frac{|\partial_xH|^2-|D_x|H}{1+|D_x|H}\partial_zv = I_1 + I_2+ I_3 \\
    =& |\partial_xH|^2 \partial_zv - |\partial_xH|^2F(|D_x|H) \partial_zv  - F(|D_x|H)\partial_zv, \quad F(x) = \frac{x}{1+x}.
\end{align*}
We estimate $I$ using \eqref{HsLinftyEst}, Moser estimate \eqref{MoserOne}, and the fact that $|F|\leq 1$
\begin{align*}
 \|&I_1\|_{X^r(J)} \lesssim \|\p_x H \|^2_{L^\infty(J; L^\infty)} \|\p_z v\|_{X^r(J)} + \| \p_x H \|_{X^r(J)} \| \p_x H \|_{L^\infty(J; L^\infty)}\|\p_z v\|_{L^\infty(J; L^\infty)} \\
 &\lesssim \|\eta_x\|^2_{L^\infty} \|\p_z v\|_{X^r(J)} + \|\eta_x\|_{L^\infty} \|\eta\|_{H^{r+4}} \|\p_z v\|_{L^\infty(J; L^\infty)}, \\
 \|&I_2\|_{X^r(J)} \lesssim  \|\p_x H \|^2_{L^\infty(J; L^\infty)} \|\p_z v\|_{X^r(J)} + \| \p_x H \|_{X^r(J)} \| \p_x H \|_{L^\infty(J; L^\infty)}\|\p_z v\|_{L^\infty(J; L^\infty)} \\
 &+ \|\p_x H \|^2_{L^\infty(J; L^\infty)} \|\p_z v\|_{L^\infty(J; L^\infty)}  \|F(|D_x|H) \|_{X^r(J)}\\
 &\lesssim \|\eta_x\|^2_{L^\infty} \|\p_z v\|_{X^r(J)} + \|\eta_x\|_{L^\infty} \|\eta\|_{H^{r+4}} \|\p_z v\|_{L^\infty(J; L^\infty)} + \|\eta_x\|^2_{L^\infty} \|\p_z v\|_{L^\infty(J; L^\infty)} \|\eta\|_{H^{r+5}}, \\
 \|&I_3\|_{X^r(J)} \lesssim \|F(|D_x|H) \|_{L^\infty(J; L^\infty)} \|\p_z v\|_{X^r(J)} + \|F(|D_x|H) \|_{X^r(J)} \|\p_z v\|_{L^\infty(J; L^\infty)} \\
 &\lesssim \|\eta\|_{W^{1+\epsilon,\infty}}  \|\p_z v\|_{X^r(J)} + \| \eta\|_{H^{r+5}}   \|\p_z v\|_{L^\infty(J; L^\infty)}. 
\end{align*}
Other terms in $Q_a$ and $Q_b$ are estimated similarly, so that since $\|\eta\|_{W^{1,\infty}}< 1$,  
\begin{align*}
 \|( |D_x| K[v],\p_z K[v])\|_{X^r(J) \times X^r(J)} \lesssim&\|Q_a\|_{X^r(J)} +  \| Q_b\|_{\wt L^1(J;  H^{r+5})} \\
 \lesssim& \|\eta\|_{W^{1+\epsilon,\infty}} \|\nabla_{x,z} v\|_{X^r(J)} + \| \eta\|_{H^{r+5}}   \|\nabla_{x,z}  v\|_{L^\infty(J; L^\infty)}.
\end{align*}
We define the space
\begin{align*}
 X_*(J)& = L^\infty(J; L^\infty) \cap \wt L^1(J;  W^{1,\infty}), \quad  \mathcal{V}_*=\{v\in \mathcal{D}'(\R\times J): (|D_x|v, \p_zv)\in X_*(J)\}/\R, \\
  \mathcal{V}^r&=\{v\in \mathcal{D}'(\R\times J): (|D_x|v, \p_zv)\in   X^r(J)\}/\R.
\end{align*}
Then one has the estimate 
\begin{equation*}
 \|K[v] \|_{ \mathcal{V}^r} \lesssim \|\eta\|_{W^{1+\epsilon,\infty}} \| v\|_{\mathcal{V}^r(J)} + \| \eta\|_{H^{r+5}}   \|  v\|_{\mathcal{V}_*(J)},
\end{equation*}
and similarly, one can get the estimate
\begin{equation*}
 \|K[v] \|_{ \mathcal{V}_*} \lesssim \|\eta\|_{W^{1+\epsilon,\infty}} \| v\|_{\mathcal{V}_*(J)}.
\end{equation*}

Hence, according to Lemma \ref{t:fp}, using $E_1 = \mathcal{V}_*$ and $E_2 = \mathcal{V}^r$, by choosing $c_1$ small enough, $\mathcal{T}$ has a unique fixed point $v \in \mathcal{V}_* \cap  \mathcal{V}^r$.
Since $v- K[v] = e^{z|D_x|}f$, we have
\begin{align*}
   &\|\nabla_{x,z} v\|_{\wt L^\infty (J; L^\infty)}\lesssim \|\nabla_{x,z}e^{z|D_x|}f\|_{_{\wt L^\infty (J; L^\infty)}} \lesssim \|f\|_{W^{1,\infty}}, \\
   &\|\nabla_{x,z} v\|_{\wt L^\infty (J; H^r)}\lesssim \|\nabla_{x,z}e^{z|D_x|}f\|_{_{\wt L^\infty (J; H^r)}} \lesssim \|f\|_{H^{r+1}}.
\end{align*}
Note that using the definition of $R^-(\eta) f$ in \eqref{def:R}, and the estimate \eqref{SpaceTimeTwo},
\begin{align*}
 &\|R^-(\eta)f\|_{H^r} \lesssim \|(Q_a[v], Q_b[v]) \|_{\wt L^\infty (J; H^{r})}\\
 \lesssim &\|\eta\|_{W^{1+\epsilon,\infty}} \|\nabla_{x,z} v\|_{\wt L^\infty (J; H^{r})} + \|\eta\|_{H^{r+1}}  \|\nabla_{x,z} v\|_{\wt L^\infty (J; L^\infty)}\\
 \lesssim &\|\eta\|_{W^{1+\epsilon,\infty}} \|f\|_{ H^{r+1}} + \|\eta\|_{H^{r+1}}  \|f\|_{W^{1,\infty}}.
\end{align*}
This gives the estimate for the remainder term of the Dirichlet-Neumann operator.
\end{proof}

\textbf{Acknowledgments.}
Jiaqi Yang is supported by National Natural Science Foundation of China under Grant: 12471225.

\bibliography{HWW}
\bibliographystyle{plain}
	
\end{document}